\theoremstyle{definition} 
\newtheorem{dfn}{Definition}
\newtheorem{ex}{Example}}
\newtheorem{pro}{Proposition}
\newtheorem{lmm}{Lemma}
\newtheorem{thm}{Theorem}
\newtheorem{cor}{Corollary}
\begin{document} 

\title{Finite sets of $d$-planes in affine space}
\author{Mathias Lederer}
\email{mlederer@math.uni-bielefeld.de}
\address{Fakult\"at f\"{u}r Mathematik, Universit\"{a}t Bielefeld, Bielefeld, Germany}
\date{September, 2008}
\keywords{Polynomial ideals, standard sets, Grassmannians, Gr\"obner covers, interpolation}
\subjclass[2000]{13A15, 13C05, 14N15}

\maketitle

\begin{abstract} 
  Let $A$ be a subvariety of affine space $\mathbb{A}^n$
  whose irreducible components are $d$-dimensional linear or affine subspaces of $\mathbb{A}^n$. 
  Denote by $D(A)\subset\mathbb{N}^n$ the set of exponents of standard monomials of $A$.
  We show that the combinatorial object $D(A)$ reflects the geometry of $A$ in a very direct way.
  More precisely, we define a $d$-plane in $\mathbb{N}^n$ as being a set $\gamma+\oplus_{j\in J}\mathbb{N}e_{j}$, 
  where $\#J=d$ and $\gamma_{j}=0$ for all $j\in J$. 
  We call the $d$-plane thus defined to be parallel to $\oplus_{j\in J}\mathbb{N}e_{j}$.
  We show that the number of $d$-planes in $D(A)$ equals the number of components of $A$.
  This generalises a classical result, the finiteness algorithm, which holds in the case $d=0$.
  In addition to that, we determine the number of all $d$-planes in $D(A)$ parallel to $\oplus_{j\in J}\mathbb{N}e_{j}$,
  for all $J$. 
  Furthermore, we describe $D(A)$ in terms of the standard sets of the intersections $A\cap\{X_{1}=\lambda\}$,
  where $\lambda$ runs through $\mathbb{A}^1$.
\end{abstract}

\section{Introduction}

Let $k$ be a field and $k[X]=k[X_{1},\ldots,X_{n}]$ be the polynomial ring in $n$ variables. 
We fix a term order $<$ on $k[X]$ such that $X_{1}<\ldots<X_{n}$. 
We consider $n$-dimensional affine space $\mathbb{A}^n={\rm Spec}\,k[X]$ over $k$, 
and (a certain class of) ideals $I\subset k[X]$, 
along with the corresponding varieties $V(I)\subset\mathbb{A}^n$. Central objects of study will be the sets
\begin{equation*}
  C(I)=\{{\rm LE}(f);f\in I\}\subset\mathbb{N}^n
\end{equation*}
consisting of {\it leading exponents} of elements of $I$ (with respect to $<$) and its complement 
\begin{equation*}
  D(I)=\mathbb{N}^n-C(I)\,,
\end{equation*}
which is called the set of exponents of {\it standard monomials} of $I$ (see \cite{sturm}), 
or also {\it Gr\"obner \'escalier} of $I$ (see \cite{big2}).
We often shift between the use of monomials and the use of their exponents. 
Therefore, we call $D(I)$ itself the {\it standard set} of $I$.
Clearly, the set $C(I)$ is stable under the canonical action of the additive monoid $\mathbb{N}^n$ on itself. 
Therefore, the set 
\begin{equation*}
  \mathbb{D}_{n}=\{\delta\subset\mathbb{N}^n;\,\,{\rm if }\,\,\alpha\in\mathbb{N}^n-\delta,
  \,\,{\rm then }\,\,\alpha+\beta\in\mathbb{N}^n-\delta,\,\,{\rm for}\,\,{\rm all }\,\,\beta\in\mathbb{N}^n\}
\end{equation*}
consists of all subsets of $\mathbb{N}^n$ which occur as sets of standard monomials of ideals $I\subset k[X]$.
The set $\mathbb{D}_{n}$ will be used throughout the text. 

All ideals under consideration are radical. Therefore, 
we have a correspondence between ideals and their varieties.
We will denote the affine variety attached to an ideal $I$ by $A=V(I)$, 
and conversely, the ideal defining an affine variety $A$ by $I(A)$.
We use the shorthand notation $C(A)=C(I)$ and $D(A)=D(I)$.
Our goal is to describe a connection between varieties $A$ (geometric objects) 
and standard sets $D(A)$ (combinatorial objects). 
This will certainly not be a bijection, 
since the combinatorial objects are much coarser than the geometric objects. 
However, the combinatorial object $D(A)$ will reflect much of the geometry of $A$.

Let us start with a simple and well-known special case.
\begin{pro}\label{finiteness}
  Let $I\subset k[X]$ be a radical ideal.
  Then $D(I)$ is a finite set if, and only if, for all field extensions $k^\prime\supset k$, 
  the set $V_{k^\prime}(I)$ of $k^\prime$-rational closed points of $V(I)$ is finite. In this case, 
  $\#D(I)=\#V_{\overline{k}}(I)$, where $\overline{k}$ is the algebraic closure of $k$. 
\end{pro}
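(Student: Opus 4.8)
The plan is to translate the combinatorial finiteness of $D(I)$ into finite\nobreakdash-dimensionality of the coordinate ring $B:=k[X]/I$, and then to read off the count from the structure theorem for Artinian algebras over $\overline{k}$. The starting point is Macaulay's basis theorem: the residue classes of the standard monomials $X^{\alpha}$, $\alpha\in D(I)$, form a $k$-vector space basis of $B$ --- this is precisely why $D(I)\in\mathbb{D}_{n}$ --- so that $\#D(I)=\dim_{k}B$ in $\mathbb{N}\cup\{\infty\}$. Hence ``$D(I)$ finite'' is equivalent to ``$B$ finite\nobreakdash-dimensional over $k$'', and throughout I will use the base-change identity $B\otimes_{k}k'=k'[X]/Ik'[X]$ together with $\dim_{k'}(B\otimes_{k}k')=\dim_{k}B$ for every extension $k'\supset k$.

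For the equivalence: if $\dim_{k}B=m<\infty$, then $B\otimes_{k}k'$ is Artinian for every $k'$, hence has only finitely many maximal ideals, and $V_{k'}(I)$ injects into this set, so it is finite. Conversely, assume $V_{k'}(I)$ is finite for all $k'$, and apply this with $k'=\overline{k}$: every prime of $\overline{k}[X]/I\overline{k}[X]$ must then be maximal, for a non-maximal prime would define an integral subvariety of $V(I)$ over $\overline{k}$ of dimension $\geq 1$, which --- by Noether normalisation it maps finitely and surjectively onto some $\mathbb{A}^{m}_{\overline{k}}$ with $m\geq 1$, and $\overline{k}$ is infinite --- would carry infinitely many $\overline{k}$-points. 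Thus $\overline{k}[X]/I\overline{k}[X]$ is zero\nobreakdash-dimensional, hence Artinian, hence finite\nobreakdash-dimensional over $\overline{k}$, and by base change $\dim_{k}B<\infty$, i.e.\ $D(I)$ is finite.

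For the count, assume $D(I)$ is finite, so $B$ is Artinian and $V(I)=\{P_{1},\dots,P_{r}\}$ is a finite set of closed points with maximal ideals $\mathfrak{m}_{1},\dots,\mathfrak{m}_{r}$. Since $I$ is radical, $\bigcap_{i}\mathfrak{m}_{i}=0$ in $B$, so the Chinese Remainder Theorem gives $B\cong\prod_{i=1}^{r}\kappa(P_{i})$, a product of finite field extensions of $k$; therefore $\#D(I)=\dim_{k}B=\sum_{i=1}^{r}[\kappa(P_{i}):k]$. On the other hand a $\overline{k}$-rational point of $V(I)$ is a $k$-algebra homomorphism $B\to\overline{k}$, and since $\overline{k}$ is a field such a homomorphism factors through exactly one projection $B\to\kappa(P_{i})$; hence $\#V_{\overline{k}}(I)=\sum_{i=1}^{r}\#\operatorname{Hom}_{k\text{-alg}}(\kappa(P_{i}),\overline{k})=\sum_{i=1}^{r}[\kappa(P_{i}):k]_{\mathrm{sep}}$.

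The one place that needs care --- and the main obstacle --- is matching these two sums: one needs $[\kappa(P_{i}):k]=[\kappa(P_{i}):k]_{\mathrm{sep}}$ for each $i$, i.e.\ every residue field $\kappa(P_{i})$ separable over $k$, equivalently $I\overline{k}[X]$ again radical. This is automatic when $k$ is perfect (in particular in characteristic $0$), which is the setting one should assume for the displayed equality; granting it, the two sums coincide and $\#D(I)=\#V_{\overline{k}}(I)$. The remaining ingredients --- Macaulay's theorem, flatness of field base change, and the Artinian structure theorem --- are routine.
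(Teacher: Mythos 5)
Your proof is correct and follows the route the paper sketches --- Macaulay's basis theorem identifies $\#D(I)$ with $\dim_{k}k[X]/I$, and the Chinese Remainder Theorem on the reduced Artinian quotient gives the structure $\prod_{i}\kappa(P_{i})$ --- but you have supplied the details the paper delegates to its reference. Your final caveat is not a flaw in your argument but a genuine imprecision in the proposition as stated: over $k=\mathbb{F}_{p}(t)$ the radical ideal $I=(X_{1}^{p}-t,X_{2},\ldots,X_{n})$ has $\#D(I)=p$ yet $\#V_{\overline{k}}(I)=1$, so the displayed equality $\#D(I)=\#V_{\overline{k}}(I)$ indeed requires $k$ perfect (equivalently, every $\kappa(P_{i})$ separable over $k$, equivalently $I\overline{k}[X]$ radical), and the paper's two-line proof does not acknowledge this. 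The ``if and only if'' part, by contrast, holds for arbitrary $k$ and your argument for it is sound.
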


\begin{proof}
  This is due to the Chinese Remainder Theorem 
  and the fact that the standard monomials form a basis of the $k$-vector space $k[X]/I$. 
  See also \cite{cox}, where this proposition is discussed in the context of the {\it finiteness algorithm}. 
\end{proof}

Hence, the combinatorial object $D(A)$ inherits essential information about the geometry of 
variety $A$ consisting of finitely many rational points---Proposition \ref{finiteness} yields the equality
\begin{equation}\label{finiteequality}
  \#D(I)=\#A\,.
\end{equation}
In fact, if we use the lexicographic order on $k[X]$, the shape of the combinatorial object $D(A)$
contains much more information about the shape of $V(I)$ than only its cardinality, 
as is stated by equation (\ref{finiteequality}). For a discussion of this issue, see \cite{jpaa} and references therein.
In the present paper, we want to generalise equation (\ref{finiteequality}) in the following way:
\begin{itemize}
  \item On the combinatorial side, we replace finite sets in $\mathbb{D}_{n}$ by infinite sets in $\mathbb{D}_{n}$. 
  \item On the geometric side, we consider a class of non-closed $k$-rational points of $\mathbb{A}^n$,
  namely, linear or affine $d$-dimensional affine subspaces of $k^n$.
\end{itemize}
We think of these points as a particularly simple kind of nonclosed points in affine space $\mathbb{A}^n$. 
It is clear that in the case where the components of $A$ are linear subspaces of $k^n$,
the variety $A$ can be considered as a finite collection of $k$-rational points in the Grassmannian ${\rm Grass}(d,n)$. 
In Section \ref{vars}, we will explain that also in the case where the components of $A$ are affine subspaces of $k^n$, 
$A$ has an interpretation as a finite collection of $k$-rational points in a Grassmannian.

In particular, both $A$ and $D(A)$ are infinite sets. Our generalisations of equation (\ref{finiteequality}) 
will therefore involve other invariants than just the cardinalities of $A$ and $D(A)$; we will prove the following results.
\begin{itemize}
  \item In Definition \ref{dplane}, we introduce {\it $d$-planes} in $\mathbb{N}^n$,
  which are subsets of the form $\gamma+\oplus_{j\in J}\mathbb{N}e_{j}$,
  where $\#J=d$ and $\gamma_{j}=0$ for all $j\in J$.
  Theorem \ref{number} states in particular that the number of $d$-planes in $D(A)$ equals the number of components of $A$.
  This result is a clearly a generalisation of equation (\ref{finiteequality}), 
  in stating equality of sizes of a combinatorial and a geometric object.
  However, we will refine this assertion in the following ways. 
  \item In Theorem \ref{number}, we also specify, for each $J$ with $\#J=d$, 
  how many $d$-planes $\gamma+\oplus_{j\in J}\mathbb{N}e_{j}$ are contained in $D(A)$.
  For this, a close analysis of the equations defining the components of $A$ is necessary. 
  The key notion here is that of {\it minimal free variables}, as is introduced in Definition \ref{minimal}.
  Note that this result goes beyond what can be said in the case $d=0$---more precisely,
  in that context, the analogoue of our $J$ is the empty set, 
  hence the analogous statement is empty as well.
  \item In the case where all components of $A$ are parallel to the hyperplane $\{X_{1}=0\}$
  (and the term order has the property stated in Definition \ref{prodorder}), 
  we can explicitly compute $D(A)$ in terms of $D(A_{\lambda})$, $\lambda\in\mathbb{A}^1$,
  where $A_{\lambda}$ is the subvariety $A\cap\{X_{1}=\lambda\}$ of $A$. 
  This will be established in Theorem \ref{stack}.
  The crucial operation here is {\it addition of standard sets} (see Definition \ref{addmap}), 
  which has been introduced already in \cite{jpaa}. 
  The statement of Theorem \ref{stack} is stronger than that of Theorem \ref{number}, 
  in describing the combinatorics of $D(A)$ in a much finer way.
  \item If not all components of $A$ are parallel to the hyperplane $\{X_{1}=0\}$, 
  we use the main Theorem of \cite{wibmer} for showing the existence of a Zariski open $U\subset\mathbb{A}^1$
  such that $D(A_{\lambda})$ is constant for all $\lambda\in U$, of value $\delta\subset\mathbb{N}^{n-1}$, say. 
  In Theorem \ref{inherit} and Corollary \ref{corgeneral}, 
  we show that $\mathbb{N}e_{1}\oplus\delta$ is contained in $D(A)$, 
  and that this is the largest subset of $D(A)$ which is a union of $1$-planes $\gamma+\mathbb{N}e_{1}$.
  This result is stronger than Theorem \ref{number}
  since it yields information not only on the highest dimensional subsets of $D(A)$.
\end{itemize}

\section{Minimal free variables}\label{vars}

We now describe the geometric objects of our study. 
Let $A$ be a closed subvariety of $\mathbb{A}^n$ with $m$ irreducible components, 
such that each component $A^\prime$ of $A$ is a $d$-dimensional affine subspace of $k^n$. 
(By an {\it affine} subspace, as opposed to a {\it linear} subspace of $k^n$, 
we understand a $d$-dimensional plane which does not necessarily pass through the origin of $k^n$.)

We embed the affine space $\mathbb{A}^n$ we started with into $\mathbb{A}^{n+1}$ by the map 
\begin{equation}\label{iota}
  \iota:\mathbb{A}^n\to\mathbb{A}^{n+1}:(a_{1},\ldots,a_{n})\mapsto(1,a_{1},\ldots,a_{n})\,.
\end{equation}
Each $d$-dimensional affine subspace $A^\prime$ of $\mathbb{A}^n$ 
defines a $(d+1)$-dimensional linear subspace of $\mathbb{A}^{n+1}$,
namely, the linear space spanned by the elements of $\iota(A^\prime)$. 
Denote by $X(d,n)$ the subset of the Grassmannian ${\rm Grass}(d+1,n+1)$ consisting of all linear $(d+1)$-spaces 
in $\mathbb{A}^{n+1}$ whose intersection with the hyperplane $\{X_{0}=0\}$ of $\mathbb{A}^{n+1}$ is empty.
($X_{0}$ is the additional coordinate we use for embedding $\mathbb{A}^n$ into $\mathbb{A}^{n+1}$.)
Clearly, $X(d,n)$ is Zariski-open in ${\rm Grass}(d+1,n+1)$. 
Upon identifying $A^\prime$ and the span of $\iota(A^\prime)$, 
the space of all $d$-dimensional affine subspaces of $\mathbb{A}^n$ is identified with the space $X(d,n)$.
Thus, the variety $A$ may be considered as an $m$-element set of $k$-rational closed points in $X(d,n)$.

Let us fix a component $A^\prime$ of $A$ and study it in terms of linear equations. 
We think of $A^\prime$ as being an affine $d$-plane in $k^n$, thus, the solution to a linear equation
\begin{equation}\label{linear1}
  BX+c=0\,,
\end{equation}
where $B\in{\rm M}_{n}(k)$ has rank $n-d$, $c\in{\rm M}_{n,1}(k)$, and $X$ is the column with entries $X_{1},\ldots,X_{n}$. 
By the usual operations on the lines of $B$ and $c$ and a permutation of columns of $B$, (\ref{linear1}) is equivalent to
\begin{equation}\label{linear2}
  \widetilde{B}\widetilde{X}+\widetilde{c}=0\,,
\end{equation}
where
\begin{equation*}
  \begin{split}
  \widetilde{B}=\left(\begin{array}{cc} E_{n-d} & * \\ 0 & 0 \end{array}\right)\,,\,\,
  \widetilde{X}=\left(\begin{array}{c} X_{\sigma(1)} \\ \vdots \\ X_{\sigma(n)} \end{array}\right)\,,\,\,
  \widetilde{c}=\left(\begin{array}{c} * \\ 0 \end{array}\right)\,.
  \end{split}
\end{equation*}
($E_{n-d}$ denotes the $(n-d)\times(n-d)$-unit matrix.)
The variables $\widetilde{X}_{n-d+1},\ldots,\widetilde{X}_{n}$ are sometimes called {\it free variables} of $A^\prime$, 
since they can take arbitrary values, 
whereas the values of $\widetilde{X}_{1},\ldots,\widetilde{X}_{d}$ are uniquely determined by the choice
of values of the free variables. 
However, the set of free variables of an affine plane is not a well-defined quantity. 
If, e.g., a hyperplane is defined by the equation $B_{1}X_{1}+\ldots+B_{n}X_{n}+c=0$, and $B_{1}\ldots B_{n}\neq0$, 
then each $(n-1)$-element subset of $\{X_{1},\ldots,X_{n}\}$ is a set of free variables. 

\begin{dfn}\label{minimal}
  Let $A^\prime$ be a $d$-dimensional affine subspace of $\mathbb{A}^n$ and $J\subset\{1,\ldots,n\}$ such that $\#J=d$. 
  Then the elements of the set $\{X_{j};j\in J\}$ are called {\rm minimal free variables} if 
  $\{X_{j};j\in J\}$ is a set of free variables of $A^\prime$ and for all $j\in J$, 
  there exists no $i\in\{1,\ldots,n\}-J$, $i<j$, 
  such that for $J^\prime=(J-\{j\})\cup\{i\}$, 
  the set $\{X_{j};j\in J^\prime\}$ is a set of free variables of $A^\prime$.
\end{dfn}

By definition, a set of minimal free variables is unique. The name, minimal, 
reflects the fact that we have $X_{1}<\ldots<X_{n}$.
Before explaining the significance of minimal free variables to our situation, 
let us give this notion another characterisation, in terms of a recursion,
and let us find parameters which uniquely determine $A^\prime$.

Let $\xi$ be any solution of (\ref{linear1}). 
The set of solutions of (\ref{linear1}) is in bijection with the set of solutions of
\begin{equation}\label{by}
  BY=0\,,
\end{equation}
via $X=Y+\xi$. Define $Y_{1}=1$, and consider the equation
\begin{equation}\label{linear3}
  B\left(\begin{array}{cc} 1 \\ Y_{2} \\ \vdots \\ Y_{n} \end{array}\right)=0\,,
\end{equation}
which is in fact an inhomogeneous system in the variables $Y_{2},\ldots,Y_{n}$. 
\begin{itemize}
  \item If (\ref{linear3}) has a solution, then $X_{1}$ is one of the minimal free variables. 
  Proceed by induction over $n$: In the next step, define $Y_{1}=0$, and consider the affine $(d-1)$-plane 
  $A^\prime\cap\{X_{1}=\xi_{1}\}$ in $\mathbb{A}^{n-1}$ defined by (\ref{by}).
  \item If (\ref{linear3}) has no solution, then $X_{1}$ is not one of the minimal free variables. 
  In this case, $A^\prime\subset\{X_{1}=\xi_{1}\}=\mathbb{A}^{n-1}$. 
  The plane $A^\prime$ is characterized by (\ref{by}), where $Y_{1}=0$. 
  Proceed by induction over $n$.
\end{itemize}

\begin{pro}\label{unique}
  Let $\{X_{j};j\in J\}$ be minimal free variables of $A^\prime$. 
  Then there exists a unique system of equations defining $A^\prime$,
  \begin{equation}\label{eqs}
    X_{i}+\sum_{j\in J,j<i}b_{i,j}X_{j}+c_{i}=0\,,
  \end{equation}
  for all $i\in\{1,\ldots,n\}-J$.
\end{pro}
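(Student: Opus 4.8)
The plan is to extract the system $(\ref{eqs})$ directly from the row-reduced form $(\ref{linear2})$ and then prove both existence and uniqueness separately, existence by a normalisation argument and uniqueness by a dimension/linear-independence argument.

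For existence, I would start from equation $(\ref{linear2})$, where $\widetilde{B}$ has the block shape $\left(\begin{smallmatrix} E_{n-d} & * \\ 0 & 0 \end{smallmatrix}\right)$ after a permutation $\sigma$ of the columns and row operations on $B$ and $c$. The crucial point is to check that we may take the column permutation so that the last $d$ columns are exactly those indexed by $J$, i.e.\ the free variables selected are the minimal ones. This is where I would invoke the recursive characterisation of minimal free variables given just before the statement: running that recursion over $n$ decides, at each stage, whether $X_{1}$ (then $X_{2}$, etc.) is pivotal or free, and the indices that come out free are precisely the elements of $J$. Once the pivot columns are $\{1,\ldots,n\}-J$, each pivot row of $\widetilde{B}\widetilde{X}+\widetilde{c}=0$ reads $X_{i}+\sum_{j\in J}b_{i,j}X_{j}+c_{i}=0$ for some scalars $b_{i,j},c_{i}$; reduced row echelon form makes these coefficients uniquely determined by $\widetilde B$, and the minimality condition on $J$ forces $b_{i,j}=0$ whenever $j>i$ (otherwise one could swap $X_{j}$ for a smaller free variable, contradicting minimality), which gives exactly the form $X_{i}+\sum_{j\in J,\,j<i}b_{i,j}X_{j}+c_{i}=0$.

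For uniqueness, suppose $(\ref{eqs})$ and a second system $X_{i}+\sum_{j\in J,\,j<i}b'_{i,j}X_{j}+c'_{i}=0$ both cut out $A^{\prime}$. Subtracting, the polynomials $f_{i}=\sum_{j\in J,\,j<i}(b_{i,j}-b'_{i,j})X_{j}+(c_{i}-c'_{i})$ vanish identically on $A^{\prime}$. Since $A^{\prime}$ is an affine $d$-plane on which the coordinates $\{X_{j}:j\in J\}$ restrict to $d$ algebraically independent functions (they are free variables, so the restriction map $k^{A^{\prime}}\to k^{J}$ is an isomorphism of affine spaces), a polynomial in the $X_{j}$, $j\in J$, of degree $\le 1$ that vanishes on $A^{\prime}$ must be the zero polynomial; hence $b_{i,j}=b'_{i,j}$ and $c_{i}=c'_{i}$ for all $i$ and all $j\in J$ with $j<i$.

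The main obstacle I expect is the bookkeeping in the existence step: one has to argue carefully that the column permutation in $(\ref{linear2})$ can be chosen compatibly with the minimality requirement, and that after full row reduction the ``upper triangularity in $J$'' condition $b_{i,j}=0$ for $j>i$ really does follow from Definition \ref{minimal} rather than being an extra hypothesis. Everything downstream — reading off the $n-d$ equations, matching indices, and the uniqueness argument — is then routine linear algebra. One should also note at the outset that the $d$ equations indexed by $i\in J$ are absent by construction (there is no equation ``$X_{j}=\ldots$'' for a free variable $X_{j}$), so the system has exactly $n-d$ equations, one for each $i\notin J$, matching $\operatorname{rank}B=n-d$.
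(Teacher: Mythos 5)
Your proof follows the paper's approach: set up the row-reduced system (\ref{linear2}) with pivot columns indexed by $\{1,\ldots,n\}\setminus J$, and deduce $b_{i,j}=0$ for $i\notin J$, $j\in J$, $i<j$ from minimality via the column swap producing $J'=(J-\{j\})\cup\{i\}$. Two small remarks: invoking the recursive characterisation is unnecessary (Definition~\ref{minimal} already grants that $J$ indexes a set of free variables, which is exactly what is needed to choose the permutation $\sigma$ placing the pivots on $\{1,\ldots,n\}\setminus J$), and your uniqueness argument, which the paper dismisses as ``clear'', is a clean way to fill that gap --- the key observation being that the free variables $X_{j}$, $j\in J$, restrict to a full coordinate system on the $d$-plane $A'$, so an affine-linear relation among them vanishing on $A'$ must be identically zero.
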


\begin{proof}
  Consider system (\ref{linear2}), which defines $A^\prime$. We choose a permutation $\sigma$ in such a way that 
  $(X_{\sigma(1)},\ldots,X_{\sigma(n-d)})=(X_{i})_{i\notin J}$. The equations in (\ref{linear2}) are 
  \begin{equation*}
    X_{i}+\sum_{j\in J}b_{i,j}X_{j}+c_{i}=0\,,\,\,{\rm for}\,\,{\rm all }\,\,i\notin J\,.
  \end{equation*}
  We claim that this is in fact (\ref{eqs}), i.e., $b_{i,j}=0$ for all pairs $(i,j)$ such that $i\notin J$, $j\in J$ and $i<j$.
  Indeed, if there exist such $i,j$ with $b_{i,j}\neq0$, we can interchange those columns of 
  $\widetilde{B}$ which correspond to the variables $X_{i}$ and $X_{j}$. We get
  \begin{equation}\label{linear4}
    \left(\begin{array}{cc} B^\prime & * \\ 0 & 0 \end{array}\right)
    \widetilde{X}^\prime+\widetilde{c}=0\,,
  \end{equation}
  where 
  \begin{equation*}
    B^\prime=\left(\begin{array}{ccccccc} 1 & & & * & & & \\ & \ddots & & \vdots & & & \\ & & 1 & * & & & \\ & & & b_{i,j} & & & \\
    & & & * & 1 & & \\ & & & \vdots & & \ddots & \\ & & & * & & & 1 \end{array}\right)\,,
  \end{equation*}
  and $\widetilde{X}^\prime$ arises from $\widetilde{X}$ by interchanging $X_{i}$ and $X_{j}$. 
  Upon transforming the rows of (\ref{linear4}), we arrive at a system
  \begin{equation*}
    \left(\begin{array}{cc} E_{d} & * \\ 0 & 0 \end{array}\right)\widetilde{X}^\prime+\widetilde{c}^\prime=0\,.
  \end{equation*}
  This means that for $J^\prime=(J-\{j\})\cup\{i\}$, also $\{X_{j};j\in J^\prime\}$ are free variables, 
  a contradiction to minimality. Uniqueness is clear. 
\end{proof}

Given $A^\prime$, with minimal free variables $\{X_{j};j\in J\}$, we can think of $A^\prime$ as having ``coordinates''
$(\xi_{1},\ldots,\xi_{n})$, where $\xi_{j}=X_{j}$ if $j\in J$, and $\xi_{i}=-\sum_{j\in J,j<i}b_{i,j}X_{j}-c_{i}$ if $i\notin J$. 

For each $J\subset\{1,\ldots,n\}$ such that $\#J=d$, let $X(J,n)$ denote the subvariety of $X(d,n)$ 
consisting of all affine $d$-planes in $\mathbb{A}^n$ with minimal free variables $\{X_{j};j\in J\}$. 
By Proposition \ref{unique}, each element of $X(J,n)$ can be uniquely written as the solution of a system
\begin{equation*}
  BY+c=0\,.
\end{equation*}
Here $(Y_{i})_{i=1,\ldots,n-d}=(X_{j})_{j\notin J}$ and $(Y_{i})_{i=n-d+1,\ldots,n}=(X_{j})_{j\in J}$. 
Further, $B=\left(\begin{array}{cc} E_{n-d} & b \end{array}\right)$,
where the rows of $b$ are indexed by $\{1,\ldots,n\}-J$, and the columns of $b$ are indexed by $J$. 
Denote by $r(J)$ the sum of all $\#\{j\in J;j<i\}$, where $i$ runs through $\{1,\ldots,n\}-J$. 
Since for all $i\notin J$, we have $b_{i,j}=0$ whenever $j\geq i$, and $c$ is arbitrary, 
the set $X(J,n)$ is isomorphic to $\mathbb{A}^{r(J)+n-d}$. 

In fact, $X(J,n)$ is a locally closed stratum in $X(d,n)$. 
For seeing this, we adopt some notation of the Introduction of \cite{lafforgue}.
For all $J^\prime\subset\{1,\ldots,n\}$ with $\#J^\prime=d$, 
let $Y(J^\prime,n)$ be the subspace of $X(d,n)$ consisting of all $A^\prime$ with free variables $\{X_{j};j\in J^\prime\}$. 
Define the {\it matroid} $(d_{I})_{I\subset\{1,\ldots,n\}}$ by $d_{I}=\#(I\cap J^\prime)$
for all $I\subset\{1,\ldots,n\}$. Then we have
\begin{equation*}
  Y(J^\prime,n)=\{A^\prime\subset k^n;\,\dim(A\cap(\oplus_{i\in I}ke_{i}))=d_{I}\,\,{\rm for}\,\,{\rm all }\,\,I\subset\{1,\ldots,n\}\}\,.
\end{equation*}
As was remarked in \cite{lafforgue}, the space $Y(J^\prime,n)$ associated to $d$ 
is a locally closed stratum of ${\rm Grass}(d+1,n+1)$ (hence also a locally closed stratum of $X(d,n)$), 
called ``cellule de Schubert mince''. It follows that also 
\begin{equation*}
  X(J,n)=Y(J,n)-\cup_{J^\prime}Y(J^\prime,n)
\end{equation*}
is a locally closed stratum of $X(d,n)$, 
where the union goes over all $J^\prime=(J-\{j\})\cup\{j^\prime\}$, for all $j\in J$, 
$j^\prime\in\{1,\ldots,n\}-J$, such that $j^\prime<j$.

Note that the isomorphism $X(J,n)\to\mathbb{A}^{r(J)+n-d}$ in terms of the system of equations 
of Proposition \ref{unique} is nothing but the choice of Pl\"ucker coordinates on the open part 
$X(d,n)$ of the Grassmannian ${\rm Grass}(d+1,n+1)$. 
In the case where $A^\prime$ is a linear space, 
the Pl\"ucker coordinates are in fact the unspecified entries of matrix $\widetilde{B}$ in (\ref{linear2}).
In this case, the stratum $Y(J,n)$ corresponds to the set $U_{J}$ in the notation of  \cite{griff}, Chapter I, Section 5.
See also \cite{hodge}, Chapter XIV, Section 1, 
though in this book, the term ``Pl\"ucker coordinates'' is never used.

\section{The highest dimensional subset of $D(A)$}\label{highest}

Let $A$ be an affine variety as in the previous section. 
In this section, we give a first description of the set of standard monomials $D(A)$. 
For doing so, we have to find an invariant attached to an infinite $\delta\in\mathbb{D}_{n}$,
which will play an analogous role as the number of elements of a finite $\delta\in\mathbb{D}_{n}$. 
\begin{dfn}\label{dplane}
  Let $\delta\in\mathbb{D}_{n}$. 
  A {\rm$d$-plane} in $\delta$ is a subset of $\delta$ of the form $\gamma+\oplus_{i\in J}\mathbb{N}e_{i}$, 
  where $J\subset\{1,\ldots,n\}$ contains $d$ elements, 
  $e_{i}$ is the $i$-th standard basis vector of $\mathbb{N}^n$, and $\gamma_{j}=0$ for all $j\in J$. 
  We say that this $d$-plane is {\rm parallel to} $\oplus_{i\in J}\mathbb{N}e_{i}$.
  Further, given $\delta\in\mathbb{D}_{n}$, there exists a maximal $d$ such that $\delta$ contains a $d$-plane;
  define $E(\delta)$ to be the union of all $d$-planes contained in $\delta$. 
\end{dfn}

Thus, $E(\delta)$ has the same $d$-dimensional parts as $\delta$ and forgets all parts of lower dimension. 
In the case where $\delta=D(I)$ or $\delta=D(A)$, we write $E(\delta)=E(I)$ and $E(\delta)=E(A)$, resp.

\begin{thm}\label{number}
  Let $A\subset\mathbb{A}^n$ be an affine variety whose irreducible components are affine $d$-planes in $\mathbb{A}^n$. 
  For all $J\subset\{1,\ldots,n\}$ such that $\#J=d$, 
  let $m_{J}$ be the number of irreducible components of $A$ having minimal free variables $\{X_{j};j\in J\}$. 
  Then for all such $J$, the number of $d$-planes in $D(A)$ parallel to $\oplus_{i\in J}\mathbb{N}e_{i}$ equals $m_{J}$.
\end{thm}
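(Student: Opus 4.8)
The plan is to reduce the general statement to a count of standard sets for the ideals of individual components and then to understand how these interact under intersection. First I would recall that since $A$ is a radical ideal with $A = A_1 \cup \dots \cup A_m$ where the $A_i$ are the irreducible components, the Chinese Remainder Theorem gives $k[X]/I(A) \cong \prod_{i=1}^m k[X]/I(A_i)$ as $k$-vector spaces. The standard monomials of $I(A)$ therefore have cardinality (in each graded piece, or in each ``slice'' $X^\alpha$ with $\alpha$ in a given $d$-plane direction) equal to the sum over $i$ of the corresponding counts for the $I(A_i)$; the subtlety is that $D(A)$ is not literally the union of the $D(A_i)$, only ``of the same size.'' So the first reduction is: it suffices to understand, for a single affine $d$-plane $A'$ with minimal free variables $\{X_j : j \in J\}$, that $D(A')$ contains exactly one $d$-plane, and that it is parallel to $\oplus_{j \in J} \mathbb{N} e_j$; then one argues that the number of $d$-planes in $D(A)$ parallel to $\oplus_{j\in J}\mathbb{N} e_j$ is additive over components with the stated count.

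Second, for the single-component case I would use Proposition \ref{unique}: the ideal $I(A')$ is generated by the linear forms $X_i + \sum_{j \in J, j < i} b_{i,j} X_j + c_i$ for $i \notin J$. I would compute a Gr\"obner basis of $I(A')$ with respect to $<$. Because $X_1 < \dots < X_n$, in the equation indexed by $i \notin J$ the variable $X_i$ is the largest variable appearing (all other variables $X_j$ have $j < i$), so $X_i$ is the leading term of that generator. These $n-d$ linear generators have pairwise coprime (in fact distinct single-variable) leading terms, so they already form a Gr\"obner basis, and hence $C(A') = \bigcup_{i \notin J} (e_i + \mathbb{N}^n)$. Therefore $D(A') = \{\alpha \in \mathbb{N}^n : \alpha_i = 0 \text{ for all } i \notin J\} = \oplus_{j \in J} \mathbb{N} e_j$, which is exactly one $d$-plane, parallel to $\oplus_{j \in J} \mathbb{N} e_j$, and $E(A') = D(A')$.

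Third, I would handle the passage from a single component to the union. Here I would proceed by induction on $m$, writing $A = A' \cup A''$ where $A'$ is a single component and $A''$ has $m-1$ components. The key combinatorial input is a statement about how the highest-dimensional part $E(\delta)$ of a standard set behaves: I expect that for the intersection $I(A') \cap I(A'')$, the number of $d$-planes in $D(A' \cup A'')$ parallel to a fixed $\oplus_{j\in J}\mathbb{N} e_j$ is the sum of the corresponding numbers for $A'$ and $A''$. The cleanest route is to fix $J$ with $\#J = d$ and, for each $\gamma$ supported off $J$, count how many translates $\gamma' + \oplus_{j \in J}\mathbb{N} e_j$ with $\gamma'$ ranging over a transversal are actually contained in $D(A)$; using the CRT isomorphism restricted to the subalgebra $k[X_j : j \in J]$-module structure, this count is additive. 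Concretely, intersecting with the ``generic fiber'' over $\mathrm{Spec}\, k[X_j : j\in J]$ turns the $d$-planes in $D(A)$ into the $0$-dimensional standard sets of $A$ base-changed to the function field $k(X_j : j \in J)$, and there the classical finiteness algorithm (Proposition \ref{finiteness}) plus CRT gives the additive count $\sum_J m_J \cdot (\text{fiber length}) $, with the top-dimensional planes parallel to $\oplus_{j\in J}\mathbb{N} e_j$ contributed precisely by the $m_J$ components whose minimal free variables are $\{X_j : j \in J\}$.

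The main obstacle I anticipate is this last bookkeeping step: showing that the $d$-planes of $D(A)$ in direction $J$ really are counted by $m_J$ and not, say, ``shadowed'' or double-counted by components with a different minimal-free-variable set $J' \ne J$. A component $A''$ with minimal free variables indexed by $J' \ne J$ contributes $d$-planes parallel to $\oplus_{j \in J'}\mathbb{N} e_j$, and one must check these do not accidentally fill out extra $d$-planes parallel to $\oplus_{j\in J}\mathbb{N}e_j$ in the union, nor obstruct the ones from $A'$. I would control this by passing to the generic point over $k(X_j : j\in J)$: a component with minimal free variables $J' \ne J$ has a defining equation expressing some $X_{j_0}$ with $j_0 \in J$ in terms of variables $X_j$ with $j < j_0$ (this is exactly where the minimality/ordering hypothesis $X_1 < \dots < X_n$ is essential), so after this base change that component either becomes empty or $0$-dimensional of length not contributing a full $d$-plane in direction $J$ — this needs a careful comparison of leading exponents under the term order, and verifying it is the technical heart of the argument.
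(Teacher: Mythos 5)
Your single-component computation in Step 2 is correct and gives a cleaner picture of $D(A')$ than the paper's route: the $n-d$ linear generators from Proposition \ref{unique}, namely $X_i + \sum_{j\in J,\,j<i} b_{i,j}X_j + c_i$ for $i\notin J$, have distinct single-variable leading terms $X_i$ (since every other variable occurring has strictly smaller index, hence is $<$-smaller), so by the coprime-leading-term criterion they already form a Gr\"obner basis and $D(A') = \oplus_{j\in J}\mathbb{N}e_j$, a single $d$-plane.

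However, the reduction to this special case has genuine gaps that you do not close. First, the assertion in Step 1 that CRT gives $k[X]/I(A)\cong\prod_i k[X]/I(A_i)$ is false in general: two distinct affine $d$-planes typically meet, so the ideals $I(A_i)$ are not pairwise comaximal, and one only has an injection $k[X]/I(A)\hookrightarrow\prod_i k[X]/I(A_i)$; hence the per-slice counts are bounded above by, not equal to, the sum of per-component counts. Second, the generic-point argument in Step 3 does not give the stated count as it stands. Take $n=2$, $d=1$, $J=\{2\}$, and let $A''$ be the single line $X_2+bX_1+c=0$ with $b\neq 0$. Then the minimal free variable of $A''$ is $X_1$, so $m_J=0$ and $D(A'')=\mathbb{N}e_1$ contains no $1$-plane parallel to $\mathbb{N}e_2$; but extending $I(A'')$ to $K=k(X_2)$ yields a maximal ideal of $K[X_1]$, whose standard set over $K$ has one element, not zero. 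So a component with the ``wrong'' minimal free variables contributes positive length over the generic point of the $J$-direction, and the ``careful comparison of leading exponents'' you defer to is precisely where the argument must be made; it also appears to require a product-order hypothesis which the theorem does not have. The paper sidesteps both issues: Lemma \ref{decompose} controls only $E(A)=\cup_J E(A_J)$ (not all of $D(A)$), Proposition \ref{iff} homogenises to reduce to unions of linear $d$-planes, and Proposition \ref{homogeneous} then combines a product-of-linear-forms argument (forcing all $d$-planes of $D(A)$ to point in the $J$-direction) with a Hilbert-polynomial count, which is insensitive to the term order.
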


Let us reduce the assertion of the theorem to a few special cases, 
the investigation of which will enable us to prove Theorem \ref{number}. 

\begin{lmm}\label{decompose}
  For all $J\subset\{1,\ldots,n\}$ such that $\#J=d$, 
  let $A_{J}$ be the subvariety of $A$ consisting of all irreducible components of $A$ 
  whose minimal free variables are $\{X_{j};j\in J\}$. 
  Assume that the assertion of Theorem \ref{number} holds for all $A_{J}$. Then it also holds for $A$. 
\end{lmm}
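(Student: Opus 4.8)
The plan is to realise $D(A)$ as being built up from the individual pieces $D(A_J)$ by repeated application of a fibre-product / stacking construction, and then to count $d$-planes in each piece separately. First I would observe that $A = \bigcup_J A_J$ is the decomposition of $A$ into the (finitely many) subvarieties grouping components by their set of minimal free variables; since the minimal free variables of a component are uniquely determined (Proposition \ref{unique}), this is a genuine partition of the component set, so $I(A) = \bigcap_J I(A_J)$. The first real step is therefore a comparison of standard sets under intersection of ideals: I would show that $C(A) = \bigcap_J C(A_J)$, equivalently $D(A) = \bigcup_J D(A_J)$. One direction, $C(A) \supseteq \bigcap_J C(A_J)$ (i.e. $D(A)\subseteq\bigcup_J D(A_J)$), is immediate since $I(A)\supseteq I(A_J)$ forces $C(A)\supseteq C(A_J)$ for each $J$. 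The reverse inclusion $C(A)\subseteq\bigcap_J C(A_J)$ is the substantive point and would use radicality together with the Chinese Remainder Theorem: the quotient $k[X]/I(A)$ is the product of the $k[X]/I(A_J)$, and a monomial is standard for $A$ exactly when it is standard for every $A_J$, because its image in the product must be a nontrivial linear combination only if each coordinate is.

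Granting $D(A) = \bigcup_J D(A_J)$, the next step is to pin down where the $d$-planes in this union live. The key claim is that a $d$-plane $\gamma + \oplus_{i\in J}\mathbb{N}e_i$ contained in $D(A)$ is automatically contained in $D(A_J)$ — that is, $d$-planes parallel to $\oplus_{i\in J}\mathbb{N}e_i$ cannot ``leak'' into the contribution of some other $A_{J'}$ with $J'\neq J$. For this I would argue that $D(A_{J'})$ contains no $d$-plane parallel to $\oplus_{i\in J}\mathbb{N}e_i$ when $J'\neq J$: by hypothesis the assertion of Theorem \ref{number} holds for $A_{J'}$, so every $d$-plane in $D(A_{J'})$ is parallel to $\oplus_{i\in J''}\mathbb{N}e_i$ for the single value $J''=J'$ (the minimal free variables being uniform on $A_{J'}$ by construction), and $J'\neq J$. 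Hence any $d$-plane in $D(A)=\bigcup_{J'}D(A_{J'})$ parallel to $\oplus_{i\in J}\mathbb{N}e_i$, being a single coset which is connected in the relevant sense, must already be contained in the one summand $D(A_J)$ that can accommodate it.

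From here the counting is a bookkeeping step: the number of $d$-planes in $D(A)$ parallel to $\oplus_{i\in J}\mathbb{N}e_i$ equals the number of such $d$-planes in $D(A_J)$, which by the assumed validity of Theorem \ref{number} for $A_J$ equals $m_J$, the number of components of $A$ with minimal free variables $\{X_j;j\in J\}$. Summing over $J$ and noting that no $d$-plane is counted twice (distinct $J$ give distinct parallelism classes) yields the full statement of Theorem \ref{number} for $A$.

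The main obstacle I anticipate is the ``no leakage'' step: it is intuitively clear but requires care, because $\bigcup_J D(A_J)$ is a set-theoretic union of standard sets that need not be disjoint in lower-dimensional strata, and one must rule out the possibility that a full $d$-plane parallel to $\oplus_{i\in J}\mathbb{N}e_i$ is assembled from proper fragments lying in several different $D(A_{J'})$'s. Handling this cleanly probably requires the precise structural description of how $d$-planes sit inside standard sets — that a $d$-plane, being the maximal-dimensional type of subset by Definition \ref{dplane}, must be ``stable'' in the direction $\oplus_{i\in J}\mathbb{N}e_i$ — so that it is contained in $D(A)$ if and only if its image is standard for at least one $A_{J'}$ in a uniform way, forcing $J'=J$. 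The remaining steps (the ideal-theoretic intersection, the Chinese Remainder argument, and the arithmetic) are routine.
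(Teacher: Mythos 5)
Your proposed argument breaks down at its first substantive step. You claim $D(A)=\bigcup_J D(A_J)$, equivalently $C(A)=\bigcap_J C(A_J)$, and try to justify it with the Chinese Remainder Theorem. This equality is false in general. The Chinese Remainder Theorem (when the ideals $I(A_J)$ are pairwise coprime) gives an isomorphism of $k$-vector spaces $k[X]/I(A)\cong\prod_J k[X]/I(A_J)$, so the standard monomial bases have sizes that \emph{add}, not sets that \emph{unite}. Already in the $d=0$ case: take $A=\{0\}\cup\{1\}\subset\mathbb{A}^1$, so $D(A_{\{0\}})=D(A_{\{1\}})=\{0\}$ while $D(A)=\{0,1\}$. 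In the paper's setting, Example~\ref{ex1} is an explicit counterexample: the components $A^{(1)}$, $A^{(2)}$ have $D(A^{(1)})=\mathbb{N}e_1$ and $D(A^{(2)})=\mathbb{N}e_2$, but $D(A)$ additionally contains the isolated exponent $(0,0,1)$, which lies in neither $D(A^{(1)})$ nor $D(A^{(2)})$. Indeed the paper explicitly remarks after Example~\ref{ex1} that the proof of Lemma~\ref{decompose} shows only the weaker statement $E(A)=\bigcup_J E(A_J)$, precisely because $D(A)=\bigcup_J D(A_J)$ fails.

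The paper's route replaces your false equality with the weaker but correct claim $E(A)=\bigcup_J E(A_J)$, where $E(\cdot)$ retains only the $d$-dimensional parts of the standard set, and this weaker claim is all that is needed for counting $d$-planes. The easy inclusion $E(A)\supseteq\bigcup_J E(A_J)$ follows from $I(A)\subseteq I(A_J)$ as in your first paragraph. The hard inclusion is proved constructively: fix $\alpha\notin\bigcup_J E(A_J)$ and a target direction $J$; using the inductive hypothesis that Theorem~\ref{number} holds for each $A_{J'}$, one finds $f_J\in I(A_J)$ with $\mathrm{LE}(f_J)=\alpha+\beta_J$ for some $\beta_J\in\oplus_{j\in J}\mathbb{N}e_j$, and for each $J'\neq J$ some $f_{J'}\in I(A_{J'})$ with $\mathrm{LE}(f_{J'})\in\oplus_{j\in J}\mathbb{N}e_j$; the product $f=f_J\prod_{J'\neq J}f_{J'}$ then lies in $I(A)$ with leading exponent $\alpha+\delta_J$, $\delta_J\in\oplus_{j\in J}\mathbb{N}e_j$, proving $\alpha\notin E(A)$. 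This multiplicativity-of-leading-exponents step is the idea your plan is missing. Your ``no leakage'' concern in the last paragraph, incidentally, is not fully resolved by your sketch either — a $d$-plane could a priori be covered by a $d$-plane from $D(A_J)$ together with lower-dimensional slivers from other $D(A_{J'})$'s — but this concern becomes moot once one works at the level of $E(\cdot)$ as the paper does.
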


\begin{proof}
  We show that $E(A)=\cup_{J}E(A_{J})$, where the union goes over all $J$ with $\#J=d$. 
  One inclusion is immediate: Since $I(A)\subset I(A_{J})$ for all $J$, it follows that $C(A)\subset C(A_{J})$ for all $J$, 
  hence $C(A)\subset \cap_{J}C(A_{J})$. Taking complements, we get $D(A)\supset\cup_{J}D(A_{J})$, 
  hence, in particular, also $E(A)\supset\cup_{J}E(A_{J})$. 
  
  As for the other inclusion, we have to show that $\mathbb{N}^n-E(A)\supset\mathbb{N}^n-\cup_{J}E(A_{J})$. 
  Take an arbitrary $\alpha$ on the right hand side. 
  We have to show that $\alpha$ lies also in the left hand side, 
  which means that for all $J$ containing $d$ elements,
  there exists $\delta_{J}\in\oplus_{j\in J}\mathbb{N}e_{j}$ 
  such that $\alpha+\delta_{J}\in\mathbb{N}^n-D(A)=C(A)$. 
  
  We have $\alpha\in\mathbb{N}^n-\cup_{J}E(A_{J})=\cap_{J}(\mathbb{N}^n-E(A_{J}))$.
  We fix one $J$ and consider the inclusion $\alpha\in\mathbb{N}^n-E(A_{J})$.
  By hypothesis, the assertion of Theorem \ref{number} holds for $A_{J}$, 
  hence $E(A_{J})$ consists solely of $d$-planes parallel to $\oplus_{j\in J}\mathbb{N}e_{j}$. Therefore, 
  there exists a $\beta_{J}\in\oplus_{j\in J}\mathbb{N}e_{j}$ such that $\alpha+\beta_{J}\in\mathbb{N}^n-D(A_{J})=C(A_{J})$. 
  In particular, there exists an $f_{J}\in I(A_{J})$ such that ${\rm LE}(f_{J})=\alpha+\beta_{J}$. 
  
  Next, consider an arbitrary $J^\prime\neq J$ with $\#J^\prime=d$. 
  Since the assertion of Theorem \ref{number} also holds for $A_{J^\prime}$, 
  all $d$-planes in $D(A_{J^\prime})$ are parallel to $\oplus_{j\in J^\prime}\mathbb{N}e_{j}$.
  In particular, the plane $\oplus_{j\in J}\mathbb{N}e_{j}$ is not contained in $D(A_{J^\prime})$. 
  Therefore, there exists a $\gamma_{J^\prime}\in\oplus_{j\in J}\mathbb{N}e_{j}$ which also lies in $C(A_{J^\prime})$. 
  Hence, there exists an $f_{J^\prime}\in I(A_{J^\prime})$ whose leading exponent equals $\gamma_{J^\prime}$. 
  Consider
  \begin{equation*}
    f=f_{J}\prod_{J^\prime\neq J}f_{J^\prime}\in I(A)\,,
  \end{equation*}
  then for the leading exponents of $f$, we have
  \begin{equation*}
    {\rm LE}(f)={\rm LE}(f_{J})+\sum_{J^\prime\neq J}{\rm LE}(f_{J^\prime})
    =\alpha+\beta_{J}+\sum_{J^\prime\neq J}\gamma_{J^\prime}=\alpha+\delta_{J}\in C(A)\,,
  \end{equation*}
  where $\delta_{J}\in\oplus_{j\in J}\mathbb{N}e_{j}$, as desired.
\end{proof}

The lemma provides a first reduction in the proof of Theorem \ref{number}. 
For reducing the statement further, 
we consider the particular case in which all components $A^\prime$ of $A$ are in fact linear spaces.
More precisely, we draw our attention to the following two statements. 

\begin{itemize}
  \item $\mathcal{A}(d,n)$: 
  The assertion of Theorem \ref{number} holds if all irreducible components of $A$ are affine $d$-planes. 
  \item $\mathcal{L}(d,n)$: 
  The assertion of Theorem \ref{number} holds if all irreducible components of $A$ are linear $d$-planes. 
\end{itemize}

\begin{pro}\label{iff}
  For all $d$ and $n$, we have $\mathcal{A}(d,n)$ if, and only if, for all $d$ and $n$, we have $\mathcal{L}(d,n)$.
\end{pro}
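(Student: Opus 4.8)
The plan is to prove $\mathcal{A}(d,n)\Leftrightarrow\mathcal{L}(d,n)$ by relating an arbitrary affine variety $A$ with affine $d$-plane components to a linear variety in one more variable, using the embedding $\iota$ from (\ref{iota}). The implication $\mathcal{A}(d,n)\Rightarrow\mathcal{L}(d,n)$ is trivial, since linear $d$-planes are a special case of affine $d$-planes; the content is in $\mathcal{L}(\cdot,\cdot)\Rightarrow\mathcal{A}(d,n)$, and more precisely I expect to need $\mathcal{L}(d+1,n+1)$ (together with possibly $\mathcal{L}(d',n')$ for smaller parameters) to deduce $\mathcal{A}(d,n)$. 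So the natural statement to aim at is: for fixed $d,n$, the truth of $\mathcal{L}(d+1,n+1)$ implies $\mathcal{A}(d,n)$; quantifying over all $d,n$ then gives the proposition.

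First I would set up the geometric side. Let $A\subset\mathbb{A}^n$ have irreducible components the affine $d$-planes $A^{(1)},\ldots,A^{(m)}$. Applying $\iota$ and taking linear spans produces $(d+1)$-dimensional linear subspaces $\widehat{A}^{(1)},\ldots,\widehat{A}^{(m)}$ of $\mathbb{A}^{n+1}$, none meeting $\{X_0=0\}$; let $\widehat{A}=\bigcup_i\widehat{A}^{(i)}\subset\mathbb{A}^{n+1}$, a linear variety to which $\mathcal{L}(d+1,n+1)$ applies. The key algebraic step is to compare the standard sets: I claim $D(A)$ and $D(\widehat{A})$ determine each other combinatorially via the ``slicing at $X_0$'' operation. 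Concretely, $\widehat{A}$ is homogeneous, and its defining ideal $I(\widehat A)\subset k[X_0,\ldots,X_n]$ is the homogenization of $I(A)$ with respect to $X_0$; since we ordered the term order with $X_1<\cdots<X_n$ and $X_0$ is the new smallest variable, I would arrange the term order on $k[X_0,\ldots,X_n]$ to be the block order with $X_0<X_1<\cdots<X_n$ refining the given order, so that passing to the initial ideal commutes with homogenization. This should give $C(\widehat A)=C(A)\oplus\mathbb{N}e_0$ as subsets of $\mathbb{N}^{n+1}$, hence $D(\widehat A)=\{\alpha\in\mathbb{N}^{n+1}:\ \alpha-\alpha_0 e_0\in D(A)\text{ if we additionally allow }X_0\text{-powers}\}$; I need to pin down the exact bijection, but the upshot is that $d$-planes of $D(A)$ parallel to $\bigoplus_{j\in J}\mathbb{N}e_j$ correspond bijectively to $(d+1)$-planes of $D(\widehat A)$ parallel to $\mathbb{N}e_0\oplus\bigoplus_{j\in J}\mathbb{N}e_j$.

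Next I would match the combinatorial count to the geometric count. By Proposition \ref{unique}, if $A^{(i)}$ has minimal free variables $\{X_j:j\in J\}$ then $A^{(i)}$ is cut out by equations (\ref{eqs}); homogenizing, $\widehat{A}^{(i)}$ is the linear span with ``free'' coordinates indexed by $\{0\}\cup J$. One checks directly from Definition \ref{minimal} that the minimal free variables of the linear space $\widehat A^{(i)}$ (with respect to the order $X_0<X_1<\cdots<X_n$) are exactly $\{X_0\}\cup\{X_j:j\in J\}$: $X_0$ is always a minimal free variable of $\widehat A^{(i)}$ because $\widehat A^{(i)}\not\subset\{X_0=0\}$, and for the remaining indices the conditions in Definition \ref{minimal} for $\widehat A^{(i)}$ reduce verbatim to those for $A^{(i)}$ since the homogenized equations (\ref{eqs}) have the same support in the variables $X_j$, $j>0$. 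Therefore, writing $\widehat J=\{0\}\cup J$, the number $\widehat m_{\widehat J}$ of components of $\widehat A$ with minimal free variables $\{X_i:i\in\widehat J\}$ equals $m_J$. Now, assuming $\mathcal{L}(d+1,n+1)$, Theorem \ref{number} holds for $\widehat A$, so the number of $(d+1)$-planes in $D(\widehat A)$ parallel to $\bigoplus_{i\in\widehat J}\mathbb{N}e_i$ equals $\widehat m_{\widehat J}=m_J$. Combined with the bijection between such $(d+1)$-planes and $d$-planes in $D(A)$ parallel to $\bigoplus_{j\in J}\mathbb{N}e_j$ from the previous paragraph, this yields exactly $\mathcal{A}(d,n)$.

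The main obstacle is the precise compatibility between homogenization and initial ideals, i.e.\ proving $C(\widehat A)=C(A)\oplus\mathbb{N}e_0$ and the induced bijection on $d$-planes. One has to be careful: homogenization with respect to a variable that is \emph{smallest} in the term order behaves well (it is essentially the Rees-algebra/flat-degeneration picture), but I should verify that $I(\widehat A)$ really is the ideal generated by the homogenizations of a Gr\"obner basis of $I(A)$ — this needs that the chosen block order on $k[X_0,\ldots,X_n]$ refines the given order on $k[X_1,\ldots,X_n]$ and that homogenizing a Gr\"obner basis yields a Gr\"obner basis, which is a standard fact about homogenization with the lowest-weight variable. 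A secondary, milder obstacle is the bookkeeping in Definition \ref{minimal} to confirm that adjoining $X_0$ to the index set is exactly the right operation on minimal free variables; this is where one uses that $\widehat A^{(i)}\cap\{X_0=0\}=\varnothing$ and that $X_0$ is the smallest variable, so $X_0$ can never be replaced by a smaller free variable. Everything else is routine tracking of $d$-planes through the coordinate bijection $\mathbb{N}^{n+1}\leftrightarrow\mathbb{N}e_0\oplus\mathbb{N}^n$.
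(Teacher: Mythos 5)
Your approach is the same as the paper's: homogenize $A$ to a cone $\widehat{A}\subset\mathbb{A}^{n+1}$ via $\iota$, put a product order $\prec$ on $k[X_0,X]$ with $X_0$ smallest, check that homogenization sends minimal free variables $J$ to $\widehat{J}=J\cup\{0\}$, apply $\mathcal{L}(d+1,n+1)$ to $\widehat{A}$, and transport the count of top-dimensional planes back to $D(A)$. (The paper inserts a preliminary reduction via Lemma~\ref{decompose} to a single index set $J$, which you skip; that reduction is only a notational convenience.)

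One intermediate claim you make, however, is false in general: $C(\widehat{A})=C(A)\oplus\mathbb{N}e_{0}$. The inclusion $C(\widehat{A})\subset C(A)\oplus\mathbb{N}e_{0}$ is correct (dehomogenize), but the reverse inclusion would require that every $\alpha\in C(A)$ already satisfies $(0,\alpha)\in C(\widehat{A})$, i.e.\ that some $g\in I(A)$ with $\mathrm{LE}(g)=\alpha$ has $\deg g=|\alpha|$. If $<$ is not a graded order this can fail: homogenizing $g$ produces $\widehat{g}$ with $\mathrm{LE}(\widehat{g})=(\deg g-|\alpha|,\alpha)$, and the $X_0$--offset $\deg g-|\alpha|$ can be strictly positive even for ideals of unions of $d$-planes (e.g.\ in the paper's Example~\ref{ex2}, several Gr\"obner basis elements have total degree strictly larger than the total degree of their leading monomial under lex). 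So $\widehat{G}$ need not generate the same initial ideal after forgetting $X_0$-degrees, and $D(\widehat{A})$ can be strictly larger than $\mathbb{N}e_{0}\oplus D(A)$.

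What saves the argument --- and what the paper actually proves --- is the weaker ``upshot'' you state at the end of your second paragraph: one only needs that $(d+1)$-planes of $D(\widehat{A})$ correspond bijectively to $d$-planes of $D(A)$. This is established directly: given a $(d+1)$-plane $\alpha^{(\ell)}+\oplus_{j\in\widehat{J}}\mathbb{N}e_{j}\subset D(\widehat{A})$ (and $0\in\widehat{J}$ is guaranteed by $\mathcal{L}(d+1,n+1)$), if some $\beta\in p(\alpha^{(\ell)})+\oplus_{j\in J}\mathbb{N}e_{j}$ were in $C(A)$, homogenizing the witness $g$ would put $(\gamma_0,\beta)$ in $C(\widehat{A})$ for \emph{some} $\gamma_{0}$, which already lies in the $(d+1)$-plane because the $e_0$-direction is free there --- you never need $\gamma_{0}=0$. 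Conversely, a $d$-plane $\beta+\oplus_{j\in J'}\mathbb{N}e_{j}\subset D(A)$ lifts to a $(d+1)$-plane in $D(\widehat{A})$ by dehomogenizing any putative witness $f$ (after passing to the homogeneous component containing its leading term). You should drop the stronger identity $C(\widehat{A})=C(A)\oplus\mathbb{N}e_0$ and argue plane-by-plane in this manner; otherwise the outline is sound.
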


\begin{proof}
  Only the ``if'' direction needs a proof. Let $A$ be a variety as in assertion $\mathcal{A}(d,n)$. 
  By Lemma \ref{decompose}, 
  we may assume that the minimal free variables of each component of $A$ are $\{X_{j};j\in J\}$, for a fixed $J$. 
  Denote by $\widehat{I}$ the homogenisation of the ideal $I=I(A)\subset k[X]$ in the polynomial ring $k[X_{0},X]$, 
  and denote by $\widehat{A}\subset\mathbb{A}^{n+1}={\rm Spec}\,k[X_{0},X]$ the corresponding variety. 
  Clearly, each irreducible component of $\widehat{A}$ is the linear $(d+1)$-space spanned by $\iota(A^\prime)$, 
  where $A^\prime$ is an irreducible component of $A$, and $\iota$ is the map (\ref{iota}).
  One easily checks that the minimal free variables of the irreducible components of 
  $\widehat{A}$ are $\{X_{j};j\in \widehat{J}\}$, 
  where $\widehat{J}=J\cup\{0\}$. 
  
  We define a term order $\prec$ on $k[X_{0},X]$ by 
  $X_{0}^{\alpha_{0}}X^\alpha\prec X_{0}^{\beta_{0}}X^\beta$ if
  either $\alpha<\beta$, or $\alpha=\beta$ and $\alpha_{0}<\beta_{0}$. 
  Then clearly $X_{0}\prec\ldots\prec X_{n}$, hence, 
  the term order $\prec$ on $k[X_{0},X]$ has an analogous formal property as the term order 
  $<$ on $k[X]$ we have been working with throughout.
  We may apply assertion $\mathcal{L}(d+1,n+1)$ to the variety $\widehat{A}$, 
  computing $D(\widehat{A})$ w.r.t. $\prec$. 
  Thus, the set $D(\widehat{A})\subset\mathbb{N}^{n+1}$ 
  contains as many $(d+1)$-planes as $A$ has irreducible components, say $m$,
  and each of these $(d+1)$-planes is parallel to $\oplus_{j\in\widehat{J}}\mathbb{N}e_{j}$. 
  
  Let $\alpha^{(\ell)}+\oplus_{j\in\widehat{J}}\mathbb{N}e_{j}$, for $\ell=1,\ldots,m$, 
  be the $(d+1)$-planes in $D(\widehat{A})$.
  We show that the $d$-planes in $D(A)$ are $p(\alpha^{(\ell)})+\oplus_{j\in J}\mathbb{N}e_{j}$, 
  for $\ell=1,\ldots,m$, where $p$ is the projection
  \begin{equation*}
    p:\mathbb{N}^{n+1}\to\mathbb{N}^n:(\alpha_{0},\ldots,\alpha_{n})\mapsto(\alpha_{1},\ldots,\alpha_{n})\,.
  \end{equation*}
  (Therefrom, the assertion of the proposition is immediate.)
  
  On the one hand, each $p(\alpha^{(\ell)})+\oplus_{j\in J}\mathbb{N}e_{j}$ is contained in $D(A)$. 
  Otherwise, there exists a $\beta$ in some $p(\alpha^{(\ell)})+\oplus_{j\in J}\mathbb{N}e_{j}$ and a 
  $g\in I(A)$ with ${\rm LE}(g)=\beta$. By definition of $\prec$, the homogenisation of $g$, call it $f$, 
  has ${\rm LE}(f)\in\alpha^{(\ell)}+\oplus_{j\in\widehat{J}}\mathbb{N}e_{j}\subset C(\widehat{A})$, a contradiction.
  
  On the other hand, $D(A)$ contains no $d$-planes other than $p(\alpha^{(\ell)})+\oplus_{j\in J}\mathbb{N}e_{j}$,
  for $\ell=1,\ldots,m$. Indeed, assume that $\beta+\oplus_{j\in J^\prime}\mathbb{N}e_{j}$ is contained in $D(A)$,
  for some $J^\prime$ with $\#J^\prime=d$ and some $\beta\in\mathbb{N}^n$.
  In particular, for all $\gamma\in\beta+\oplus_{j\in J^\prime}\mathbb{N}e_{j}$, 
  there exists no $g\in I(A)$ with leading exponent $\gamma$. 
  We claim that 
  \begin{equation}\label{subs}
    (0,\beta)+\oplus_{j\in\widehat{J}^\prime}\mathbb{N}e_{j}\subset D(\widehat{A})\,,
  \end{equation}
  where $\widehat{J}^\prime=J^\prime\cup\{0\}$. Otherwise, 
  there exists a $(\gamma_{0},\gamma)\in(0,\beta)+\oplus_{j\in\widehat{J}^\prime}\mathbb{N}e_{j}$ 
  and an $f\in I(\widehat{A})$ with ${\rm LE}(f)=(\gamma_{0},\gamma)$. 
  Since the ideal $I(\widehat{A})$ is homogeneous, all homogenous components of $f$ lie in $I(\widehat{A})$. 
  Upon replacing $f$ by its homogeneous component of highest total degree, we may assume that $f$ is itself homogeneous.
  Then clearly $g=f(1,X)\in I(A)$, and by definition of $\prec$, we have ${\rm LE}(g)=\gamma$, 
  a contradiction. Inclusion (\ref{subs}) is proved, and shows that there exists an $\ell$ such that
  \begin{equation*}
    (0,\beta)+\oplus_{j\in\widehat{J}^\prime}\mathbb{N}e_{j}=\alpha^{(\ell)}+\oplus_{j\in\widehat{J}}\mathbb{N}e_{j}\,,
  \end{equation*}
  hence also 
  \begin{equation*}
    \beta+\oplus_{j\in J^\prime}\mathbb{N}e_{j}=p(\alpha^{(\ell)})+\oplus_{j\in J}\mathbb{N}e_{j}\,.
  \end{equation*}
\end{proof}

\begin{pro}\label{homogeneous}
  For all $d$ and $n$, the statement $\mathcal{L}(d,n)$ is true.
\end{pro}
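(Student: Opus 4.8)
The plan is to reduce, via Lemma \ref{decompose}, to the case where the $m$ irreducible components $A^{(1)},\dots,A^{(m)}$ of $A$ are linear $d$-planes sharing one and the same set of minimal free variables $\{X_{j};j\in J\}$, for a fixed $J$ with $\#J=d$: each variety $A_{J}$ occurring in that lemma is again a union of linear $d$-planes with common minimal free variables $J$, so it suffices to prove the assertion of Theorem \ref{number} in this special case, i.e., that $D(A)$ contains exactly $m$ $d$-planes parallel to $\oplus_{j\in J}\mathbb{N}e_{j}$ and none parallel to any $\oplus_{j\in J'}\mathbb{N}e_{j}$ with $J'\neq J$. Write $S=k[X_{j};j\in J]$. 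By Proposition \ref{unique}, $I(A^{(\ell)})$ is generated by the linear polynomials $X_{i}-L_{i}^{(\ell)}$, $i\notin J$, where $L_{i}^{(\ell)}=-\sum_{j\in J,\,j<i}b_{i,j}^{(\ell)}X_{j}\in S$; their leading monomials $X_{i}$ are pairwise coprime, so these polynomials form a Gr\"obner basis, whence $D(A^{(\ell)})=\oplus_{j\in J}\mathbb{N}e_{j}$ and $k[X]/I(A^{(\ell)})\cong S$. Since $I(A)=\bigcap_{\ell}I(A^{(\ell)})$ we obtain an injection $k[X]/I(A)\hookrightarrow\prod_{\ell}k[X]/I(A^{(\ell)})=\prod_{\ell}S$; following it by a single projection yields a surjection $k[X]/I(A)\twoheadrightarrow S$ under which the residues of the monomials $X^{\alpha}$, $\alpha\in\oplus_{j\in J}\mathbb{N}e_{j}$, remain $k$-linearly independent, so $\oplus_{j\in J}\mathbb{N}e_{j}\subseteq D(A)$; in particular $D(A)$ contains a $d$-plane.

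Next I would rule out the wrong directions. Fix $J'\neq J$ with $\#J'=d$ and choose $j_{0}\in J'\setminus J$. For each $\ell$ the polynomial $X_{j_{0}}-L_{j_{0}}^{(\ell)}$ lies in $I(A^{(\ell)})$, so
\begin{equation*}
  g\;:=\;\prod_{\ell=1}^{m}\bigl(X_{j_{0}}-L_{j_{0}}^{(\ell)}\bigr)\;\in\;\bigcap_{\ell}I(A^{(\ell)})=I(A)\,.
\end{equation*}
Since each $L_{j_{0}}^{(\ell)}$ is a linear form in the variables $X_{j}$ with $j<j_{0}$, every monomial occurring in $g$ other than $X_{j_{0}}^{m}$ has the shape $X_{j_{0}}^{m-s}X^{\beta}$ with $\beta$ supported on $\{j<j_{0}\}$ and $|\beta|=s\geq1$, and any such monomial is $<X_{j_{0}}^{m}$; hence ${\rm LE}(g)=m e_{j_{0}}$. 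Given any $\gamma$ with $\gamma_{j}=0$ for $j\in J'$, the element $X^{\gamma}g\in I(A)$ has ${\rm LE}(X^{\gamma}g)=\gamma+m e_{j_{0}}\in\gamma+\oplus_{j\in J'}\mathbb{N}e_{j}$, so this $d$-plane meets $C(A)$ and cannot lie in $D(A)$. Together with the previous paragraph this shows $E(A)$ is a union of $d$-planes, all parallel to $\oplus_{j\in J}\mathbb{N}e_{j}$; it remains to see their number equals $m$.

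Finally I would count by Hilbert functions. Since $A$ is a cone, $I(A)$ is homogeneous, so $\#\{\alpha\in D(A):|\alpha|=t\}=\dim_{k}(k[X]/I(A))_{t}$, which for $t\gg0$ equals the Hilbert polynomial of $k[X]/I(A)$. From the homogeneous short exact sequence $0\to k[X]/I(A)\to\prod_{\ell}S\to Q\to0$, whose cokernel $Q$ is supported on $\bigcup_{\ell<\ell'}(A^{(\ell)}\cap A^{(\ell')})$, a union of linear spaces of dimension $\leq d-1$, this Hilbert polynomial equals $m\binom{t+d-1}{d-1}$ up to terms of lower degree in $t$ (equivalently $\deg A=m$); in particular $\dim\,k[X]/I(A)=d$, so every coordinate plane contained in $D(A)$ has dimension $\leq d$. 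Now $C(A)$ is the exponent set of the monomial ideal ${\rm in}_{<}(I(A))$, so $D(A)$ is a finite union of coordinate planes $\gamma+\oplus_{j\in K}\mathbb{N}e_{j}$, those of dimension $d$ being precisely the $N$ $d$-planes of $D(A)$ and the rest of dimension $\leq d-1$. Counting degree-$t$ lattice points of this union by inclusion--exclusion, each $d$-plane contributes $\binom{t+d-1}{d-1}$ up to lower-order terms, while all lower-dimensional pieces and all pairwise overlaps contribute only lower-order terms; hence the degree-$t$ count has leading coefficient $N/(d-1)!$. Comparison with the Hilbert polynomial gives $N=m$, and by the second step all of these planes are parallel to $\oplus_{j\in J}\mathbb{N}e_{j}$. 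This proves $\mathcal{L}(d,n)$ (the case $d=0$ being trivial, $A$ then being the origin with $m=1$).

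The step needing the most care is the last one: one must check that the top-degree term of the lattice-point count of the standard staircase is governed exactly by the number of top-dimensional planes, overlaps and lower strata being genuinely negligible, and that the resulting leading coefficient is matched by $\deg A=m$ --- that is, that passing from $I(A)$ to ${\rm in}_{<}(I(A))$ preserves the Hilbert polynomial. The reduction via Lemma \ref{decompose} and the elimination of the wrong directions via the auxiliary polynomial $g$ are, by contrast, essentially formal once Proposition \ref{unique} is in hand.
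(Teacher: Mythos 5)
Your proof takes the same route as the paper's: reduce to a fixed $J$ via Lemma \ref{decompose}, use the product $\prod_{\ell}(X_{j_0}+\sum_{j\in J,\,j<j_0}b^{(\ell)}_{j_0,j}X_j)$ to kill all $d$-planes parallel to the wrong directions, and then count the remaining $d$-planes via the Hilbert function (the paper simply cites \cite{cox2} for the last step; you flesh it out). One small warning about the first paragraph: the inference ``the residues of $X^\alpha$, $\alpha\in\oplus_{j\in J}\mathbb{N}e_j$, are $k$-linearly independent in $k[X]/I(A)$, hence $\oplus_{j\in J}\mathbb{N}e_j\subseteq D(A)$'' is not valid --- linear independence of a family of monomials modulo $I$ does not force those monomials to be standard (e.g.\ the powers $Y^t$ are linearly independent modulo $(Y^2-X)$ with $X<Y$, but only $1,Y$ are standard). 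The claim $\oplus_{j\in J}\mathbb{N}e_j\subseteq D(A)$ is nonetheless true, and more cheaply so: $I(A)\subseteq I(A^{(1)})$ gives $C(A)\subseteq C(A^{(1)})$, hence $D(A)\supseteq D(A^{(1)})=\oplus_{j\in J}\mathbb{N}e_j$. Since your Hilbert-polynomial count in the last paragraph does not actually rely on this inclusion, the overall proof stands.
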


\begin{proof}
  Let $A$ be a variety as in assertion $\mathcal{L}(d,n)$. 
  As above, we may assume that the minimal free variables of each component of $A$ are $\{X_{j};j\in J\}$, for a fixed $J$.
  Take any $J^\prime\subset\{1,\ldots,n\}$ with $\#J^\prime=d$ and $J^\prime\neq J$. 
  Then there exists an $\ell\in J^\prime-J$, and, by Proposition \ref{unique}, 
  for each irreducible component $A^{(i)}$ of $A$, an equation 
  \begin{equation*}
    X_{\ell}+\sum_{j\in J,j<\ell}b^{(i)}_{\ell,j}X_{j}=0
  \end{equation*}
  defining $A^{(i)}$. Consider the product
  \begin{equation*}
    f=\prod_{i=1}^m(X_{\ell}+\sum_{j\in J,j<\ell}b^{(i)}_{\ell,j}X_{j})\,,
  \end{equation*}
  where $m$ is the number of irreducible components of $A$, 
  then clearly $f\in I(A)$ and ${\rm LE}(f)=me_{\ell}\in\oplus_{j\in J^\prime}\mathbb{N}e_{j}$. 
  Therefore, all $d$-planes in $D(A)$ are parallel to $\oplus_{j\in J}\mathbb{N}e_{j}$. 
  By well-known properties of the Hilbert function, the set $D(A)$ contains precisely $m$ $d$-planes, 
  see \cite{cox2}. 
\end{proof}

Propositions \ref{iff} and \ref{homogeneous} prove Theorem \ref{number}. 

Now that we have derived $\mathcal{A}(d,n)$ from $\mathcal{L}(d,n)$,
and have proved the latter by a very classical token (the Hilbert function), 
the reader might ask why $\mathcal{A}(d,n)$ is remarkable at all. 
However, in Section \ref{families}, we will study the standard monomials of varieties as in $\mathcal{L}(d,n)$
by methods for which the use of varieties as in $\mathcal{A}(d-1,n-1)$ is essential.

Theorem \ref{number} is indeed a higher dimensional analogue of Proposition \ref{finiteness}: 
The variety $A$ is composed by $m=\sum_{J}m_{J}$ affine planes of dimension $d$, 
and accordingly, the set of standard monomials $D(A)$ is composed by $m=\sum_{J}m_{J}$ planes of dimension $d$. 
Additionally, Theorem \ref{number} specifies the directions of the $d$-planes in $D(A)$ 
in terms of the directions of the components of $A$
(by means of the minimal free variables of the components). 
Note that Theorem \ref{number} does not claim that $D(A)$ consists solely of $d$-planes. 
In general, $D(A)$ will also contain lower-dimensional planes not contained in any $d$-plane. 
Here is an example for this.

\begin{ex}\label{ex1}
Take the graded lexicographic order on $\mathbb{Q}[X,Y,Z]$ such that $X<Y<Z$.
Let $A$ be the subvariety of $\mathbb{A}^3$ over $\mathbb{Q}$ with components $A^{(1)}$ and $A^{(2)}$, 
given by the Gr\"obner bases of their ideals,
\begin{equation*}
  \begin{split}
  I^{(1)}&=(Y-X,Z-1)\,,\\
  I^{(2)}&=(X,Z-Y)\,.
  \end{split}
\end{equation*}
Figure \ref{ex1im1} shows a picture of $A$, along with the hyperplanes $\{Z=1\}$ and $\{X=0\}$ in which
the components $A^{(1)}$, resp. $A^{(2)}$, lie.
The minimal free variable of $A^{(1)}$ is $X$, and the minimal free variable of $A^{(2)}$ is $Y$. 
The respective standard sets are $D(A^{(1)})=\mathbb{N}e_{1}$ and $D(A^{(2)})=\mathbb{N}e_{2}$.
The ideal of $A$ has the Gr\"obner basis
\begin{equation*}
  \begin{split}
  I(A)=&(YX-X^2,ZX-YX+X^2-X,\\
  &ZY-Y^2+YX-X,Z^2-ZY+ZX-Z+Y-X)\,.
  \end{split}
\end{equation*}
From the Gr\"obner basis, 
we deduce that the standard $D(A)$ contains the axes $\mathbb{N}e_{1}$ and $\mathbb{N}e_{2}$, 
and also the isolated element $(0,0,1)$, see Figure \ref{ex1im2}. 
In the picture, the solid blocks parallel to $e_{1}$ and $e_{2}$ actually go to infinity.
\end{ex}

\begin{center}
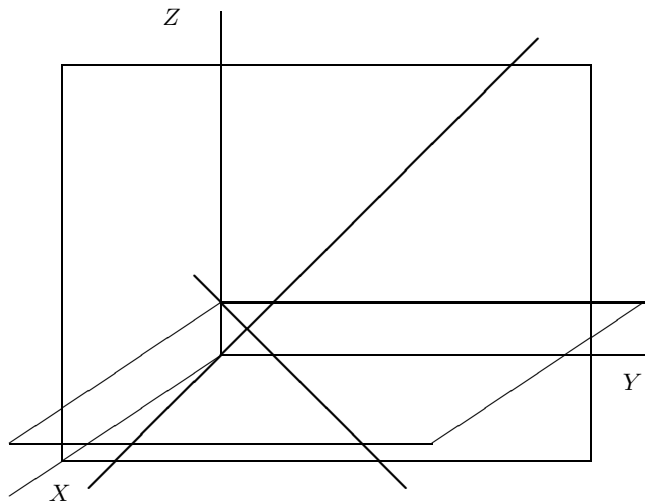
\begin{figure}
  \begin{picture}(400,220)
     \put(130,60){\line(-3,-2){80}}
     \put(65,5){{\footnotesize $X$}}
     \put(130,60){\line(1,0){160}}
     \put(282,47){{\footnotesize $Y$}}
     \put(130,60){\line(0,1){130}}
     \put(108,185){{\footnotesize $Z$}}
     \put(70,20){\line(1,0){200}}
     \put(270,20){\line(0,1){150}}
     \put(270,170){\line(-1,0){200}}
     \put(70,170){\line(0,-1){150}}
     \put(130,80){\line(1,0){160}}
     \put(130,80){\line(-3,-2){80}}
     \put(290,80){\line(-3,-2){80}}
     \put(50,26.5){\line(1,0){160}}
     \thicklines
     \put(130,60){\line(1,1){120}}
     \put(130,60){\line(-1,-1){50}}
     \put(130,80){\line(1,-1){70}}
     \put(130,80){\line(-1,1){10}}
  \end{picture}
\caption{The variety $A$ of Example \ref{ex1}}
\label{ex1im1}
\end{figure}
\end{center}

\begin{center}
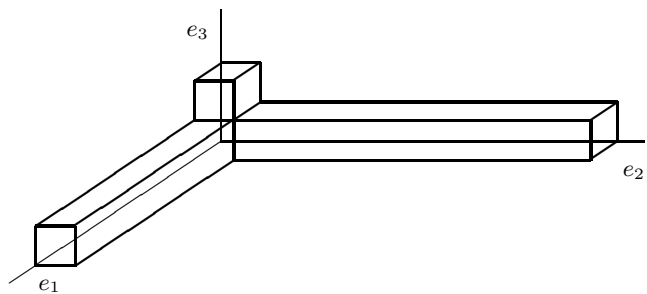
\begin{figure}
  \begin{picture}(400,140)
     \put(130,60){\line(-3,-2){80}}
     \put(61,4){{\footnotesize $e_{1}$}}
     \put(130,60){\line(1,0){160}}
     \put(282,47){{\footnotesize $e_{2}$}}
     \put(130,60){\line(0,1){50}}
     \put(117,100){{\footnotesize $e_{3}$}}
     \thicklines
     \put(145,75){\line(-3,-2){70}}
     \put(120,68){\line(-3,-2){60}}
     \put(135,53){\line(-3,-2){60}}
     \put(75,13){\line(0,1){15}}
     \put(75,13){\line(-1,0){15}}
     \put(60,13){\line(0,1){15}}
     \put(60,28){\line(1,0){15}}
     \put(145,75){\line(1,0){135}}
     \put(280,75){\line(-3,-2){10}}
     \put(280,75){\line(0,-1){15}}
     \put(280,60){\line(-3,-2){10}}
     \put(270,68){\line(0,-1){15}}
     \put(270,68){\line(-1,0){150}}
     \put(270,53){\line(-1,0){135}}
     \put(135,53){\line(0,1){30}}
     \put(145,75){\line(0,1){15}}
     \put(120,68){\line(0,1){15}}
     \put(120,83){\line(1,0){15}}
     \put(120,83){\line(3,2){10}}
     \put(145,90){\line(-1,0){15}}
     \put(145,90){\line(-3,-2){10}}
  \end{picture}
\caption{The standard set of $A$ in Example \ref{ex1}}
\label{ex1im2}
\end{figure}
\end{center}

Thus, in our example, the set $D(A)$ is not the same as $E(A)$, 
but also contains ``lower-dimensional artifacts'', 
by which we understand the $d^\prime$-planes in the difference $D(A)-E(A)$, for all $d^\prime<d$.
Of course, if $A$ consists of only one component, then $D(A)=E(A)$. Thus, 
lower-dimensional artifacts arise from the amalgamation of different irreducible components into $A$.
In the rest of the paper, we find various sources from which lower-dimensional artifacts arise.

Note that in the proof of Lemma \ref{decompose}, we did not show $D(A)=\cup_{J}D(A_{J})$, 
but only the weaker assertion $E(A)=\cup_{J}E(A_{J})$. 
This deficit allows the possibiliy of lower-dimensional artifacts in the case where 
different components of $A$ have different minimal free variables, as in Example \ref{ex1}.
However, also in cases where some irreducible components of $A$ have the same minimal free variables, 
$D(A)$ will contain lower-dimensional artifacts. 
We will discuss such cases in the forthcoming sections.
They are motivated by the following special cases for the dimension of $A$, 
which are particularly easy to understand.
\begin{itemize}
  \item If $d=0$, then trivially, no lower-dimensional artifacts occur.
  \item If $d=n$, we have $A=\mathbb{A}^n$, and trivially, no lower-dimensional artifacts occur.
  \item If $d=n-1$, then each irreducible component of $A$ is an affine hyperplane, 
  hence given by one polynomial of degree $1$, and $A$ is given by the product of these. 
  Therefore, no lower-dimensional artifacts occur. More precisely, if for all $i=1,\ldots,n$, 
  the variety $A$ has $m_{i}$ irreducible components with minimal free variables $\{X_{j};j\in\{1,\ldots,n\}-\{i\}\}$, 
  then
  \begin{equation}\label{cup}
    D(A)=\cup_{i=1}^n(\cup_{\ell=0}^{m_{i}-1}(\ell e_{i}+\oplus_{j\in\{1,\ldots,n\}-\{i\}}\mathbb{N}e_{j}))\,.
  \end{equation}
\end{itemize}
This suggests to use some induction over $n$ and/or $d$. 
More precisely, we will consider the family of intersections
\begin{equation*}
  A_{\lambda}=A\cap\{X_{1}=\lambda\}\subset\mathbb{A}^{n-1}\,,
\end{equation*}
where $\lambda$ runs through all closed points of $\mathbb{A}^1$. 
Here, we identify the hyperplane $\{X_{1}=\lambda\}$ of $\mathbb{A}^{n}$ with 
$\mathbb{A}^{n-1}={\rm Spec}\,k[\overline{X}]$, where $\overline{X}=(X_{2},\ldots,X_{n})$.

Our variety $A$ will have $m$ irreducible components $A^{(1)},\ldots,A^{(m)}$, 
where the $\ell$-th component has miminal free variables $\{X_{j};j\in J^{(\ell)}\}$. 
Two cases will be treated separately. 
\begin{itemize}
  \item $1$ is not contained in any $J^{(\ell)}$. In this case, 
  there is a finite set $Y\subset\mathbb{A}^1$ such that for all $\lambda\in Y$,
  the intersection $A_{\lambda}$ is a variety consisting of $d$-dimensional affine planes in $\mathbb{A}^{n-1}$, 
  and for all $\lambda\in\mathbb{A}^1-Y$, the intersection $A_{\lambda}$ is empty. 
  This case will be studied in Section \ref{interpolation}.
  \item $1$ is contained in all $J^{(\ell)}$. In this case, 
  each intersection $A_{\lambda}$ is a variety consisting of $(d-1)$-dimensional affine planes in $\mathbb{A}^{n-1}$.
  This case will be studied in Section \ref{families}.
\end{itemize}

Finally, in Section \ref{general}, the results of Sections \ref{interpolation} and \ref{families} 
will be applied to the study of the general case, 
i.e. the case where we do not assume any restrictions on the various $J^{(\ell)}$. 
Our arguments will require the term order $<$ to have a property similar to the 
property of term order $\prec$ used above. 
Here, and in the rest of the article, $p$ denotes the projection
\begin{equation*}
  p:\mathbb{N}^n\to\mathbb{N}^{n-1}:(\alpha_{1},\ldots,\alpha_{n})\mapsto(\alpha_{2},\ldots,\alpha_{n})\,.
\end{equation*}
We use the same notation for the projection
\begin{equation*}
  p:\mathbb{A}^n\to\mathbb{A}^{n-1}:(a_{1},\ldots,a_{n})\mapsto(a_{2},\ldots,a_{n})\,.
\end{equation*}

\begin{dfn}\label{prodorder}
  A term order $<$ on $k[X]$ is called a {\rm product order} if
  for all $\alpha=(\alpha_{1},p(\alpha))$ and $\beta=(\beta_{1},p(\beta))$ in $\mathbb{N}^n$, 
  we have $\alpha<\beta$ if either $p(\alpha)<p(\beta)$ or $p(\alpha)=p(\beta)$ and
  $\alpha_{1}<\beta_{1}$. 
\end{dfn}

The only term order on $k[X]$ such that for all $i=1,\ldots,n$, 
its restriction to $k[X_{i},\ldots,X_{n}]$ is a product order, is the lexicographic order. 
The term order $\prec$ we used above is a product order on $k[X_{0},X]$.
In the forthcoming sections, 
will explicitly indicate each instance in which we need the term order $<$ to be a product order.

\section{An interpolation technique}\label{interpolation}

Let $A$ be a variety with $m$ irreducible components, where the $\ell$-th component, $A^{(\ell)}$, 
is a $d$-plane with minimal free variables $\{X_{j};j\in J^{(\ell)}\}$.
We assume that for all $\ell$, we have $1\notin J^{(\ell)}$.
This means that all $A^{(\ell)}$ are parallel to the hyperplane $\{X_{1}=0\}$. Let 
\begin{equation}\label{y}
  Y=p(A)\subset\mathbb{A}^1\,.
\end{equation}
Clearly, $Y$ is a finite subset in $\mathbb{A}^1$ (i.e., a Zariski-closed subset of $\mathbb{A}^1$),
and for all $\lambda\in Y$, 
the intersection $A_{\lambda}$ consists of $d$-dimensional affine planes in $\mathbb{A}^{n-1}$, 
whereas for all $\lambda\in\mathbb{A}^1-Y$, $A_{\lambda}=\emptyset$. For all $\lambda\in Y$, 
let $D(A_{\lambda})\in\mathbb{D}_{n-1}$  be the standard set of $A_{\lambda}$ 
w.r.t. the restriction of $<$ to $k[\overline{X}]$. 
In this section, we assume that the sets $D(A_{\lambda})$, $\lambda\in Y$, are given, 
and we show how they are ``stacked on each other'' to give $D(A)$. 
(This is done in the case where $<$ is a product order.)
In particular, we determine not only $E(A)$, but also all lower-dimensional artifacts in $D(A)$. 
The key operation is the following.

\begin{dfn}\label{addmap}
  Let $\overline{\mathbb{D}}_{n}$ be the set of all elements of $\mathbb{D}_{n}$ 
  containing no $1$-plane parallel to $\mathbb{N}e_{1}$, thus
  \begin{equation*}
    \overline{\mathbb{D}}_{n}=\{\delta\in\mathbb{D}_{n};\mathbb{N}e_{1}\,\,{\rm is}\,\,{\rm not}\,\,{\rm contained}
    \,\,{\rm in }\,\,\delta\}\,.
  \end{equation*}
  We define the {\rm addition map}
  \begin{equation}\label{sum}
    \begin{split}
    \overline{\mathbb{D}}_{n}\times\overline{\mathbb{D}}_{n}&\to\overline{\mathbb{D}}_{n}\\
    (\delta,\delta^\prime)&\mapsto\{\beta\in\mathbb{N}^n;p(\beta)\in p(\delta)\cup p(\delta^\prime),\nonumber\\
    &\,\,\,\beta_{1}<\#p^{-1}(p(\beta))\cap\delta+\#p^{-1}(p(\beta))\cap\delta^\prime\}\,.\nonumber
    \end{split}
  \end{equation}
\end{dfn}

This operation is commutative and associative (which justifies the name, addition), 
and the empty set $\emptyset\in\overline{\mathbb{D}}_{n}$ is neutral w.r.t. $+$. 
Further, if $\delta$ and $\delta^\prime$ are finite sets, 
then $\#(\delta+\delta^\prime)=\#\delta+\#\delta^\prime$. 
For the proofs of these remarks, 
and also of the fact that $\delta+\delta^\prime$ really lies in $\overline{\mathbb{D}}_{n}$, 
see \cite{jpaa}, Section 3.

Note that each $D(A_{\lambda})\subset\mathbb{N}^{n-1}$ can be embedded into 
$\overline{\mathbb{D}}_{n}$ via the map
\begin{equation*}
  \mathbb{N}^{n-1}\hookrightarrow\mathbb{N}^n:(\alpha_{2},\ldots,\alpha_{n})\mapsto(0,\alpha_{2},\ldots,\alpha_{n})\,.
\end{equation*}
In what follows we identify each $D(A_{\lambda})$ with its image in $\overline{\mathbb{D}}_{n}$.

\begin{thm}\label{stack}
  Let $A$ be a variety as introduced at the beginning of the present section, and define $Y$ by (\ref{y}). 
  If $<$ is a product order on $k[X]$, the standard set of $D(A)$ is given by
  \begin{equation*}
    D(A)=\sum_{\lambda\in Y}D(A_{\lambda})\,,
  \end{equation*}
  where the sum is defined by (\ref{sum}).
\end{thm}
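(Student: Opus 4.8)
The plan is to reduce Theorem \ref{stack} to two lemmas and then combine them by induction on $\#Y$. Throughout, $<$ is a product order, and the basic fact I would use is the following description of standard monomials in the $e_{1}$-direction: for any ideal $I\subset k[X]$ with $D(I)\in\overline{\mathbb{D}}_{n}$ and any $\overline{\alpha}\in\mathbb{N}^{n-1}$, the fibre $\{\,j\,;\,(j,\overline{\alpha})\in D(I)\,\}$ is an initial segment $\{0,\ldots,N_{\overline{\alpha}}(I)-1\}$ of $\mathbb{N}$ --- it is downward closed because ${\rm LE}$ is additive, so $(j,\overline{\alpha})\in C(I)$ forces $(j+1,\overline{\alpha})\in C(I)$ --- and its length is
\begin{equation*}
  N_{\overline{\alpha}}(I)=\dim_{k}\frac{R_{\preceq\overline{\alpha}}+I}{R_{\prec\overline{\alpha}}+I}\,,
\end{equation*}
where $R_{\preceq\overline{\alpha}}\subset k[X]$ (resp.\ $R_{\prec\overline{\alpha}}$) is the $k[X_{1}]$-span of those monomials $\overline{X}^{\overline{\gamma}}$ with $\overline{X}^{\overline{\gamma}}\leq\overline{X}^{\overline{\alpha}}$ (resp.\ $<\overline{X}^{\overline{\alpha}}$) in $k[\overline{X}]$. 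This holds because the product order refines the term order on $k[\overline{X}]$, so normal forms of elements of $R_{\preceq\overline{\alpha}}$ stay in $R_{\preceq\overline{\alpha}}$ and, modulo $R_{\prec\overline{\alpha}}$, are spanned by the standard monomials with $\overline{X}$-part $\overline{\alpha}$; uniqueness of normal forms gives their independence.

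The first lemma, which also serves as the base case, is: if $B\subset\{X_{1}=\mu\}\subset\mathbb{A}^{n}$ is identified with $\overline{B}\subset\mathbb{A}^{n-1}$, then $D(B)=\{(0,\overline{\alpha})\,;\,\overline{\alpha}\in D(\overline{B})\}$. Here $I(B)=I(\overline{B})k[X]+(X_{1}-\mu)$, and if $G$ is a Gr\"obner basis of $I(\overline{B})$ for $<|_{k[\overline{X}]}$, then $G\cup\{X_{1}-\mu\}$ is a Gr\"obner basis of $I(B)$ for the product order: any $f\in I(B)$ reduces modulo $X_{1}-\mu$ (whose leading exponent is $e_{1}$) to an element of $k[\overline{X}]\cap I(B)=I(\overline{B})$, which reduces to $0$ via $G$; and the leading exponents of elements of $G$ are the same whether computed in $k[\overline{X}]$ or in $k[X]$. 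Hence $C(B)=\langle e_{1}\rangle+C(\overline{B})$ (the latter embedded), giving the claim.

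The second lemma is: if $A',A''$ are unions of $d$-planes parallel to $\{X_{1}=0\}$ whose projections to the $X_{1}$-axis are disjoint, then $D(A'\cup A'')=D(A')+D(A'')$. Write $I=I(A'\cup A'')=I(A')\cap I(A'')$. The ideals $I(A')\cap k[X_{1}]$ and $I(A'')\cap k[X_{1}]$ cut out two disjoint finite subsets of $\mathbb{A}^{1}$, hence are coprime in $k[X_{1}]$; choose $u\in I(A')\cap k[X_{1}]$ and $v\in I(A'')\cap k[X_{1}]$ with $u+v=1$. The Chinese Remainder Theorem identifies $k[X]/I$ with $k[X]/I(A')\times k[X]/I(A'')$, and since $u,v\in k[X_{1}]$ multiplication by $u$ or $v$ preserves $R_{\preceq\overline{\alpha}}$ and $R_{\prec\overline{\alpha}}$; using the explicit inverse $(g',g'')\mapsto vg'+ug''$ one checks that for every $\overline{\alpha}$ this isomorphism restricts to an isomorphism
\begin{equation*}
  \frac{R_{\preceq\overline{\alpha}}+I}{R_{\prec\overline{\alpha}}+I}\ \cong\ \frac{R_{\preceq\overline{\alpha}}+I(A')}{R_{\prec\overline{\alpha}}+I(A')}\ \times\ \frac{R_{\preceq\overline{\alpha}}+I(A'')}{R_{\prec\overline{\alpha}}+I(A'')}\,.
\end{equation*}
Comparing dimensions gives $N_{\overline{\alpha}}(I)=N_{\overline{\alpha}}(I(A'))+N_{\overline{\alpha}}(I(A''))$ for all $\overline{\alpha}$, and since all fibres are initial segments this says precisely that the fibre of $D(A'\cup A'')$ over $\overline{\alpha}$ coincides with that of $D(A')+D(A'')$ as described in Definition \ref{addmap}; hence $D(A'\cup A'')=D(A')+D(A'')$.

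Finally I would run the induction on $\#Y$. For $\#Y\leq1$ the statement is the first lemma (with $Y=\emptyset$ giving $D(A)=\emptyset$). For $\#Y\geq2$, pick $\lambda\in Y$ and apply the second lemma to $A_{\lambda}\subset\{X_{1}=\lambda\}$ and $\bigcup_{\lambda'\neq\lambda}A_{\lambda'}$ (their $X_{1}$-projections are $\{\lambda\}$ and $Y\setminus\{\lambda\}$, which are disjoint); the first lemma evaluates $D(A_{\lambda})$ and the inductive hypothesis evaluates $D(\bigcup_{\lambda'\neq\lambda}A_{\lambda'})$, so $D(A)=\sum_{\lambda\in Y}D(A_{\lambda})$ after identifying each $A_{\lambda}$ with its image in $\mathbb{A}^{n-1}$. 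The main obstacle is the surjectivity of the restricted Chinese Remainder map in the second lemma: one must lift the partition of unity to polynomials in $X_{1}$ \emph{alone}, and it is exactly here that the hypothesis that all components are parallel to $\{X_{1}=0\}$ (equivalently, that the $A_{\lambda}$ have pairwise disjoint $X_{1}$-projections) enters, ensuring the Chinese Remainder decomposition is compatible with the filtration by the $R_{\preceq\overline{\alpha}}$ and hence that the $d$-planes of the summands do not interfere. A secondary, purely bookkeeping point is establishing the dimension formula for $N_{\overline{\alpha}}$ for product orders once and for all.
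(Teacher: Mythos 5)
Your argument is correct, but it follows a genuinely different route from the paper's. The paper proves both inclusions in one pass, with no auxiliary lemmas and no induction on $\#Y$. For $D(A)\subset\sum_{\lambda}D(A_{\lambda})$, given $\alpha\in\mathbb{N}^{n-1}$ it forms $Y'=\{\lambda\in Y;\alpha\in D(A_{\lambda})\}$ and constructs, by Lagrange interpolation in $X_{1}$ over $Y''=Y-Y'$, an explicit $g\in I(A)$ with $\mathrm{LE}(g)=(\#Y',\alpha)$, the first point of $C(A)$ on the fibre over $\alpha$; for the reverse inclusion, given $\beta\in C(A)$, it writes $g=\phi(X_{1})\overline{X}^{p(\beta)}+h$ for some $g\in I(A)$ with $\mathrm{LE}(g)=\beta$ and shows $\phi$ must vanish on $Y'=\{\lambda;\,p(\beta)\in D(A_{\lambda})\}$, whence $\beta_{1}\geq\#Y'$. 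You instead set up the fibre-length invariant $N_{\overline{\alpha}}$ via the filtration by the $k[X_{1}]$-modules $R_{\preceq\overline{\alpha}}$ (whose preservation under normal forms is exactly where the product order enters), prove a single-fibre base case, and prove additivity of $N_{\overline{\alpha}}$ under unions with disjoint $X_{1}$-projections via a Chinese Remainder isomorphism, finishing by induction on $\#Y$. The crucial point in your CRT lemma---that the parallelism hypothesis lets the partition of unity be chosen in $k[X_{1}]$ alone, so the explicit inverse $(g',g'')\mapsto vg'+ug''$ respects the filtration---is really the conceptual content of the paper's interpolation construction, repackaged as filtration-compatibility. Your version makes the mechanism visible and cleanly separates the role of the two hypotheses (product order controls the filtration, disjoint projections control the partition of unity); the paper's version is shorter, avoids unpacking Definition~\ref{addmap} into the fibre-length picture, and exhibits the witnesses in $I(A)$ explicitly, which is useful elsewhere in the paper (the same $g=\phi\overline{X}^{p(\beta)}+h$ decomposition recurs in Theorem~\ref{inherit} and Corollary~\ref{corgeneral}). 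One minor point in your writeup: the Gr\"obner-basis claim in your first lemma is stated via a two-stage reduction (first by $X_{1}-\mu$, then by $G$); this is fine under the criterion that membership in the leading-exponent ideal follows from the existence of \emph{some} reduction of each $f\in I(B)$ to zero, but it is cleaner to argue directly on leading exponents: if $\mathrm{LE}(f)_{1}\geq 1$ then $e_{1}\leq\mathrm{LE}(f)$, and if $\mathrm{LE}(f)_{1}=0$ the product order forces $\mathrm{LE}(f(\mu,\overline{X}))=p(\mathrm{LE}(f))\in C(\overline{B})$.
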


\begin{proof}
  Take an arbitrary $\alpha=(\alpha_{2},\ldots,\alpha_{n})\in\mathbb{N}^{n-1}$ and define 
  \begin{equation*}
    Y^\prime=\{\lambda\in Y;\alpha\in D(A_{\lambda})\}\,,Y^{\prime\prime}=Y-Y^\prime\,.
  \end{equation*}
  For all $\lambda\in Y^{\prime\prime}$, let $\chi_{\lambda}\in k[X_{1}]$ be the unique polynomial such that
  \begin{itemize}
    \item $\chi_{\lambda}(\mu)=\delta_{\lambda,\mu}$ for all $\mu\in Y^{\prime\prime}$ and
    \item $\deg\chi_{\lambda}=\#Y^{\prime\prime}-1$. 
  \end{itemize}
  Thus, $\chi_{\lambda}$ is the unique interpolation polynomial taking the value $1$ 
  in $\lambda$ and the value $0$ in all other elements of $Y^{\prime\prime}$.
  
  By definition of $Y^{\prime\prime}$, for all $\lambda\in Y^{\prime\prime}$, we have $\alpha\in C(A_{\lambda})$, 
  hence there exists a monic polynomial $f_{\lambda}\in I(A_{\lambda})$ with leading exponent $\alpha$. 
  We write this polynomial as
  \begin{equation*}
    f_{\lambda}=\overline{X}^\alpha+\sum_{\beta\in\mathbb{N}^{n-1},\,\beta<\alpha}c_{\lambda,\beta}\overline{X}^\beta
  \end{equation*}
  and define
  \begin{equation*}
    \begin{split}
    f=&\overline{X}^\alpha+\sum_{\lambda\in Y^{\prime\prime}}\sum_{\beta\in\mathbb{N}^{n-1},\,\beta<\alpha}
    \chi_{\lambda}c_{\lambda,\beta}\overline{X}^\beta
    \,\,{\rm and}\\
    g=&f\prod_{\lambda\in Y^\prime}(X_{1}-\lambda)\,.
    \end{split}
  \end{equation*}
  (Note that in the definition of $f$, 
  we might as well have taken $\overline{X}^\alpha$ also into the sum over $\lambda\in Y^{\prime\prime}$,
  since $\sum_{\lambda\in Y^{\prime\prime}}\chi_{\lambda}=1$.)
  Then $g\in I(A)$, since, on the one hand, $g(\lambda,\overline{X})=0$ if $\lambda\in Y^\prime$, 
  and, on the other hand,
  $g(\lambda,\overline{X})$ is a $k$-multiple of $f_{\lambda}$ if $\lambda\in Y^{\prime\prime}$. 
  Further, we have
  \begin{equation}\label{leadexp}
    {\rm LE}(g)=(\#Y^\prime,\alpha)\,,
  \end{equation}
  as follows from the hypothesis that the term order $<$ is a product order. 
  
  For our given $\alpha$, the definition of $Y^\prime$, resp. $Y^{\prime\prime}$ implies that
  \begin{equation*}
    \begin{split}
    \,\,{\rm for}\,\,{\rm all }\,\,&\lambda\in Y^\prime\,,\#p^{-1}(p(\alpha))\cap D(A_{\lambda})=1\,\,{\rm and }\\
    \,\,{\rm for}\,\,{\rm all }\,\,&\lambda\in Y^{\prime\prime}\,,\#p^{-1}(p(\alpha))\cap D(A_{\lambda})=0\,.
    \end{split}
  \end{equation*}
  Therefore, the minimal $\alpha_{1}\in\mathbb{N}$ such that 
  $(\alpha_{1},\alpha)\in\mathbb{N}^n-\sum_{\lambda\in Y}D(A_{\lambda})$ is $\alpha_{1}=\#Y^\prime$. 
  By (\ref{leadexp}), there exists a $g\in I(A)$ whose leading exponent is $(\alpha_{1},\alpha)$. 
  Upon multiplying $g$ with an arbitrary element of $k[X_{1}]$, 
  we obtain, for all $\beta\in\mathbb{N}^n-\sum_{\lambda\in Y}D(A_{\lambda})$ such that $p(\beta)=\alpha$,
  an element of $I(A)$ with leading exponent $\beta$. 
  Since the $\alpha\in\mathbb{N}^{n-1}$ we started with was arbitrary, 
  this shows that $D(A)\subset\sum_{\lambda\in Y}D(A_{\lambda})$. 

  For the converse inclusion, i.e., $C(A)\subset\mathbb{N}^n-\sum_{\lambda\in Y}D(A_{\lambda})$,
  take an arbitrary element $\beta\in C(A)$. 
  By definition of $C(A)$, there exists a $g\in I(A)$ whose leading exponent is $\beta$. 
  We write $g$ in the following form,
  \begin{equation*}
    g=\phi(X_{1})\overline{X}^{p(\beta)}+h\,,
  \end{equation*}
  for some $\phi(X_{1})\in k[X_{1}]$, where $h\in k[X]$ collects all terms of $g$ 
  in which the powers of $\overline{X}$ are strictly smaller than $\overline{X}^{p(\beta)}$.
  For all $\lambda\in Y$, the polynomial $g_{\lambda}=g(\lambda,\overline{X})$ lies in $I(A_{\lambda})$. 
  This polynomial takes the shape
  \begin{equation*}
    g_{\lambda}=\phi(\lambda)\overline{X}^{p(\beta)}+h(\lambda,\overline{X})\,,
  \end{equation*}
  hence the leading exponent of $g_{\lambda}$ is either $p(\beta)$ (which is the case if $\phi(\lambda)\neq 0$) 
  or one of the exponents of $\overline{X}$ occurring in $h$ (which is the case if $\phi(\lambda)=0$).
  This follows from the hypothesis that the term order $<$ is a product order.

  Now we define $Y^\prime$ to be the set of all $\lambda\in Y$ such that $p(\beta)\in D(A_{\lambda})$.
  For all $\lambda\in Y^\prime$, we must have $\phi(\lambda)=0$, 
  since otherwise the leading exponent of $g_{\lambda}$, which polynomial lies in $I(A_{\lambda})$,
  would be an element of $D(A_{\lambda})$. Consider the polynomial
  \begin{equation*}
    g^\prime=\phi(X_{1})\overline{X}^{p(\beta)}\,.
  \end{equation*}
  For all $\lambda\in Y^\prime$, we have $g^\prime(\lambda,\overline{X})=0$, hence $(X_{1}-\lambda)\mid g^\prime$.
  Therefore we get a factorisation
  \begin{equation*}
    g^\prime=f\prod_{\lambda\in Y^\prime}(X_{1}-\lambda)
  \end{equation*}
  for some $f\in k[X]$. It follows that $\beta$, the leading exponent of $g^\prime$, takes the shape
  \begin{equation*}
    \beta={\rm LE}(g^\prime)={\rm LE}(f)+\#Y^\prime e_{1}\,.
  \end{equation*}
  Since, on the other hand, 
  \begin{equation*}
    \#Y^\prime=\sum_{\lambda\in Y}\#p^{-1}(p(\beta))\cap D(A_{\lambda})\,,
  \end{equation*}
  we have shown that $\beta_{1}\geq\sum_{\lambda\in Y}\#p^{-1}(p(\beta))\cap D(A_{\lambda})$, 
  hence $\beta\notin\sum_{\lambda\in Y}D(A_{\lambda})$, as claimed.
\end{proof}

Let us look at an example
which illustrates the way in which the various $D(A_{\lambda})$ are stacked on each other.

\begin{ex}\label{ex2}
  Take the lexicographic order on $\mathbb{Q}[X,Y,Z]$, with $X<Y<Z$. 
  The variety $A$ has three components $A^{(1)}$, $A^{(2)}$ and $A^{(3)}$, 
  given by the Gr\"obner bases of their ideals,
  \begin{equation*}
    \begin{split}
    I(A^{(1)})=&(X-1,Z-3)\,,\\
    I(A^{(2)})=&(X-2,Z-Y+1)\,,\\
    I(A^{(3)})=&(X-3,Y-4)\,.
    \end{split}
  \end{equation*}
  Figure \ref{ex2im1} shows the components of $A$, 
  lying in the hyperplanes $\{X=1\}$, $\{X=2\}$, $\{X=3\}$, resp.
  The minimal free variable of $A^{(1)}$ and $A^{(2)}$ is $Y$, and the minimal free variable of $A^{(3)}$ is $Z$.
  The ideal $I(A)$ is given by its Gr\"obner basis,
  \begin{equation*}
    \begin{split}
    I(A)=&(X^3-6X^2+11X-6,\\
    &YX^2-3YX+2Y-4X^2+12X-8,\\
    &ZX-3Z+YX-Y-7X+13,\\
    &2ZY-4ZX^2+12ZX-8Z-Y^2X^2+Y^2X+7YX^2-13YX)\,.
    \end{split}
  \end{equation*}
  From the Gr\"obner basis, we get $D(A)$, as depicted in Figure \ref{ex2im2}. 
  Note that 
  \begin{equation*}
    \mathbb{N}e_{2}+\mathbb{N}e_{2}+\mathbb{N}e_{3}
    =\mathbb{N}e_{2}\cup((1,0,0)+\mathbb{N}e_{2})\cup(\mathbb{N}e_{3})\cup\{(2,0,0)\}\,,
  \end{equation*}
  hence $\{(2,0,0)\}$ is the the extra cube on the axis $\mathbb{N}e_{1}$.
\end{ex}

  \begin{center}
    \begin{figure}
      \begin{picture}(400,210)
         \put(130,60){\line(-3,-2){80}}
         \put(65,5){{\footnotesize $X$}}
         \put(130,60){\line(1,0){160}}
         \put(282,47){{\footnotesize $Y$}}
         \put(130,60){\line(0,1){120}}
         \put(112,170){{\footnotesize $Z$}}
         \put(115,50){\line(1,0){150}}
         \put(115,50){\line(0,1){110}}
         \put(265,50){\line(0,1){110}}
         \put(115,160){\line(1,0){150}}
         \put(100,40){\line(1,0){150}}
         \put(100,40){\line(0,1){110}}
         \put(250,40){\line(0,1){110}}
         \put(100,150){\line(1,0){150}}
         \put(85,30){\line(1,0){150}}
         \put(85,30){\line(0,1){110}}
         \put(235,30){\line(0,1){110}}
         \put(85,140){\line(1,0){150}}
         \thicklines
         \put(165,30){\line(0,1){115}}
         \put(165,30){\line(0,-11){5}}
         \put(120,40){\line(1,1){130}}
         \put(120,40){\line(-1,-1){20}}
         \put(115,110){\line(1,0){130}}
         \put(115,110){\line(-1,0){20}}
      \end{picture}
      \caption{The variety $A$ in Example \ref{ex2}}
      \label{ex2im1}
    \end{figure}
  \end{center}

\begin{center}
\begin{figure}
  \begin{picture}(400,230)
     \put(130,60){\line(-3,-2){80}}
     \put(61,4){{\footnotesize $e_{1}$}}
     \put(130,60){\line(1,0){160}}
     \put(282,47){{\footnotesize $e_{2}$}}
     \put(130,60){\line(0,1){140}}
     \put(117,190){{\footnotesize $e_{3}$}}
     \thicklines
     \put(145,75){\line(1,0){135}}
     \put(280,75){\line(0,-1){15}}
     \put(280,75){\line(-3,-2){20}}
     \put(280,60){\line(-3,-2){20}}
     \put(125,61.5){\line(1,0){135}}
     \put(125,46.5){\line(1,0){135}}
     \put(260,61.5){\line(0,-1){15}}
     \put(145,75){\line(0,1){105}}
     \put(135,68){\line(0,1){105}}
     \put(120,68){\line(0,1){105}}
     \put(135,68){\line(3,2){10}}
     \put(135,68){\line(-1,0){15}}
     \put(135,173){\line(-1,0){15}}
     \put(135,173){\line(3,2){10}}
     \put(145,180){\line(-1,0){15}}
     \put(130,180){\line(-3,-2){10}}
     \put(125,61.5){\line(-3,-2){10}}
     \put(125,61.5){\line(0,-1){15}}
     \put(125,46.5){\line(-3,-2){10}}
     \put(120,68){\line(-3,-2){20}}
     \put(114.5,54.5){\line(0,-1){15}}
     \put(114.5,54.5){\line(-1,0){15}}
     \put(99.5,54.5){\line(0,-1){15}}
     \put(99.5,39.5){\line(1,0){15}}
  \end{picture}
\caption{The standard set of $A$ in Example \ref{ex2}}
\label{ex2im2}
\end{figure}
\end{center}

We now present an iterative version of Theorem \ref{stack}.
For this, we denote by $p_{0}$ our projection $p:\mathbb{A}^{n}\to\mathbb{A}^{n-1}$, 
and analogously, for all $i$, the projections
\begin{equation*}
  p_{i-1}:\mathbb{A}^{n-i+1}\to\mathbb{A}^{n-i}:(a_{i},\ldots,a_{n})\mapsto(a_{i+1},\ldots,a_{n})\,.
\end{equation*}

\begin{cor}\label{recursive}
  Let $A$ be a variety with $m$ irreducible components, where the $\ell$-th component, $A^{(\ell)}$, 
  is a $d$-plane with minimal free variables $\{X_{j};j\in J^{(\ell)}\}$.
  We assume that there exists a natural number $b\leq n-d$ such that for all $\ell$, we have $1,\ldots,b\notin J^{(\ell)}$
  and such that the restriction of the term order $<$ to $k[X_{i},\ldots,X_{n}]$ 
  is a product order for all $i=1,\ldots,b-1$. 
  We define 
  \begin{equation*}
    Y_{1}=p_{0}(A)\,,
  \end{equation*}
  and recursively for $i=1,\ldots,b$, and for all $\lambda_{i}\in Y_{i}(\lambda_{1},\ldots,\lambda_{i-1})$, 
  \begin{equation*}
    A_{\lambda_{1},\ldots,\lambda_{i}}=A\cap\{X_{1}=\lambda_{1},\ldots,X_{i}=\lambda_{i}\}\,,
  \end{equation*}
  and
  \begin{equation*}
    Y_{i+1}(\lambda_{1},\ldots,\lambda_{i})=p_{i}(A_{\lambda_{1},\ldots,\lambda_{i}})\,.
  \end{equation*}
  Then we have 
  \begin{equation*}
    A=\cup_{\lambda_{1}\in Y_{1}}\cup_{\lambda_{2}\in Y_{2}(\lambda_{1})}\ldots
    \cup_{\lambda_{b}\in Y_{b-1}(\lambda_{b-1})}A_{\lambda}\,,
  \end{equation*}
  where $\lambda=(\lambda_{1},\ldots,\lambda_{b})$ runs through all tuples defined recursively above. 
  Furthermore, 
  \begin{equation*}
    D(A)=\sum_{\lambda_{1}\in Y_{1}}\sum_{\lambda_{2}\in Y_{2}(\lambda_{1})}\sum_{\lambda_{b}\in Y_{b}(\lambda_{b-1})}
    D(A_{\lambda})\,,
  \end{equation*}
  in which sum we successively embed the elements of $\mathbb{N}^{i}$ into $\mathbb{N}^{i+1}$ via the map
  $(\alpha_{n-i+1},\ldots,\alpha_{n})\mapsto(0,\alpha_{n-i+1},\ldots,\alpha_{n})$, $i=b-1,\ldots,1$.
\end{cor}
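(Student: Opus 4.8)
The plan is to prove both assertions at once by induction on $b$, the number of variables to be stripped off. The case $b=0$ is vacuous, and the case $b=1$ is precisely Theorem \ref{stack}, so the work lies in the inductive step, which I would carry out by peeling off the smallest variable $X_{1}$ and then passing to the fibres $A_{\lambda_{1}}=A\cap\{X_{1}=\lambda_{1}\}$.

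The first step is to check that fixing $X_{1}$ reproduces a variety of the same type. Because $1\notin J^{(\ell)}$ for every $\ell$, the equation attached to the index $i=1$ in the system of Proposition \ref{unique} defining $A^{(\ell)}$ has an empty sum and reads $X_{1}+c_{1}^{(\ell)}=0$; hence $A^{(\ell)}$ is contained in a single hyperplane $\{X_{1}=\lambda_{1}\}$ with $\lambda_{1}\in Y_{1}=p_{0}(A)$, so that $A=\bigcup_{\lambda_{1}\in Y_{1}}A_{\lambda_{1}}$ and $A_{\lambda_{1}}$ is the union of those components $A^{(\ell)}$ with $c_{1}^{(\ell)}=\lambda_{1}$. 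Viewed in $\mathbb{A}^{n-1}={\rm Spec}\,k[X_{2},\ldots,X_{n}]$ each such $A^{(\ell)}$ is again a $d$-plane, since its surviving defining equations from Proposition \ref{unique} do not involve $X_{1}$, and its minimal free variables are still $\{X_{j};j\in J^{(\ell)}\}$: fixing the now-constant variable $X_{1}$ does not change which subsets of $\{2,\ldots,n\}$ are free variable sets of $A^{(\ell)}$, hence does not change the minimal one. Thus every $A_{\lambda_{1}}$ is a variety of the kind treated by the corollary, now in $n-1$ variables, with $1,\ldots,b-1$ absent from all its minimal free variable sets after relabelling $X_{2},\ldots,X_{n}$ as $X_{1},\ldots,X_{n-1}$, with $b-1\leq(n-1)-d$, and with the restriction of $<$ to $k[X_{i},\ldots,X_{n}]$ still a product order in the range needed.

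Next I would apply Theorem \ref{stack} to $A$ itself---legitimately, since $1\notin J^{(\ell)}$ for all $\ell$ and $<$ restricted to $k[X_{1},\ldots,X_{n}]$ is a product order---to get $D(A)=\sum_{\lambda_{1}\in Y_{1}}D(A_{\lambda_{1}})$, the sum being formed in $\overline{\mathbb{D}}_{n}$ after the embedding $\mathbb{N}^{n-1}\hookrightarrow\mathbb{N}^{n}$, $(\alpha_{2},\ldots,\alpha_{n})\mapsto(0,\alpha_{2},\ldots,\alpha_{n})$. Then I would feed in the inductive hypothesis for each $A_{\lambda_{1}}$ with $b-1$ in place of $b$; a direct comparison of definitions shows that the data of $A_{\lambda_{1}}$ are the tails of the data of $A$, namely $Y_{i+1}(\lambda_{1},\ldots,\lambda_{i})$ and $A_{\lambda_{1},\ldots,\lambda_{i+1}}$ play for $A_{\lambda_{1}}$ the roles that $Y_{i}$ and $A_{\mu_{1},\ldots,\mu_{i}}$ play for $A$, because of the way the projections $p_{i}$ and the sets $Y_{i}$ are defined. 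Substituting the expression for each $D(A_{\lambda_{1}})$ into $D(A)=\sum_{\lambda_{1}}D(A_{\lambda_{1}})$ and noting that the embeddings $\mathbb{N}^{n-b}\hookrightarrow\mathbb{N}^{n-b+1}\hookrightarrow\cdots\hookrightarrow\mathbb{N}^{n}$ then appear in exactly the nested order prescribed in the statement yields $D(A)=\sum_{\lambda_{1}}\cdots\sum_{\lambda_{b}}D(A_{\lambda})$; the decomposition $A=\bigcup\cdots A_{\lambda}$ drops out of the same induction by iterating $A=\bigcup_{\lambda_{1}}A_{\lambda_{1}}$. The finite sums $\sum_{\lambda_{m}}$ are unambiguous because the addition of Definition \ref{addmap} is commutative and associative.

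I do not expect a serious obstacle: the corollary is a formal iteration of Theorem \ref{stack}, and the real content is the bookkeeping---verifying that fixing $X_{1}$ gives a variety with the same minimal free variable sets and $b$ lowered by one, and that the recursively defined families $Y_{i}$ and $A_{\lambda_{1},\ldots,\lambda_{i}}$ restrict correctly so that the single embedding tower $\mathbb{N}^{n-b}\hookrightarrow\cdots\hookrightarrow\mathbb{N}^{n}$ emerges. The one point that genuinely needs a line of care is that each of the successive applications of Theorem \ref{stack} is admissible, i.e.\ that $<$ restricted to $k[X_{i},\ldots,X_{n}]$ is a product order at every stage at which a variable is peeled off; this is exactly what the product-order hypothesis built into the corollary is there to provide.
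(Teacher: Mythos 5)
Your proposal is correct and is essentially the paper's own argument: the paper proves the corollary in one line by invoking Theorem \ref{stack} together with induction (base case $b=1$ being Theorem \ref{stack} itself), which is exactly your induction on $b$ peeling off $X_{1}$. The bookkeeping you supply—fibres $A_{\lambda_{1}}$ having the same minimal free variables, the product-order hypotheses at each stage, and the nesting of the embeddings—is just the detail the paper leaves implicit.
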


\begin{proof}
  Apply Theorem \ref{stack} and induction over $n$. 
  For $b=1$, the assertion is identical to that of Theorem \ref{stack}.
\end{proof}

The standard set can be made completely explicit in the following class of cases.

\begin{cor}\label{corlex}
  Let $A$ be a variety as introduced above, such that
  \begin{enumerate}
    \item[(a)] either each $A_{\lambda}$ is a $d$-plane in $\mathbb{A}^{n-b}$, 
    with minimal free variables $\{X_{j};j\in J_{\lambda}\}$, where $J_{\lambda}\subset\{b+1,\ldots,n\}$,
    \item[(b)] or $d=n-b-1$ and each $A_{\lambda}$ is a union of $d$-planes in $\mathbb{A}^{n-b}$.
    In this case, we assume that for all $\lambda$ and all $i=b+1,\ldots,n$, 
    the variety $A_{\lambda}$ has $m_{\lambda,i}$ irreducible components with minimal free variables
    $\{X_{j};j\in\{b+1,\ldots,n\}-\{i\}\}$.
  \end{enumerate}
  Then $D(A_{\lambda})$ is given by 
  \begin{equation*}
    D(A)=\sum_{\lambda_{1}\in Y_{1}}\sum_{\lambda_{2}\in Y_{2}(\lambda_{1})}\sum_{\lambda_{b}\in Y_{b}(\lambda_{b-1})}
    (\oplus_{j\in J_{\lambda}}\mathbb{N}e_{j})
  \end{equation*}
  in case (a), resp. 
  \begin{equation*}
    D(A)=\sum_{\lambda_{1}\in Y_{1}}\sum_{\lambda_{2}\in Y_{2}(\lambda_{1})}\sum_{\lambda_{b}\in Y_{b}(\lambda_{b-1})}
    (\cup_{i=b+1}^n(\cup_{\ell=0}^{m_{\lambda,i}-1}(\ell e_{i}+\oplus_{j\in\{b+1,\ldots,n\}-\{i\}}\mathbb{N}e_{j})))
  \end{equation*}
  in case (b).
\end{cor}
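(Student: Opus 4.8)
The plan is to derive the formula directly from Corollary~\ref{recursive} by evaluating the innermost summands $D(A_{\lambda})$ explicitly. First I would note that ``a variety as introduced above'' means a variety satisfying the hypotheses of Corollary~\ref{recursive}: there is a natural number $b\leq n-d$ with $1,\ldots,b\notin J^{(\ell)}$ for every $\ell$, and the restriction of $<$ to $k[X_{i},\ldots,X_{n}]$ is a product order for $i=1,\ldots,b-1$. Hence Corollary~\ref{recursive} applies and gives
\begin{equation*}
  D(A)=\sum_{\lambda_{1}\in Y_{1}}\sum_{\lambda_{2}\in Y_{2}(\lambda_{1})}\cdots\sum_{\lambda_{b}\in Y_{b}(\lambda_{b-1})}D(A_{\lambda})\,,
\end{equation*}
where each $D(A_{\lambda})\subset\mathbb{N}^{n-b}$ is computed with respect to the restriction of $<$ to $k[X_{b+1},\ldots,X_{n}]$, and the successive embeddings $\mathbb{N}^{i}\hookrightarrow\mathbb{N}^{i+1}$, $(\alpha_{n-i+1},\ldots,\alpha_{n})\mapsto(0,\alpha_{n-i+1},\ldots,\alpha_{n})$, carry a subset of $\mathbb{N}^{n-b}$ into $\mathbb{N}^{n}$ by prepending $b$ zeros. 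It therefore suffices to identify $D(A_{\lambda})$ in each of the cases (a), (b) and to substitute; since the sets that arise are supported on the coordinates $b+1,\ldots,n$, this substitution is purely formal.

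Next I would compute $D(A_{\lambda})$ in the two cases. In case (a), each $A_{\lambda}$ is a single $d$-plane in $\mathbb{A}^{n-b}$, hence irreducible, so $D(A_{\lambda})=E(A_{\lambda})$ as noted after Example~\ref{ex1}; applying Theorem~\ref{number} to $A_{\lambda}$ (one component, minimal free variables $\{X_{j};j\in J_{\lambda}\}$ with $J_{\lambda}\subset\{b+1,\ldots,n\}$) shows that $E(A_{\lambda})$ is the unique $d$-plane parallel to $\oplus_{j\in J_{\lambda}}\mathbb{N}e_{j}$. Writing it as $\gamma+\oplus_{j\in J_{\lambda}}\mathbb{N}e_{j}$ with $\gamma_{j}=0$ for $j\in J_{\lambda}$ and using that $D(A_{\lambda})\in\mathbb{D}_{n-b}$ is an order ideal containing the origin (as $A_{\lambda}\neq\emptyset$), I conclude $\gamma=0$, i.e. $D(A_{\lambda})=\oplus_{j\in J_{\lambda}}\mathbb{N}e_{j}$. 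In case (b), $d=n-b-1$, so $A_{\lambda}$ is a union of affine hyperplanes in $\mathbb{A}^{n-b}$ with $m_{\lambda,i}$ components whose minimal free variables are $\{X_{j};j\in\{b+1,\ldots,n\}-\{i\}\}$; this is exactly the configuration treated in the discussion of the case $d=n-1$ above, and formula~(\ref{cup}), applied in $\mathbb{A}^{n-b}$ with $\{1,\ldots,n\}$ replaced by $\{b+1,\ldots,n\}$, yields
\begin{equation*}
  D(A_{\lambda})=\bigcup_{i=b+1}^{n}\Bigl(\bigcup_{\ell=0}^{m_{\lambda,i}-1}\bigl(\ell e_{i}+\oplus_{j\in\{b+1,\ldots,n\}-\{i\}}\mathbb{N}e_{j}\bigr)\Bigr)\,.
\end{equation*}
Plugging the two expressions for $D(A_{\lambda})$ into the displayed formula for $D(A)$ gives the two asserted identities.

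I do not expect a genuine obstacle: the corollary is essentially a bookkeeping consequence of Corollary~\ref{recursive}, the content being only that hypotheses (a) and (b) single out the two situations in which $D(A_{\lambda})$ is already known --- via Theorem~\ref{number} together with irreducibility in case (a), and via formula~(\ref{cup}) in case (b). The one point to handle with a little care is that formula~(\ref{cup}) is being invoked for the restricted term order on $k[X_{b+1},\ldots,X_{n}]$, not for a product order; this is legitimate because (\ref{cup}) holds for any term order with $X_{b+1}<\cdots<X_{n}$ (the ideal of $A_{\lambda}$ is generated by the product of the defining linear forms, whose leading exponents are forced), whereas the product-order hypotheses enter only where Corollary~\ref{recursive}, through Theorem~\ref{stack}, requires them, namely for the nesting of the sums.
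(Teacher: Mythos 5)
Your proof is correct and matches the paper's approach: the paper's own proof consists of exactly the two observations you make, namely that case (a) follows from Corollary~\ref{recursive} together with the direct identification of $D(A_{\lambda})$ for a single $d$-plane, and case (b) from Corollary~\ref{recursive} together with formula~(\ref{cup}). Your extra care in pinning down $\gamma=0$ via the order-ideal property and in noting that (\ref{cup}) does not require a product order fills in detail the paper leaves implicit, but it is the same argument.
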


\begin{proof}
  The assertion for case (a) follows directly from Corollary \ref{recursive}.
  The assertion for case (b) follows from Corollary \ref{recursive} and the special case $d=n-1$ 
  discussed in the context of equation (\ref{cup}).
\end{proof}

\section{Linear families of planes in affine space}\label{families}

Let $A$ be a variety with $m$ irreducible components, 
where the $\ell$-th component $A^{(\ell)}$ is a $d$-plane with minimal free variables $\{X_{j};j\in J^{(\ell)}\}$. 
We assume that for all $\ell$, we have $1\in J^{(\ell)}$.
It is easy to see that even if all components of $A$ pass through the origin of $\mathbb{A}^n$,
the irreducible components of the variety $A_{\lambda}$ will be affine, and not linear, 
$(d-1)$-planes in $\mathbb{A}^{n-1}$. This is the motivation, announced in the previous section, 
for taking assertion $\mathcal{A}(d-1,n-1)$ just as seriously as assertion $\mathcal{L}(d-1,n-1)$.

For all $\lambda\in Y$, let $\delta_{\lambda}=D(A_{\lambda})\in\mathbb{D}_{n-1}$ 
be the standard set of $A_{\lambda}$ w.r.t. the restriction of $<$ to $k[\overline{X}]$. 
For varying $\lambda$, the invariant $\delta_{\lambda}$ will in general take different values. 
Yet generically, the invariant $\delta_{\lambda}$ is constant. This will be proved in Proposition \ref{open} below.
Therefrom, we will derive Theorem \ref{inherit}, 
which states in particular that $\mathbb{N}e_{1}\oplus\delta$ is a subset of $D(A)$.
For proving Proposition \ref{open},
we have to describe the family $(A_{\lambda})_{\lambda\in\mathbb{A}^1}$ as a morphism of schemes.

Let $B=k[X_{1}]$ be the coordinate ring of $\mathbb{A}^1$. The ideal 
\begin{equation*}
  I=I(A)=\{f\in k[X];f(a)=0\,,\forall a\in A\}
\end{equation*}
defining $A$ as a subvariety of $\mathbb{A}^n$ can also be understood as an ideal
\begin{equation}\label{otheri}
  I=\{f\in B[\overline{X}];f(a)=0\,,\forall a\in A\}
\end{equation}
in the ring $B[\overline{X}]=B[X_{2},\ldots,X_{n}]$. 
Therefore, the canonical map $B\to B[\overline{X}]/I$ yields a morphism of affine schemes,
\begin{equation*}
  \phi:A={\rm Spec}\,(B[\overline{X}]/I)\to{\rm Spec}\,(B)=\mathbb{A}^1\,.
\end{equation*}
The underlying space of the fibre of $\phi$ in the point $\lambda\in\mathbb{A}^1$ 
is precisely the intersection $A_{\lambda}$ defined above. In this sense, 
the affine morphism $\phi$ is nothing but the family of affine varieties $(A_{\lambda})_{\lambda\in\mathbb{A}^1}$.

For proving Proposition \ref{open} below, we have to give a short overview over the main objects of \cite{wibmer}.
This article deals exactly with situations like the one we encounter here, 
but for more general rings $B$ and ideals $I\subset B[\overline{X}]$.
More precisely, in \cite{wibmer}, the ring $B$ is an arbitrary noetherian and reduced ring, 
and $I$ is an arbitrary ideal in the polynomial ring $B[\overline{X}]$. 
For the time being, let us describe this more general situation; 
afterwards, we will return to our particular problem. The fibres of the morphism 
\begin{equation*}
  \phi:{\rm Spec}\,(B[\overline{X}]/I)\to{\rm Spec}\,(B)\,,
\end{equation*}
and the Gr\"obner bases of these fibres, are the objects of study in \cite{wibmer}.
The aim of this article is to decompose the parameter space ${\rm Spec}\,(B)$ in such a way that
on each part $Y$ of the decomposition, the Gr\"obner bases of the fibres $\phi^{-1}(\mathfrak{p})$, 
where $\mathfrak{p}$ runs through $Y$,
come from a finite set of global sections of a certain quasi-coherent sheaf $\mathcal{I}_{Y}$ on $Y$. 
The fibres of $\phi$ can be described in the following way. For each prime ideal $\mathfrak{p}$ in $B$, 
denote by $B\to k(\mathfrak{p})$ the canonical map to the residue field. 
This map induces a homomorphism $\sigma_{\mathfrak{p}}:B[\overline{X}]\to k(\mathfrak{p})[\overline{X}]$. 
The fibre $\phi^{-1}(\mathfrak{p})$ is the subvariety of $\mathbb{A}^{n-1}_{k(\mathfrak{p})}$ defined 
by the ideal $(\sigma_{\mathfrak{p}}(I))$ in $k(\mathfrak{p})[\overline{X}]$.
Given any term order on the set of monimials in $\overline{X}$, 
we can compute leading terms, exponents, Gr\"obner bases etc. 
over $B[\overline{X}]$, and over all $k(\mathfrak{p})[\overline{X}]$. 
In particular, the standard set $D(\phi^{-1}(\mathfrak{p}))$ is a well-defined object
for each $\mathfrak{p}\in{\rm Spec}\,(B)$. We denote this set by $\delta_{\mathfrak{p}}$. 

The key technique of \cite{wibmer} is to define the quasi-coherent sheaf $\mathcal{I}_{Y}$
on each locally closed part $Y$ of ${\rm Spec}\,(B)$. This sheaf is defined as follows. 
Let $\mathfrak{a}\subset B$ be the ideal defining the closure $\overline{Y}$ of $Y$ in ${\rm Spec}\,(B)$, 
and let $\overline{I}$ be the image of $I$ in $(B/\mathfrak{a})[\overline{X}]$. 
The set $\overline{I}$ is clearly a $B/\mathfrak{a}$-module, hence defines a quasi-coherent sheaf on 
${\rm Spec}\,(B/\mathfrak{a})=\overline{Y}$. 
Now $\mathcal{I}_{Y}$ is defined to be the restriction of this quasi-coherent sheaf to $Y$.
A section $g$ of $\mathcal{I}_{Y}$ over $Y$ is a function which is locally, on an open $U$ in $Y$, 
a fraction $g=P/s$, where $P\in\overline{I}$ and $s\in(B/\mathfrak{a})-\mathfrak{q}$, for all $\mathfrak{q}\in U$. 
In particular, each section $g\in\mathcal{I}_{Y}(Y)$ can be reduced modulo $\mathfrak{p}$, 
for all $\mathfrak{p}\in Y$. We denote the reduction by $\overline{g}^{\mathfrak{p}}$. 
This is an element of the ideal $(\sigma_{\mathfrak{p}}(I))\subset k(\mathfrak{p})[\overline{X}]$.

Now that we have given an overview of the basic objects of \cite{wibmer}, 
we can apply the main result of this paper to our situation.
We return to $B=k[X_{1}]$ and $I$ as in (\ref{otheri}). In this situation,
the following proposition follows easily from Theorem 11 of \cite{wibmer}.

\begin{pro}\label{open}
  There exists an open part $U$ of $\mathbb{A}^1$ such that $\delta_{\lambda}\in\mathbb{D}_{n-1}$ 
  is constant for all $\lambda\in U$. 
\end{pro}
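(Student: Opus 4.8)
The plan is to deduce Proposition~\ref{open} directly from Theorem~11 of \cite{wibmer}, which we apply to the Noetherian reduced ring $B=k[X_{1}]$ and the ideal $I\subset B[\overline{X}]$ of (\ref{otheri}). The theorem of \cite{wibmer} produces a decomposition of $\mathrm{Spec}\,(B)=\mathbb{A}^{1}$ into finitely many locally closed parts $Y_{1},\ldots,Y_{r}$ such that, on each $Y_{s}$, the Gr\"obner bases of the fibres $\phi^{-1}(\mathfrak{p})$ for $\mathfrak{p}\in Y_{s}$ come from a finite set of global sections of the quasi-coherent sheaf $\mathcal{I}_{Y_{s}}$ described above. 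First I would recall that since $B=k[X_{1}]$ is a principal ideal domain of dimension~$1$, such a decomposition must contain exactly one part $Y_{s_{0}}$ whose closure is all of $\mathbb{A}^{1}$; this $Y_{s_{0}}$ is a nonempty Zariski-open subset $U\subset\mathbb{A}^{1}$, and every other part of the decomposition is a finite set of closed points, hence contained in $\mathbb{A}^{1}-U$.

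Next I would argue that the standard set $\delta_{\lambda}=D(A_{\lambda})$ is constant for $\lambda\in U$. On $U$, by the cited theorem, there is a finite family of sections $g_{1},\ldots,g_{t}\in\mathcal{I}_{U}(U)$ such that for every $\mathfrak{p}\in U$ the reductions $\overline{g_{1}}^{\mathfrak{p}},\ldots,\overline{g_{t}}^{\mathfrak{p}}$ form a Gr\"obner basis of $(\sigma_{\mathfrak{p}}(I))\subset k(\mathfrak{p})[\overline{X}]$; here the residue field $k(\mathfrak{p})$ at a closed point $\lambda\in U$ is a finite extension of $k$, and the fibre $\phi^{-1}(\lambda)$ is the base change of $A_{\lambda}$ to $k(\lambda)$, which has the same standard set. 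The leading exponent of each reduction $\overline{g_{i}}^{\mathfrak{p}}$ is the leading exponent of $g_{i}$ as a section, provided the leading coefficient of $g_{i}$ does not vanish at $\mathfrak{p}$; after shrinking $U$ further to the complement of the finitely many zeros of these leading coefficients (a finite set, since each leading coefficient is a nonzero element of a localization of $B=k[X_{1}]$), the set $\{\mathrm{LE}(g_{1}),\ldots,\mathrm{LE}(g_{t})\}$ generates $C(\phi^{-1}(\mathfrak{p}))$ for all $\mathfrak{p}$ in the shrunk $U$. Since this generating set is independent of $\mathfrak{p}$, so is $C(\phi^{-1}(\mathfrak{p}))$, and therefore so is its complement $\delta_{\mathfrak{p}}$; specialising to closed points gives $\delta_{\lambda}$ constant on $U$, as claimed.

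The main obstacle I anticipate is purely bookkeeping: matching the general framework of \cite{wibmer} — arbitrary Noetherian reduced $B$, primes $\mathfrak{p}$, the sheaf $\mathcal{I}_{Y}$ and its sections $g=P/s$ — to the very concrete situation $B=k[X_{1}]$, closed points $\lambda$, and genuine subvarieties $A_{\lambda}$ of $\mathbb{A}^{n-1}$. In particular one must check that the ``open stratum'' in the \cite{wibmer} decomposition really does carry a single constant standard set after the extra shrinking, and that passing from the generic fibre over the function field $k(X_{1})$ to the closed fibres over $\lambda\in U$ does not change $D$; both points are routine once one observes that specialisation of a Gr\"obner basis with non-vanishing leading coefficients preserves the set of leading exponents, and that base change to a finite field extension leaves the standard set unchanged. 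No genuinely new idea beyond the correct invocation of \cite{wibmer} is needed.
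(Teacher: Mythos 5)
Your proof is correct and takes the same route as the paper, which at this point simply asserts that the proposition ``follows easily from Theorem 11 of \cite{wibmer}'' without supplying the argument. You have spelled out exactly what that deferral amounts to: since $B=k[X_{1}]$ is an integral domain of dimension one, the Gr\"obner cover of ${\rm Spec}\,B$ produced by \cite{wibmer} has a unique stratum containing the generic point, which is therefore open dense, and on this stratum (after discarding the finitely many zeros of the leading coefficients of the finitely many defining sections) the set of leading exponents — hence the standard set $\delta_{\lambda}$ — is constant.
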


In this sense, the standard set $\delta$ of the proposition is the {\it generic} standard set of the family 
$(A_{\lambda})_{\mathbb{A}^1}$. This generic $\delta$ will be used throughout the rest of the paper. 
In Theorem (\ref{inherit}) below, we will show that the ``cuboid'' 
\begin{equation*}
  \mathbb{N}e_{1}\oplus\delta=\{\alpha\in\mathbb{N}^n;\alpha_{1}\in\mathbb{N},p(\alpha)\in\delta\} 
\end{equation*}
over $\delta$ is contained in $D(A)$.
For doing so, we will need a lemma, which in turn requires the following class of polynomials:

\begin{lmm}\label{fbeta}
  Let $I$ be an ideal in $k[X]$ and $\beta\in C(I)$. Then there exists a unique $f_{\beta}\in I$ such that
  \begin{itemize}
    \item $f_{\beta}$ is monic,
    \item ${\rm LE}\,f_{\beta}=\beta$, and
    \item all nonleading exponents of $f_{\beta}$ lie in $D(A)$.
  \end{itemize}
  Furthermore, the collection of all $f_{\beta}$, where $\beta$ runs through $C(A)$, is a $k$-basis of $I$.
\end{lmm}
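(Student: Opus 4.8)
The statement has two parts: first, the existence and uniqueness of the polynomial $f_{\beta}$ with the three listed properties; second, the fact that the family $\{f_{\beta}\}_{\beta\in C(I)}$ is a $k$-basis of $I$. I would treat these in order, since the basis statement is really a bookkeeping consequence of the first part together with the standard fact that the standard monomials $\{\overline{X}^{\alpha}\}_{\alpha\in D(I)}$ descend to a $k$-basis of $k[X]/I$.

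\textbf{Existence and uniqueness of $f_{\beta}$.} Fix $\beta\in C(I)$. By definition of $C(I)$ there is some $g\in I$ with ${\rm LE}(g)=\beta$; scaling, we may take $g$ monic. Now I would reduce $g$ modulo $I$ in the only sense available here: subtract off, one at a time, the contributions of those nonleading terms $c_{\gamma}\overline{X}^{\gamma}$ of $g$ whose exponent $\gamma$ lies in $C(I)$. Concretely, if $\gamma\in C(I)$ is the $<$-largest such exponent occurring in $g$, pick a monic $h\in I$ with ${\rm LE}(h)=\gamma$ and replace $g$ by $g-c_{\gamma}h$. Every new term introduced has exponent $<\gamma$, so this process strictly decreases the $<$-largest ``bad'' exponent; since $<$ is a well-order on $\mathbb{N}^n$, the process terminates, leaving a monic $f_{\beta}\in I$ with ${\rm LE}(f_{\beta})=\beta$ and all nonleading exponents in $D(I)$. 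For uniqueness: if $f_{\beta}$ and $f_{\beta}'$ both satisfy the three properties, then $f_{\beta}-f_{\beta}'\in I$ has all its exponents in $D(I)$ (the leading terms cancel because both are monic of leading exponent $\beta$, and the remaining exponents are in $D(I)$ by hypothesis). But a nonzero element of $I$ all of whose exponents lie in $D(I)$ would have its leading exponent simultaneously in $C(I)$ and $D(I)$, which is absurd; hence $f_{\beta}=f_{\beta}'$.

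\textbf{The basis statement.} Linear independence is immediate from distinct leading exponents: a nontrivial $k$-linear relation $\sum_{\beta\in S}a_{\beta}f_{\beta}=0$ with $S\subset C(I)$ finite and some $a_{\beta}\neq 0$ would have leading exponent equal to $\max_{<}\{\beta\in S: a_{\beta}\neq 0\}$, contradicting that the sum is $0$. For spanning: take any $f\in I$, $f\neq 0$. If ${\rm LE}(f)=\beta\in C(I)$, then $f$ and its leading coefficient times $f_{\beta}$ have the same leading term, so $f - (\text{lc}\,f)\, f_{\beta}\in I$ has strictly smaller leading exponent; iterating and again invoking that $<$ is a well-order, $f$ is written as a finite $k$-linear combination of the $f_{\beta}$. (Alternatively, and perhaps cleaner to write: every $f\in k[X]$ is congruent mod $I$ to a unique $k$-linear combination of standard monomials $\overline{X}^{\alpha}$, $\alpha\in D(I)$, by the division/normal-form property of $D(I)\in\mathbb{D}_{n}$; applying this to each nonleading term of a would-be expansion shows the $f_{\beta}$ span $I$.) This completes the proof.

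\textbf{Anticipated obstacle.} There is no deep obstacle: the entire argument is the classical reduction/normal-form machinery behind Gr\"obner escaliers, phrased without explicitly invoking a Gr\"obner basis. The one point requiring a little care is the termination of the reduction processes in both parts; this is where I use that the term order $<$ is a well-order on $\mathbb{N}^n$, so that the strictly decreasing sequence of ``largest bad exponents'' (resp. of leading exponents) must stabilize. The other mild subtlety is making sure, in the uniqueness argument, that the difference $f_{\beta}-f_{\beta}'$ genuinely has \emph{all} exponents in $D(I)$ — which it does precisely because both polynomials are monic with the same leading exponent, so the cancellation is exact at $\beta$ and nothing of exponent $\geq\beta$ survives.
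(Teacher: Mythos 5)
Your proof is correct. A small note on the route: the paper does not write out a full proof of this lemma but only sketches one, proposing an induction over the elements of $C(I)$ in the style of the proof of Lemma \ref{rational}, anchored at the \emph{corners} of $C(I)$ (the $\beta$ with $\beta-e_i\notin C(I)$ for all $i$), where $f_\beta$ is taken to be the corresponding element of the reduced Gr\"obner basis; for a non-corner $\beta=\beta'+e_i$ one then corrects $X_i f_{\beta'}$ by subtracting multiples of already-constructed $f_\gamma$ with $\gamma<\beta$. Your argument is a direct normal-form reduction: start from any monic $g\in I$ with leading exponent $\beta$ and iteratively kill the $<$-largest nonleading exponent lying in $C(I)$, terminating because a term order is a well-order on $\mathbb{N}^n$. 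Both arguments are the same reduction machinery viewed from opposite ends; yours is slightly more self-contained, needing nothing about reduced Gr\"obner bases, while the paper's version has the incidental benefit of identifying the $f_\beta$ at corners with the reduced Gr\"obner basis elements, a fact it reuses in the proof of Lemma \ref{rational}. Your uniqueness argument (the difference of two candidates is in $I$ but has all exponents in $D(I)$, hence is zero) and the two arguments you give for spanning are both standard and correct.
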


Since this lemma is apparently well--known to experts, we skip its proof; 
however, we could not find a reference for it in the literature.
(A way of proving the kemma is to use induction over the elements of $C(A)$, 
in a similar fashion as is used in the proof of Lemma \ref{rational} below. 
If $\beta$ is the minimal element of $C(A)$, or more generally, 
a {\it corner} of $C(A)$, as defined in the proof of Lemma \ref{rational} below, 
the polynomial $f_{\beta}$ is the unique element of the reduced Gr\"obner basis with leading exponent $\beta$.)
Note that the polynomials $f_{\beta}$ are interesting in their own right, 
forming ``the'' canonical basis of the $k$-vector space $I$.

\begin{lmm}\label{rational}
  Let $I^{(\ell)}$, $\ell=1,\ldots,m$, be ideals in $k[X]$ and $I=\cap_{\ell=1}^mI^{(\ell)}$. 
  Assume that $I^{(\ell)}$ is generated by polynomials 
  \begin{equation*}
    f^{(\ell)}_{b}=\sum_{\beta\in\mathbb{N}^n}c^{(\ell)}_{b,\beta}X^{\beta}\,,
  \end{equation*}
  where $b$ runs through some indexing set $B^{(\ell)}$.
  Then the coefficients of all $f_{\beta}\in I$ are $\mathbb{Z}$-rational functions in the parameters 
  $c^{(\ell)}_{b,\beta}$, for $\ell=1,\ldots,m$, $b\in B^{(\ell)}$, $\beta\in\mathbb{N}^n$.
  
  In particular, let $\overline{A}$ be a variety in $\mathbb{A}^{n-1}={\rm Spec}\,k[\overline{X}]$ with $m$ components,
  where the $\ell$-th component is an affine $(d-1)$-plane defined by equations
  \begin{equation}\label{linearbar}
    X_{i}+\sum_{j\in J^{(\ell)},j<i}\overline{b}^{(\ell)}_{i,j}X_{j}+\overline{c}^{(\ell)}_{j}=0\,,
  \end{equation}
  for $i\in\{2,\ldots,n\}-J^{(\ell)}$, as in Proposition \ref{unique}.
  Then the coefficients of all $f_{\beta}\in I(\overline{A})$ are $\mathbb{Z}$-rational functions in the parameters 
  $\overline{b}^{(\ell)}_{i,j}$ and $\overline{c}^{(\ell)}_{j}$.
\end{lmm}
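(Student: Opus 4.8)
The plan is to reduce Lemma~\ref{rational} to the single principle that a Gr\"obner basis, and the normal-form reduction it produces, depend on the coefficients of the input generators only through the four field operations, introducing no field constants beyond integers; hence everything eventually computed from the $c^{(\ell)}_{b,\beta}$ lands in the ring of fractions that these parameters generate over $\mathbb{Z}$ inside $k$, i.e.\ is the value of a $\mathbb{Z}$-rational function of the parameters. One caveat must be carried along: the rational expressions one writes down are those valid on the locally closed stratum of parameter space on which the leading-exponent combinatorics agree with the given point --- that is, on which the monomial ideals $C(I^{(\ell)})$ and $C(I)$, and the ranks of the linear systems appearing below, are constant. Since Lemma~\ref{rational} is a pointwise assertion --- $\beta$ ranging over the actual $C(I)$, the $c^{(\ell)}_{b,\beta}$ being fixed elements of $k$ --- this is exactly the amount of uniformity required, and it is all the intended application needs, where the $c^{(\ell)}_{b,\beta}$ are further specialised to rational functions of the parameter $\lambda$.

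Concretely I would proceed in two steps. First, a set of generators of $I=\cap_{\ell}I^{(\ell)}$ is obtained from the $f^{(\ell)}_{b}$ by iterated use of the elimination formula $I^{(\ell_{1})}\cap I^{(\ell_{2})}=(t f^{(\ell_{1})}_{b},\,(1-t)f^{(\ell_{2})}_{b'})\cap k[X]$, the right-hand side being computed by Buchberger's algorithm for an elimination order for $t$, which only ever forms $S$-polynomials and reduces; one further Buchberger-plus-interreduction run yields the reduced Gr\"obner basis $g_{1},\ldots,g_{s}$ of $I$, with leading exponents $\gamma_{1},\ldots,\gamma_{s}$, so the coefficients of the $g_{i}$ are $\mathbb{Z}$-rational in the $c^{(\ell)}_{b,\beta}$. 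Second, I build the canonical basis element $f_{\beta}$ of Lemma~\ref{fbeta} by induction over $\beta\in C(I)$ with respect to $<$, in the manner the proof hint suggests: if $\beta=\gamma_{i}$ then $f_{\beta}=g_{i}$, which is monic, reduced, and has leading exponent $\gamma_{i}$; otherwise $\beta=\gamma_{i}+\mu$ with $\mu\neq 0$, and $X^{\mu}g_{i}\in I$ is monic with leading exponent $\beta$, each of its nonleading exponents being $<\beta$, hence either in $D(I)$ or equal to some $\nu\in C(I)$ for which $f_{\nu}$ is available by induction. Subtracting the appropriate scalar multiples of these $f_{\nu}$ --- a process that terminates because $<$ strictly decreases and which replaces a bad monomial only by $D(I)$-monomials of smaller exponent --- produces $f_{\beta}$. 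Every operation multiplies by a monomial or subtracts a scalar multiple of an already-built polynomial, so the coefficients of $f_{\beta}$ are $\mathbb{Z}$-rational in those of the $g_{i}$, hence in the $c^{(\ell)}_{b,\beta}$.

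For the ``in particular'' clause I specialise this machinery. By the computation preceding Proposition~\ref{unique} the leading monomial of each equation in~(\ref{linearbar}) is $X_{i}$ (because $i>j$ for every index $j$ occurring, and $X_{i}>1$), so these equations --- whose leading terms $X_{i}$, $i\in\{2,\ldots,n\}-J^{(\ell)}$, are pairwise coprime --- are \emph{already} the reduced Gr\"obner basis of the ideal $I(\overline{A}^{(\ell)})$ of the $\ell$-th component, with standard set $\oplus_{j\in J^{(\ell)}}\mathbb{N}e_{j}$. Thus no Buchberger computation is needed on the $I^{(\ell)}$-side: the normal-form map ${\rm NF}^{(\ell)}$ is computed by substituting, for each $i\in\{2,\ldots,n\}-J^{(\ell)}$, the right-hand side of the corresponding equation in~(\ref{linearbar}) for $X_{i}$ --- a substitution already landing in the span of the standard monomials $\{X^{\gamma};\gamma\in\oplus_{j\in J^{(\ell)}}\mathbb{N}e_{j}\}$ --- so the coefficients of ${\rm NF}^{(\ell)}(X^{\gamma})$ are $\mathbb{Z}$-polynomials, with no division, in the $\overline{b}^{(\ell)}_{i,j}$ and $\overline{c}^{(\ell)}_{j}$. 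Since $I(\overline{A})=\cap_{\ell}I(\overline{A}^{(\ell)})$ one may now invoke the general statement; more cleanly, write $f_{\beta}=X^{\beta}+\sum_{\alpha}\lambda_{\alpha}X^{\alpha}$ with $\alpha$ running over the elements of $D(I(\overline{A}))$ below $\beta$, and read off from $f_{\beta}\in I(\overline{A})$, equivalently ${\rm NF}^{(\ell)}(f_{\beta})=0$ for all $\ell$, a linear system $M\lambda=v$ whose matrix and right-hand side have the coefficients just described as entries. By Lemma~\ref{fbeta} this system is consistent with a unique solution, so Cramer's rule applied to a maximal invertible square subsystem expresses each $\lambda_{\alpha}$ as a quotient of determinants of matrices with $\mathbb{Z}$-polynomial entries in the $\overline{b},\overline{c}$, hence as a $\mathbb{Z}$-rational function of them.

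The step I expect to be the real obstacle is making the first of these --- ``generators of an intersection of ideals depend rationally on the generators of the pieces'' --- precise, since Buchberger's algorithm branches on whether various coefficient expressions vanish, so the rational formulas it yields are valid only on a fixed combinatorial stratum rather than globally on parameter space, and one must be careful to phrase the conclusion accordingly. In the linear case relevant to the application this difficulty is much milder: the $I^{(\ell)}$ need no Gr\"obner computation at all, and the only stratum-dependence that remains is in the set $C(I(\overline{A}))$ itself and in the choice of invertible square subsystem for Cramer's rule --- neither of which obstructs the pointwise conclusion actually claimed.
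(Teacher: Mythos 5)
Your proposal is correct and follows essentially the same route as the paper: obtain a Gr\"obner basis of $I=\cap_{\ell}I^{(\ell)}$ via the $t$-trick and elimination, observe that Buchberger's algorithm (S-polynomials and reductions) preserves $\mathbb{Z}$-rationality of coefficients, and then construct $f_{\beta}$ by induction over $\beta\in C(I)$ starting from the reduced Gr\"obner basis, deducing the second statement from the first via $I(\overline{A})=\cap_{\ell}I(\overline{A}^{(\ell)})$. Your extra observations (the stratum-dependence caveat and the Cramer's-rule aside, the latter needing a finite set of candidate standard monomials below $\beta$ to be fully rigorous) are embellishments rather than a different argument.
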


\begin{proof}
  The second part of the lemma is an immediate consequence of the first, 
  since $I(\overline{A})=\cap_{\ell=1}^mI(\overline{A}^{(\ell)})$,
  where $I(\overline{A}^{(\ell)})$ is the ideal generated by the polynomials in (\ref{linearbar}).
  
  For proving the first part of the lemma, we make the following reduction:
  It suffices to prove the statement for all elements of the reduced Gr\"obner basis of $I$. 
  Note that the reduced Gr\"obner basis of $I$ is the set of all $f_{\beta}$, 
  where $\beta$ runs through the {\it corners} of $C(I)$,
  i.e. all $\beta\in C(I)$ such that for all $i$, we have $\beta-e_{i}\notin C(I)$.
  Assume that the statement is shown for all $f_{\beta}$ in the reduced Gr\"obner basis.
  We prove that the statement is true for all $f_{\beta}$, where $\beta\in C(A)$, by induction over $\beta\in C(I)$. 
  If $\beta$ is the minimal element of $C(I)$, or more generally, a corner of $C(I)$,
  then $f_{\beta}$ is an element of the reduced Gr\"obner basis,
  and the statement is true for $f_{\beta}$ by assumption.
  If $\beta$ is a nonminimal element of $C(A)$, and more specifically, not a corner of $C(I)$, 
  then there exists an $i$ such that $\beta^\prime=\beta-e_{i}$ also lies in $C(I)$. 
  Thus in particular $\beta^\prime<\beta$, and by our induction hypothesis, 
  we may assume that the statement is true for the polynomial $f_{\beta^\prime}$. 
  We write this polynomial in the following form,
  \begin{equation*}
    f_{\beta^\prime}=X^{\beta^\prime}+\sum_{\gamma^\prime\in D(I)\,,\gamma^\prime<\beta^\prime}
    c_{\gamma^\prime}X^{\gamma^\prime}\,.
  \end{equation*}
  The product $X_{i}f_{\beta^\prime}$ lies in $I$, is monic and has leading exponent $\beta$. 
  Furthermore, the statement of the lemma clearly holds for this product.
  But it may happen that some terms of $X_{i}f_{\beta^\prime}$ with exponents in $C(I)$ do not vanish.
  If so, these exponents lie in the set
  \begin{equation*}
    \Gamma=\{\gamma^\prime+e_{i};\,\gamma^\prime\in D(I),\,\gamma^\prime<\beta^\prime\}\,.
  \end{equation*}
  For getting rid of the corresponding terms in $X_{i}f_{\beta^\prime}$, 
  we first note that for all $\gamma=\gamma^\prime+e_{i}\in\Gamma$, 
  we have $\gamma<\beta^\prime+e_{i}=\beta$. 
  Therefore, by our induction hypothesis, the statement of the lemma is true for all $f_{\gamma}$, 
  where $\gamma\in\Gamma$. Hence the statement of the lemma also holds for the polynomial 
  \begin{equation}\label{corrected}
    X_{i}f_{\beta^\prime}-\sum_{\gamma\in\Gamma}c_{\gamma-e_{i}}f_{\gamma}\,.
  \end{equation}
  Furthermore, this polynomial lies in $I$, is monic, has leading exponent $\beta$, 
  and all its nonleading exponents lie in $D(I)$. 
  Hence this polynomial equals $f_{\beta}$. The induction step is done.
  
  Now we have to prove the first statement of the lemma for all elements of the reduced Gr\"obner basis of $I$. 
  First assume we are given an arbitrary set of generators of $I$, call it $G$, 
  such that the coefficients of all elements of $G$ are $\mathbb{Z}$-rational functions in the parameters 
  $c^{(\ell)}_{b,\beta}$.
  Recall that the reduced Gr\"obner basis of $I$ is computed from $G$ by means of the Buchberger algorithm.
  In very brief terms, the Buchberger algorithm is based on two operations, 
  namely, forming $S$-pairs of elements of $G$, 
  and reducion modulo subsets of $G$. In both operations, $G$ is replaced by a set $G^\prime$, 
  where the coefficients of all elements of $G^\prime$ are 
  $\mathbb{Z}$-rational functions in the coefficients of elements of $G$. 
  For details on the Buchberger algorithm, see Chapter 2 of \cite{cox}.

  Therefore, it remains to show that there exists a system of generators $G$ of the ideal $I$, 
  say, a Gr\"obner basis of $I$, such that the coefficients of all elements of $G$ are 
  $\mathbb{Z}$-rational functions in the parameters $c^{(\ell)}_{b,\beta}$. 
  For this, we now discuss in which way we obtain a Gr\"obner basis of an intersection $I^\prime\cap I^{\prime\prime}$ 
  when given generators of $I^\prime$ and generators of $I^{\prime\prime}$; 
  a Gr\"obner basis of $I=\cap_{\ell=1}^mI^{(\ell)}$ is obtained by this token and induction over $m$.
  We introduce a new variable $T$ over $k$ and extend our term order given on $k[X]$ to $k[T,X]$ 
  by defining that $T$ be larger than any power of $X$. 
  By Theorem 11 in Chapter 4 of \cite{cox}, we have
  \begin{equation*}
    I^\prime\cap I^{\prime\prime}=(TI^\prime+(1-T)I^{\prime\prime})\cap k[X]\,,
  \end{equation*}
  where $(TI^\prime+(1-T)I^{\prime\prime})$ is the ideal generated by all products $Tf$, $f\in I^\prime$, 
  and $(1-T)g$, $g\in I^{\prime\prime}$.
  Furthermore, by the elimination theorem (see Chapter 3 of \cite{cox}), 
  we obtain a Gr\"obner basis of $(TI^\prime+(1-T)I^{\prime\prime})\cap k[X]$ by first computing a Gr\"obner basis
  of $(TI^\prime+(1-T)I^{\prime\prime})\subset k[T,X]$ w.r.t. our extension of the term order $<$
  (which is done by means of the Buchberger algorithm) 
  and subsequently picking those elements of the Gr\"obner basis which lie in $k[X]$.
  Thus, all we need for computing our desired $G$ out of the generators $f^{(\ell)}_{b}$ is the Buchberger algorithm.
  Therefore, we can use the same argument as above once more, and the lemma is proved. 
\end{proof}

Note that the statement of the lemma is not true for arbitrary $f\in I$ (and not even for arbitrary monic $f\in I$, say). 
Indeed, if $c$ is an element of $k$ which is not $\mathbb{Z}$-rational in the various $c^{(\ell)}_{b,\beta}$, 
then $cf_{\beta}$, for $\beta\in C(A)$ 
(or $f_{\beta}+cf_{\beta^\prime}$, for $\beta$ and $\beta^\prime\in C(A)$, where $\beta^\prime<\beta$)
does not have the property stated in the lemma. 

\begin{thm}\label{inherit}
  Let $A$ be a variety as introduced at the beginning of the present section, 
  and let $\delta$ be the generic standard set of Proposition \ref{open}.
  If $<$ is a product order, the set $\mathbb{N}e_{1}\oplus\delta$ is a subset of $D(A)$, 
  and is the largest subset of $D(A)$ which is a union of $1$-planes parallel to $\mathbb{N}e_{1}$.
\end{thm}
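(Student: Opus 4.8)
The plan is to prove the two assertions separately. For the inclusion $\mathbb{N}e_{1}\oplus\delta\subset D(A)$, I would argue by contradiction: suppose some $\beta$ with $p(\beta)\in\delta$ lies in $C(A)$, so there is $g\in I(A)$ with ${\rm LE}(g)=\beta$. Write $g=\phi(X_{1})\overline{X}^{p(\beta)}+h$ as in the proof of Theorem~\ref{stack}, where $h$ collects the terms with strictly smaller $\overline{X}$-monomial; since $<$ is a product order, this decomposition really does isolate the leading behaviour. Specializing, $g_{\lambda}=g(\lambda,\overline{X})\in I(A_{\lambda})$ for every $\lambda$, and if $\phi(\lambda)\neq 0$ then ${\rm LE}(g_{\lambda})=p(\beta)$, forcing $p(\beta)\in C(A_{\lambda})$. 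But $\phi$ has only finitely many zeros, while Proposition~\ref{open} gives an infinite (Zariski open) set $U$ of $\lambda$ with $\delta_{\lambda}=\delta\ni p(\beta)$, i.e. $p(\beta)\in D(A_{\lambda})$. Picking $\lambda\in U$ outside the zero set of $\phi$ yields the contradiction. Since this holds for every $\beta_{1}$, the whole $1$-plane $p(\beta)^{\sharp}+\mathbb{N}e_{1}$ lies in $D(A)$, hence $\mathbb{N}e_{1}\oplus\delta\subset D(A)$.

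For maximality, I must show that no $1$-plane parallel to $\mathbb{N}e_{1}$ lies in $D(A)$ unless its projected base point is in $\delta$; equivalently, if $p(\alpha)\notin\delta$ then for large enough $\alpha_{1}$ we have $(\alpha_{1},p(\alpha))\in C(A)$. Here the key input is Lemma~\ref{rational}: for each $\lambda\in Y=p(A)$ with $p(\alpha)\in C(A_{\lambda})$, the canonical polynomial $f^{\lambda}_{p(\alpha)}\in I(A_{\lambda})$ with leading exponent $p(\alpha)$ and all lower exponents in $D(A_{\lambda})$ has coefficients that are $\mathbb{Z}$-rational functions of the defining parameters $\overline{b}^{(\ell)}_{i,j},\overline{c}^{(\ell)}_{j}$ of the components of $A_{\lambda}$, which in turn are rational functions of $\lambda$ (the components of $A$ vary in a one-parameter linear family). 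Thus the coefficients of $f^{\lambda}_{p(\alpha)}$ are rational functions of $\lambda$ on the open set where $p(\alpha)\in D(A_{\lambda})$, and after clearing denominators one assembles these into a single polynomial $f\in I(A)$ whose fibre over each relevant $\lambda$ is a nonzero multiple of $f^{\lambda}_{p(\alpha)}$; multiplying by $\prod_{\lambda\in Y',\ p(\alpha)\in C(A_{\lambda})}(X_{1}-\lambda)^{N}$ for suitable $N$ to kill the fibres where the construction degenerates, one gets $g\in I(A)$ with ${\rm LE}(g)=(\alpha_{1},p(\alpha))$ for some $\alpha_{1}$. Because $<$ is a product order, this leading-exponent computation goes through exactly as in Theorem~\ref{stack}.

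I expect the maximality half to be the main obstacle. The delicate point is that for $\lambda$ \emph{outside} the open set $U$ of Proposition~\ref{open}, the standard set $\delta_{\lambda}$ can be strictly larger than $\delta$, so $p(\alpha)$ may fail to be in $C(A_{\lambda})$ for exactly those special $\lambda$; one must ensure the polynomial produced over the good $\lambda$ extends to an element of $I(A)$ that still has the right leading exponent after accounting for the bad fibres. Controlling the denominators coming from Lemma~\ref{rational} and the finitely many bad $\lambda$ simultaneously—so that a single $\alpha_{1}$ works—is where the real bookkeeping lies; the product-order hypothesis is what guarantees that multiplying by powers of $(X_{1}-\lambda)$ only shifts the first coordinate of the leading exponent and does nothing else. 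Once $g$ is in hand, the conclusion $(\alpha_{1},p(\alpha))\in C(A)$ is immediate, and since $p(\alpha)\notin\delta$ was arbitrary, $\mathbb{N}e_{1}\oplus\delta$ is indeed the largest union of $\mathbb{N}e_{1}$-parallel $1$-planes inside $D(A)$.
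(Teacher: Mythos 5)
Your proposal is correct and follows essentially the same route as the paper: the inclusion $\mathbb{N}e_{1}\oplus\delta\subset D(A)$ via writing $g=\phi(X_{1})\overline{X}^{p(\beta)}+h$ and specializing at generic $\lambda\in U$ where $\delta=D(A_{\lambda})$, and the maximality via Lemma \ref{rational}, viewing the canonical fibre polynomials $f_{\lambda,p(\alpha)}$ as having coefficients rational in $\lambda$, substituting $X_{1}$ for $\lambda$, clearing denominators, and correcting the finitely many bad fibres with factors $(X_{1}-\lambda)$, with the product order guaranteeing the leading exponent only shifts along $e_{1}$. Apart from minor notational slips (e.g.\ in this section $p(A)=\mathbb{A}^1$ rather than a finite set $Y$), the argument matches the paper's proof of Theorem \ref{inherit}.
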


\begin{proof}
  First we construct, for each $\beta\in\mathbb{N}^{n-1}-\delta$, 
  a polynomial $h_{\beta}\in I(A)$ whose leading exponent lies in $\mathbb{N}e_{1}+\beta$.
  Therefrom will follow that $D(A)$ contains no larger subset containing a $1$-plane parallel to $\mathbb{N}e_{1}$ 
  than $\mathbb{N}e_{1}\oplus\delta$.
  
  The parameters of the irreducible components $A_{\lambda}^{(1)},\ldots,A_{\lambda}^{(m)}$ 
  of the variety $A_{\lambda}$ depend affine-linearly on $\lambda$. 
  More precisely, the equations defining $A_{\lambda}^{(\ell)}$ are
  \begin{equation*}
    X_{i}+\sum_{j\in\overline{J}^{(\ell)},\,j<i}b_{i,j}^{(\ell)}X_{j}+(c_{i}^{(\ell)}+b_{i,1}^{(\ell)}\lambda)=0
    \,\,{\rm for}\,\,{\rm all}\,\,\,i\in\{2,\ldots,n\}-\overline{J}^{(\ell)}\,.
  \end{equation*}
  Therefore, the parameters of $A_{\lambda}^{(\ell)}$ are $b^{(\ell)}_{i,j}$ and $c_{i}^{(\ell)}+b_{i,1}^{(\ell)}\lambda$, 
  for all $i\in\{2,\ldots,n\}-\overline{J}^{(\ell)}$ and $j\in\overline{J}^{(\ell)}$, $j<i$.
  
  Let $U\subset\mathbb{A}^1$ be as in Proposition \ref{open}.
  For all $\lambda\in U$ and all $\beta\in\mathbb{N}^{n-1}-\delta$, 
  denote by $f_{\lambda,\beta}$ the unique element of $I(A_{\lambda})$ introduced in Definition \ref{fbeta}. 
  By Lemma \ref{rational}, the coefficients of $f_{\lambda,\beta}$ are $\mathbb{Z}$-rational
  in the parameters $b^{(\ell)}_{i,j}$ and $c_{i}^{(\ell)}+b_{i,1}^{(\ell)}\lambda$. 
  We replace each $\lambda$ by $X_{1}$ and call the result $f_{X_{1},\gamma}$. 
  This is an element of $k(X_{1})[X_{2},\ldots,X_{n}]$, 
  where the denominators of the coefficients are products of the various $c_{i}^{(\ell)}+b_{i,1}^{(\ell)}X_{1}$. 
  Let us denote by $\overline{<}$ the term order on $k(X_{1})[\overline{X}]$ which is obtained by 
  restricting the term order $<$ on $k[X]$ to $k[\overline{X}]$
  and subsequently extending this term order trivially to $k(X_{1})[\overline{X}]$. 
  It is clear that the leading exponent w.r.t. $\overline{<}$ of $f_{X_{1},\beta}$
  equals ${\rm LE}(f_{\lambda,\beta})=\beta$. 
  
  Next, we clear the denominators of $f_{X_{1},\beta}$ and get an element $g_{\beta}$ of $k[X]$. 
  Clearly, $g_{\beta}$ vanishes on $A\cap(U\times\mathbb{A}^{n-1})$. 
  Further, the hypothesis that $<$ is a product order implies that the leading exponent of 
  $g_{\beta}$ w.r.t. $<$ arises from the leading exponent of $f_{X_{1},\beta}$ 
  w.r.t. $\overline{<}$ by addition of some element of $\mathbb{N}e_{1}$.
  The product
  \begin{equation*}
    h_{\beta}=g_{\beta}\prod_{\lambda\in\mathbb{A}^1-U}(X_{1}-\lambda)
  \end{equation*} 
  vanishes of all of $A$, thus $h_{\beta}\in I(A)$. 
  The leading exponent of $h_{\beta}$ arises from that of $g_{\beta}$ by adding
  $\#(\mathbb{A}^1-U)e_{1}$ to it. Therefore, ${\rm LE}(h_{\beta})\in\mathbb{N}e_{1}+\beta$,
  as required.
  
  It remains to show that $\mathbb{N}e_{1}\oplus\delta$ is contained in $D(A)$. Otherwise, 
  there exists a polynomial $g\in I(A)$ whose leading exponent, 
  call it $\beta$, lies in $\mathbb{N}e_{1}\oplus\delta$. 
  As in the proof of Theorem \ref{stack}, we write $g$ as 
  \begin{equation*}
    g=\phi(X_{1})\overline{X}^{p(\beta)}+h\,,
  \end{equation*}
  where $h\in k[X]$ collects all terms of $g$ in which the powers of $\overline{X}$ 
  are strictly smaller than $\overline{X}^{p(\beta)}$.
  For all $\lambda\in U$, the leading exponent of the polynomial $g_{\lambda}=g(\lambda,\overline{X})$ 
  is either $p(\beta)$ (which is the case if $\phi(\lambda)\neq 0$) 
  or one of the exponents of $\overline{X}$ occurring in $h$ (which is the case if $\phi(\lambda)=0$). 
  Since the polynomial $\phi$ has only finitely many zeros, there exist a Zariski open $U^\prime\subset U$ 
  such that for all $\lambda\in U^\prime$, the leading exponent of $g_{\lambda}$ is $p(\beta)$.
  Hence for all $\lambda\in U^\prime$, 
  we have found a polynomial $g_{\lambda}\in I(A_{\lambda})$ whose leading exponent lies in $\delta$.
  But $\delta=D(A_{\lambda})$ for all $\lambda\in U$, hence $g_{\lambda}=0$ for all $\lambda\in U^\prime$.
  Therefore, for all $\lambda\in U$, $(X_{1}-\lambda)$ divides $g$, hence $g=0$, a contradiction.
\end{proof}

Note that this theorem is a substantial refinement of Theorem \ref{number} for at least two reasons.
Firstly, the cuboid over $\delta$ (the generic $D(A_{\lambda})$) is contained in $D(A)$. 
In particular, $D(A)$ inherits all lower-dimensional artifacts of $D(A_{\lambda})$.
Secondly, the theorem implies that $D(A)$ contains no $d^\prime$-plane parallel to $\oplus_{j\in J}\mathbb{N}e_{j}$, 
where $d^\prime\leq d$ and $1\in J^\prime$.
Thus we now have better knowledge of what is contained in $D(A)$ and what is not.

Let us study an example. As remarked at the end of Section \ref{highest}, 
lower-dimensional artifacts do not occur if either the $A_{\lambda}$ are zero-dimensional
or consisting of hyperplanes of $\{X_{1}=\lambda\}$. Therefore, 
an example for a variety $A$ which inherits of lower-dimensional artifacts from $A_{\lambda}$ 
lives at least in ambient space $\mathbb{A}^4$. 
In particular, such an example is not all too vivid in the visual sense.
We chose to present an example which lives in $\mathbb{A}^3$
and shows the inheritance of the generic $D(A_{\lambda})$, 
but not the inheritance of lower-dimensional artifacts in the generic $D(A_{\lambda})$.

\begin{ex}\label{ex3}
  Take the lexicographic order on $\mathbb{Q}[X,Y,Z]$ with $X<Y<Z$. 
  The variety $A$ has two components $A^{(1)}$ and $A^{(2)}$, 
  given by the Gr\"obner bases of their ideals,
  \begin{equation*}
    \begin{split}
    I(A^{(1)})=&(Y-X,Z-1)\,\\
    I(A^{(2)})=&(Y-2X,Z-2)\,.
    \end{split}
  \end{equation*}
  Figure \ref{ex3im1} shows the components of $A$, lying in the hyperplanes $\{Z=1\}$ and $\{Z=2\}$, resp.
  The minimal free variable of both components is $X$.
  The open set $U$ is $\mathbb{A}^1-\{0\}$, and the generic 
  $D(A_{\lambda})$ is $\delta=\{(0,0),(1,0)\}$. 
  The ideal $I(A)$ has the Gr\"obner basis
  \begin{equation*}
    I(A)=(Y^2-3YX+2X^2,ZX-Y,ZY-3Y+2X,Z^2-3Z+2)\,, 
  \end{equation*}
  hence, $D(A)$ consists of the two $1$-planes $\mathbb{N}e_{1}$ and $(0,1,0)+\mathbb{N}e_{1}$ 
  plus the isolated point $(0,0,1)$.
\end{ex}
  
  \begin{center}
    \begin{figure}
      \begin{picture}(400,160)
         \put(130,60){\line(-3,-2){80}}
         \put(65,5){{\footnotesize $X$}}
         \put(130,60){\line(1,0){160}}
         \put(282,47){{\footnotesize $Y$}}
         \put(130,60){\line(0,1){70}}
         \put(108,125){{\footnotesize $Z$}}
         \put(130,80){\line(1,0){160}}
         \put(130,80){\line(-3,-2){80}}
         \put(290,80){\line(-3,-2){80}}
         \put(50,26.5){\line(1,0){160}}
         \put(130,100){\line(1,0){160}}
         \put(130,100){\line(-3,-2){80}}
         \put(290,100){\line(-3,-2){80}}
         \put(50,46.5){\line(1,0){160}}
         \thicklines
         \put(130,100){\line(3,-1){140}}
         \put(130,100){\line(-3,1){10}}
         \put(130,80){\line(1,-1){70}}
         \put(130,80){\line(-1,1){10}}
      \end{picture}
      \caption{The variety $A$ in Example \ref{ex3}}
      \label{ex3im1}
    \end{figure}
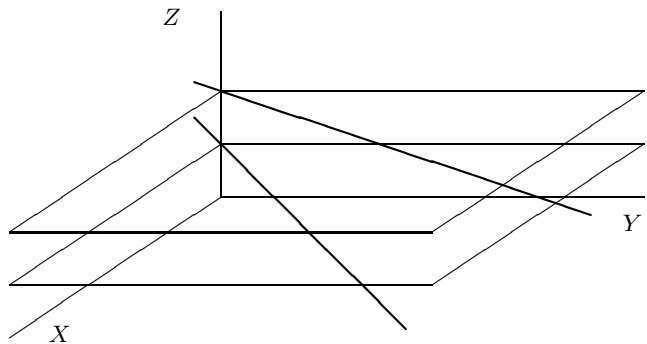
  \end{center}

\begin{center}
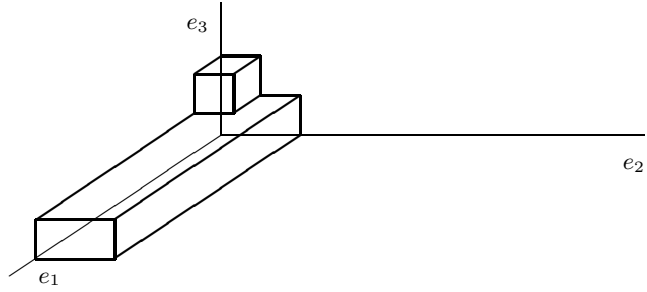
\begin{figure}
  \begin{picture}(400,140)
     \put(130,60){\line(-3,-2){80}}
     \put(61,4){{\footnotesize $e_{1}$}}
     \put(130,60){\line(1,0){160}}
     \put(282,47){{\footnotesize $e_{2}$}}
     \put(130,60){\line(0,1){50}}
     \put(117,100){{\footnotesize $e_{3}$}}
     \thicklines
     \put(145,75){\line(-3,-2){10}}
     \put(120,68){\line(-3,-2){60}}
     \put(90,13){\line(0,1){15}}
     \put(90,13){\line(-1,0){30}}
     \put(60,13){\line(0,1){15}}
     \put(60,28){\line(1,0){30}}
     \put(160,75){\line(-3,-2){70}}
     \put(160,60){\line(-3,-2){70}}
     \put(160,75){\line(0,-1){15}}
     \put(160,75){\line(-1,0){15}}
     \put(135,68){\line(0,1){15}}
     \put(135,68){\line(-1,0){15}}
     \put(145,75){\line(0,1){15}}
     \put(120,68){\line(0,1){15}}
     \put(120,83){\line(1,0){15}}
     \put(120,83){\line(3,2){10}}
     \put(145,90){\line(-1,0){15}}
     \put(145,90){\line(-3,-2){10}}
  \end{picture}
\caption{The standard set of $A$ in Example \ref{ex3}}
\label{ex3im2}
\end{figure}
\end{center}

\section{A subset of the standard set for arbitrary $A$}\label{general}

In this section, we apply the results of the previous two sections
to the study of a variety $A$ whose irreducible components are arbitrary $d$-planes, 
without any restrictions on the respective minimal free variables.

\begin{cor}\label{corgeneral}
  Let $A$ be a variety in $\mathbb{A}^n$ whose irreducible components are $d$-dimensional planes, 
  and let $<$ be a product order on $k[X]$. 
  Then there exists a $\delta\in\mathbb{D}_{n-1}$ such that the set 
  $\{\lambda\in\mathbb{A}^1;D(A_{\lambda})=\delta\}$ is dense in $\mathbb{A}^1$. 
  Let $U\subset\mathbb{A}^1$ be maximal with this property, and set $Y=\mathbb{A}^1-U$. Then 
  \begin{equation*}
    D(A)\supset(\mathbb{N}e_{1}\oplus\delta)\cup(\sum_{\lambda\in Y}D(A_{\lambda}))\,,
  \end{equation*}
  and $\mathbb{N}e_{1}\oplus\delta$ 
  is the largest subset of $D(A)$ which is a union of $1$-planes parallel to $\mathbb{N}e_{1}$.
\end{cor}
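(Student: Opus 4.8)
The plan is to decompose $A$ according to the minimal free variables of its components and to separate the components into two groups: those whose minimal free variable set contains $1$, and those whose does not. Write $A=A'\cup A''$, where $A'$ collects all components $A^{(\ell)}$ with $1\in J^{(\ell)}$, and $A''$ collects all components with $1\notin J^{(\ell)}$. Then $I(A)=I(A')\cap I(A'')$, hence $C(A)=C(A')\cap C(A'')$ and $D(A)=D(A')\cup D(A'')$. The variety $A''$ is exactly the situation of Section \ref{interpolation}: all its components are parallel to $\{X_1=0\}$, so by Theorem \ref{stack} (using that $<$ is a product order) we get $D(A'')=\sum_{\lambda\in Y''}D(A''_\lambda)$, where $Y''=p(A'')$ is a finite set and $A''_\lambda=A''\cap\{X_1=\lambda\}$. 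The variety $A'$ is exactly the situation of Section \ref{families}: all its components have $1\in J^{(\ell)}$, so by Proposition \ref{open} there is a dense open $U'\subset\mathbb{A}^1$ on which $D(A'_\lambda)$ takes a constant value $\delta'$, and by Theorem \ref{inherit} the cuboid $\mathbb{N}e_1\oplus\delta'$ is contained in $D(A')$ and is the largest subset of $D(A')$ that is a union of $1$-planes parallel to $\mathbb{N}e_1$.

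The next step is to relate $\delta'$ to the generic $D(A_\lambda)$ for the full variety $A$. For $\lambda$ outside the finite set $Y''$, we have $A_\lambda=A'_\lambda\cup A''_\lambda=A'_\lambda$, so $D(A_\lambda)=D(A'_\lambda)$; intersecting $U'$ with $\mathbb{A}^1-Y''$ we obtain a dense open set on which $D(A_\lambda)$ equals $\delta'$. This establishes the existence of $\delta$ (namely $\delta=\delta'$) with $\{\lambda;D(A_\lambda)=\delta\}$ dense, and lets us take $U$ maximal with $D(A_\lambda)=\delta$ for all $\lambda\in U$, $Y=\mathbb{A}^1-U$. Since $Y''\subset Y$ and for $\lambda\in Y''$ we have $A_\lambda=A''_\lambda$ (because components with $1\in J^{(\ell)}$ meet $\{X_1=\lambda\}$ in an affine $(d-1)$-plane for \emph{every} $\lambda$, so they do not disappear — wait: I need to be careful here), I should instead argue directly: for $\lambda\notin Y''$, $D(A_\lambda)=D(A'_\lambda)$, and for the finitely many $\lambda\in Y$ we only need the containment $D(A)\supset\sum_{\lambda\in Y}D(A_\lambda)$, which I obtain by a stacking argument analogous to the proof of Theorem \ref{stack} applied to $A$ itself along $X_1$: for any $\alpha\in\mathbb{N}^{n-1}$, the minimal $\alpha_1$ with $(\alpha_1,\alpha)\notin\sum_{\lambda\in Y}D(A_\lambda)$ can be realized as the first coordinate of a leading exponent of an element of $I(A)$, by interpolating the canonical polynomials $f_{\lambda,\alpha}\in I(A_\lambda)$ over $\lambda\in Y$ with $\alpha\in C(A_\lambda)$ and multiplying by $\prod_{\lambda\in Y,\,\alpha\in D(A_\lambda)}(X_1-\lambda)$, exactly as in Theorem \ref{stack}. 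This gives $D(A)\supset\sum_{\lambda\in Y}D(A_\lambda)$.

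For the containment $\mathbb{N}e_1\oplus\delta\subset D(A)$ I can invoke Theorem \ref{inherit} applied to $A'$, together with $D(A)\supset D(A')$ and the equality $\delta=\delta'$. Alternatively, and more robustly, I can rerun the argument in the proof of Theorem \ref{inherit} verbatim for $A$ in place of $A'$: the polynomials $f_{\lambda,\beta}\in I(A_\lambda)$ for $\lambda\in U$ have coefficients that are $\mathbb{Z}$-rational in $\lambda$ by Lemma \ref{rational} (applied to the finitely many components $A^{(\ell)}_\lambda$, noting that those with $1\in J^{(\ell)}$ contribute coefficients affine-linear in $\lambda$ and those with $1\notin J^{(\ell)}$ simply vanish for $\lambda\notin p(A^{(\ell)})$, hence can be dropped after shrinking $U$), so substituting $X_1$ for $\lambda$, clearing denominators, and multiplying by $\prod_{\lambda\in Y}(X_1-\lambda)$ produces $h_\beta\in I(A)$ with ${\rm LE}(h_\beta)\in\mathbb{N}e_1+\beta$ for every $\beta\in\mathbb{N}^{n-1}-\delta$; and conversely if some $\beta\in\mathbb{N}e_1\oplus\delta$ were in $C(A)$, restricting a witness $g\in I(A)$ to $\lambda\in U'$ for a suitable dense open $U'\subset U$ forces $g_\lambda=0$ on $U'$, hence $(X_1-\lambda)\mid g$ for all $\lambda\in U$, hence $g=0$. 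This simultaneously shows $\mathbb{N}e_1\oplus\delta\subset D(A)$ and that no larger union of $1$-planes parallel to $\mathbb{N}e_1$ lies in $D(A)$.

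The main obstacle I anticipate is bookkeeping around the finitely many bad $\lambda\in Y$: I must check that $\sum_{\lambda\in Y}D(A_\lambda)$ is well-defined (each $D(A_\lambda)\in\overline{\mathbb{D}}_{n}$ after the standard embedding, which needs $A_\lambda$ to have no component parallel to $\mathbb{N}e_1$ — true since $A_\lambda\subset\{X_1=\lambda\}$), and that the interpolation/division argument of Theorem \ref{stack} goes through over a finite set $Y$ that is \emph{not} all of $p(A)$, i.e. that for $\lambda\in Y\setminus p(A)$ one has $A_\lambda=\emptyset$ so $D(A_\lambda)=\emptyset$ contributes nothing. Everything else is a direct assembly of Theorems \ref{stack} and \ref{inherit} via $D(A)=D(A')\cup D(A'')$, and the only genuinely new point is verifying that the generic fibre of $A$ and the generic fibre of $A'$ have the same standard set, which is immediate once one notes that $A_\lambda=A'_\lambda$ for $\lambda\notin p(A'')$.
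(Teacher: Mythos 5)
The genuine gap is your treatment of the inclusion $\sum_{\lambda\in Y}D(A_{\lambda})\subset D(A)$. The argument you sketch for it runs in the wrong logical direction: interpolating the canonical polynomials $f_{\lambda,\alpha}\in I(A_{\lambda})$ over $\lambda\in Y$ and multiplying by $\prod_{\lambda\in Y,\,\alpha\in D(A_{\lambda})}(X_{1}-\lambda)$ is the construction used in Theorem \ref{stack} to exhibit elements of $C(A)$, i.e.\ it serves the \emph{other} inclusion $D(A)\subset\sum_{\lambda}D(A_{\lambda})$; exhibiting particular elements of $I(A)$ with leading exponents outside the sum can never show that the sum is contained in $D(A)$ --- for that you must show that \emph{no} element of $I(A)$ has its leading exponent inside the sum. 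Moreover, in the setting of the corollary the construction does not even produce elements of $I(A)$: the interpolated polynomial vanishes only on $A\cap(Y\times\mathbb{A}^{n-1})$, and unlike in Theorem \ref{stack}, where $A=\cup_{\lambda\in Y}A_{\lambda}$, here $A$ has components lying over all of $\mathbb{A}^{1}$ (those with $1\in J^{(\ell)}$), on which it has no reason to vanish. (It cannot lie in $I(A)$ with the asserted leading exponents anyway: that would make the column of $D(A)$ over any $\alpha\in\delta$ finite, contradicting $\mathbb{N}e_{1}\oplus\delta\subset D(A)$.) The missing idea is a degree count valid for \emph{every} $g\in I(A)$: write $g=\phi(X_{1})\overline{X}^{p(\beta)}+h$ with $\beta={\rm LE}(g)$; the product-order hypothesis gives $\beta_{1}=\deg\phi$, and $\phi(\lambda)=0$ for every $\lambda$ with $p(\beta)\in D(A_{\lambda})$, since otherwise $g_{\lambda}\in I(A_{\lambda})$ would have leading exponent in $D(A_{\lambda})$; counting zeros of $\phi$ yields $\beta_{1}\geq\sum_{\lambda\in\mathbb{A}^{1}}\#\bigl(p^{-1}(p(\beta))\cap D(A_{\lambda})\bigr)$, hence $\beta\notin\sum_{\lambda\in Y}D(A_{\lambda})$. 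Without this (or some equivalent) argument, the stated containment is not established.

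A secondary flaw: the opening claim $C(A)=C(A')\cap C(A'')$, i.e.\ $D(A)=D(A')\cup D(A'')$, is false in general; only $D(A)\supset D(A')\cup D(A'')$ holds, and strictness is exactly the lower-dimensional-artifact phenomenon discussed after Lemma \ref{decompose} and visible in Example \ref{ex1}. This matters for the maximality statement: Theorem \ref{inherit} applied to $A'$ constrains $C(A')$, and since $C(A')\supset C(A)$ this does not prevent $D(A)$ from containing a $1$-plane parallel to $\mathbb{N}e_{1}$ outside $\mathbb{N}e_{1}\oplus\delta$. Your alternative ``robust'' route --- rerunning the proof of Theorem \ref{inherit} for $A$ itself, applying Lemma \ref{rational} to the fibres $A_{\lambda}$ for $\lambda$ in a cofinite subset of $U$ avoiding $p(A'')$, and multiplying by $\prod_{\lambda}(X_{1}-\lambda)$ over the finitely many excluded $\lambda$ so that the vertical components are killed --- is sound, and is essentially the paper's argument for that half of the corollary; that route, not the union formula, should carry both the cuboid containment and the maximality claim. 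Your identification of the generic $\delta$ via $A_{\lambda}=A'_{\lambda}$ for $\lambda\notin p(A'')$ is fine.
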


\begin{proof}
  The existence of $\delta$ and the inclusion $C(A)\subset\mathbb{N}^n-\mathbb{N}e_{1}\oplus\delta$ 
  follow by the same arguments as in the last section, in particular, the proof of Theorem \ref{inherit}.
  We define $U$ and $Y$ as in the corollary and show the inclusion
  $C(A)\subset\mathbb{N}^n-\sum_{\lambda\in Y}D(A_{\lambda})$.
  For this, we take an arbitrary $\beta\in C(A)$. We have to show that 
  \begin{equation*}
    \beta_{1}\geq\sum_{\lambda\in Y}\#p^{-1}(p(\beta))\cap D(A_{\lambda})\,.
  \end{equation*}
  In fact, we will show the following assertion, which is even stronger,
  \begin{equation}\label{strong}
    \beta_{1}\geq\sum_{\lambda\in\mathbb{A}^1}\#p^{-1}(p(\beta))\cap D(A_{\lambda})\,.
  \end{equation}
  
  There exists a $g\in I(A)$ with leading exponent $\beta$. 
  As in the proofs of Theorems \ref{stack} and \ref{inherit}, we write $g$ as 
  \begin{equation*}
    g=\phi(X_{1})\overline{X}^{p(\beta)}+h\,,
  \end{equation*}
  where $h\in k[X]$ collects all terms of $g$ in which the powers of $\overline{X}$ 
  are strictly smaller than $\overline{X}^{p(\beta)}$.
  In particular, $\beta_{1}$ equals the degree of the univariate polynomial $\phi$.
  For all $\lambda\in\mathbb{A}^1$, the leading exponent of the polynomial $g_{\lambda}=g(\lambda,\overline{X})$ 
  is either $p(\beta)$ (which is the case if $\phi(\lambda)\neq 0$) 
  or one of the exponents of $\overline{X}$ occurring in $h$ (which is the case if $\phi(\lambda)=0$). 
  We define
  \begin{equation*}
    Y^\prime=\{\lambda\in\mathbb{A}^1;p(\beta)\in D(A_{\lambda})\}\,.
  \end{equation*}
  Since for all $\lambda\in\mathbb{A}^1$, 
  the polynomial $g_{\lambda}=g(\lambda,\overline{X})$ lies in $I(A_{\lambda})$, 
  it follows that for all $\lambda\in Y^\prime$, we have $\phi(\lambda)=0$. 
  Since the number of zeros of the univariate polynomial $\phi$ is bounded by its degree, 
  which equals $\beta_{1}$, this implies that 
  \begin{equation}\label{inequality}
    \beta_{1}\geq\#Y^\prime\,. 
  \end{equation}
  By definition of $Y^\prime$, for all $\lambda$ in the complement of $Y^\prime$ in $\mathbb{A}^1$, 
  we have $p^{-1}(p(\beta))\cap D(A_{\lambda})=\emptyset$. Therefore, 
  from inequality (\ref{inequality}), the desired inequality (\ref{strong}) follows. 
  
  Finally, the fact that $\mathbb{N}e_{1}\oplus\delta$ is the largest subset of $D(A)$ which is a union of
  $1$-planes parallel to $\mathbb{N}e_{1}$ follows analogously as in the proof of Theorem \ref{inherit}.
\end{proof}

In fact, inequality (\ref{strong}) not only implies the inclusion $C(A)\subset\mathbb{N}^n-\sum_{\lambda\in Y}D(A_{\lambda})$,
which we just proved, but also the inclusion $C(A)\subset\mathbb{N}^n-\mathbb{N}e_{1}\oplus\delta$. 
Indeed, assume that $\beta$ lies in $\mathbb{N}e_{1}\oplus\delta$; 
the open set $U\subset\mathbb{A}^1$ contains infinitely many closed points $\lambda$, 
hence by (\ref{strong}) and a token using $\phi$ similarly as before, $\beta_{1}$ were to be infinitely large, a contradiction.

Here is a last example, in which the minimal free variables of the respective components of $A$ take all possible values.

\begin{ex}\label{ex4}
  We take the lexicographic order with $X<Y<Z$ on $\mathbb{Q}[X,Y,Z]$.
  The variety $A$ has five components, given by the Gr\"obner bases of the ideals,
  \begin{equation*}
    \begin{split}
    I(A^{(1)})=&(Y-X,Z-1)\,,\,\,I(A^{(4)})=(X-1,Z-3)\,,\\
    I(A^{(2)})=&(Y-X,Z-2)\,,\,\,I(A^{(5)})=(X-3,Y-4)\,.\\
    I(A^{(3)})=&(X-2,Z-Y+1)\,,
    \end{split}
  \end{equation*}
  Figure \ref{ex4im1} shows the components of $A$, along with some obvious hyperplanes in which they lie.
  The minimal free variable of $A^{(1)}$ and $A^{(2)}$ is $X$, 
  the minimal free variable of $A^{(3)}$ and $A^{(4)}$ is $Y$, 
  and the minimal free variable of $A^{(5)}$ is $Z$. 
  The open set $U$ is $\mathbb{A}^1-\{1,2,3\}$ and the generic $D(A_{\lambda})$ is $\delta=\{(0,0),(0,1)\}$. 
  The three exceptional $D(A_{\lambda})$ are 
  $D(A_{1})=\mathbb{N}e_{2}\cup\{(0,1),(0,2)\}$,
  $D(A_{2})=\mathbb{N}e_{2}\cup\{(0,1)\}$,
  $D(A_{3})=\mathbb{N}e_{3}\cup\{(1,0),(2,0)\}$
  (Note that here we are denoting the coordinate axes in $\mathbb{N}^2$ by $\mathbb{N}e_{2}$ and $\mathbb{N}e_{3}$,
  since we understand $\mathbb{N}^2$ to be the hyperplane $\{\alpha_{1}=0\}$ 
  of the ambient space $\mathbb{N}^3$ we are working in.)
  The variety $A$ is given by its Gr\"obner basis,
  \begin{equation*}
    \begin{split}
    I(A)=&(YX^3-6YX^2+11YX-6Y-X4+6X^3-11X^2+6X\,,\\
    &Y^2X^2-3Y^2X+2Y^2-YX^3-YX^2+10YX-8Y+4X^3\\
    &-12X^2+8X\,,\\
    &ZYX-3ZY-ZX^2+3ZX+Y^2X-Y^2-YX^2-6YX+13Y\\
    &+7X^2-13X\,,\\
    &ZY^2-ZYX-4ZY+4ZX-Y^3X+Y^3+Y^2X^2+7Y^2X-11Y^2\\
    &-8YX^2-5YX+28Y+16X^2-28X\,,\\
    &Z^2X^2-4Z^2X+3Z^2+ZY^2X^3-4ZY^2X^2+5ZY^2X\\
    &-2ZY^2-ZYX^4-ZYX^3+18ZYX^2-33ZYX+17ZY+5ZX^4\\
    &-23ZX^3+32ZX^2-5ZX-9Z-2Y^2X^3+10Y^2X^2-14Y^2X+6Y^2\\
    &+2YX^4-42YX^2+88YX-48Y-10X^4+56X^3-92X^2+40X+6\,,\\
    &2Z^2Y-3Z^2X+Z^2-ZY^2X^2+ZY^2X+ZYX^3+4ZYX^2\\
    &-8ZYX-3ZY-5ZX^3+8ZX^2+6ZX-3Z+2Y^2X^2-4Y^2X\\
    &+2Y^2-2YX^3-6YX^2+24YX-16Y+10X^3-26X^2+14X+2\,,\\
    &Z^3X-3Z^3+Z^2Y^2X^2-3Z^2Y^2X+2Z^2Y^2-Z^2YX^3\\
    &-2Z^2YX^2+16Z^2YX-23Z^2Y+5Z^2X^3-18Z^2X^2\\
    &+20Z^2X+15Z^2-2ZY^2X^2+14ZY^2X-12ZY^2+2ZYX^3\\
    &-4ZYX^2-64ZYX+108ZY-10ZX^3+76ZX^2-106ZX-24Z\\
    &-12Y^2X+12Y^2+12YX^2+48YX-96Y-60X^2+96X+12)\,,
    \end{split}
  \end{equation*}
  yielding the standard set $D(A)$, depicted in Figure \ref{ex4im2}.
\end{ex}

  \begin{center}
    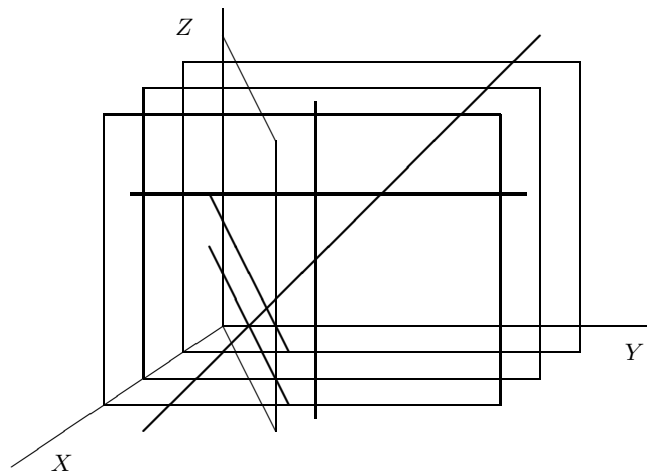
\begin{figure}
      \begin{picture}(400,210)
         \put(130,60){\line(-3,-2){80}}
         \put(65,5){{\footnotesize $X$}}
         \put(130,60){\line(1,0){160}}
         \put(282,47){{\footnotesize $Y$}}
         \put(130,60){\line(0,1){120}}
         \put(112,170){{\footnotesize $Z$}}
         \put(115,50){\line(1,0){150}}
         \put(115,50){\line(0,1){110}}
         \put(265,50){\line(0,1){110}}
         \put(115,160){\line(1,0){150}}
         \put(100,40){\line(1,0){150}}
         \put(100,40){\line(0,1){110}}
         \put(250,40){\line(0,1){110}}
         \put(100,150){\line(1,0){150}}
         \put(85,30){\line(1,0){150}}
         \put(85,30){\line(0,1){110}}
         \put(235,30){\line(0,1){110}}
         \put(85,140){\line(1,0){150}}
         \put(130,60){\line(1,-2){20}}
         \put(130,170){\line(1,-2){20}}
         \put(150,20){\line(0,1){110}}
         \thicklines
         \put(165,30){\line(0,1){115}}
         \put(165,30){\line(0,-11){5}}
         \put(120,40){\line(1,1){130}}
         \put(120,40){\line(-1,-1){20}}
         \put(115,110){\line(1,0){130}}
         \put(115,110){\line(-1,0){20}}
         \put(130,80){\line(1,-2){25}}
         \put(130,80){\line(-1,2){5}}
         \put(130,100){\line(1,-2){25}}
         \put(130,100){\line(-1,2){5}}
      \end{picture}
      \caption{The variety $A$ in Example \ref{ex4}}
      \label{ex4im1}
    \end{figure}
  \end{center}

\begin{center}
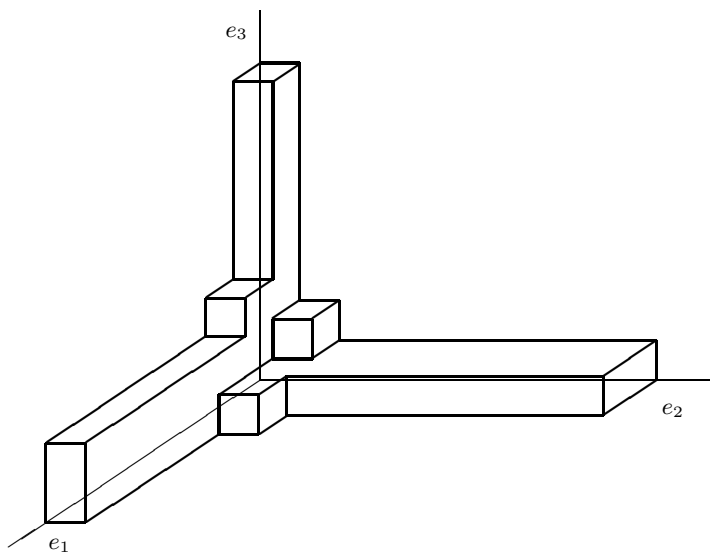
\begin{figure}
  \begin{picture}(400,240)
     \put(130,70){\line(-3,-2){95}}
     \put(50,6){{\footnotesize $e_{1}$}}
     \put(130,70){\line(1,0){170}}
     \put(282,57){{\footnotesize $e_{2}$}}
     \put(130,70){\line(0,1){140}}
     \put(117,200){{\footnotesize $e_{3}$}}
     \thicklines
     \put(160,85){\line(1,0){120}}
     \put(280,85){\line(0,-1){15}}
     \put(280,85){\line(-3,-2){20}}
     \put(280,70){\line(-3,-2){20}}
     \put(140,71.5){\line(1,0){120}}
     \put(140,56.5){\line(1,0){120}}
     \put(260,71.5){\line(0,-1){15}}
     \put(160,85){\line(-3,-2){10}}
     \put(160,85){\line(0,1){15}}
     \put(160,100){\line(-1,0){15}}
     \put(160,100){\line(-3,-2){10}}
     \put(145,100){\line(-3,-2){10}}
     \put(135,93){\line(1,0){15}}
     \put(135,93){\line(0,-1){15}}
     \put(150,78){\line(0,1){15}}
     \put(150,78){\line(-1,0){15}}
     \put(140,71.5){\line(0,-1){15}}
     \put(140,71.5){\line(-3,-2){10}}
     \put(140,56.5){\line(-3,-2){10}}
     \put(114.5,49.5){\line(1,0){15}}
     \put(129.5,49.5){\line(0,1){15}}
     \put(129.5,64.5){\line(-1,0){15}}
     \put(114.5,64.5){\line(0,-1){15}}
     \put(114.5,64.5){\line(3,2){20}}
     \put(145,100){\line(0,1){90}}
     \put(135,108){\line(0,1){75}}
     \put(120,108){\line(0,1){75}}
     \put(135,183){\line(-1,0){15}}
     \put(135,183){\line(3,2){10}}
     \put(145,190){\line(-1,0){15}}
     \put(130,190){\line(-3,-2){10}}
     \put(135,108){\line(-3,-2){10}}
     \put(135,108){\line(-1,0){15}}
     \put(120,108){\line(-3,-2){10}}
     \put(109.5,101){\line(1,0){15}}
     \put(109.5,101){\line(0,-1){14.5}}
     \put(109.5,86.5){\line(1,0){15}}
     \put(124.5,101){\line(0,-1){14.5}}
     \put(124.5,86.5){\line(-3,-2){60}}
     \put(109.5,86.5){\line(-3,-2){60}}
     \put(114.5,49.5){\line(-3,-2){50}}
     \put(49,16){\line(1,0){15}}
     \put(49,46){\line(1,0){15}}
     \put(49,16){\line(0,1){30}}
     \put(64,16){\line(0,1){30}}
  \end{picture}
\caption{The standard set of $A$ in Example \ref{ex4}}
\label{ex4im2}
\end{figure}
\end{center}

\section{Final remarks}

Interestingly, our arguments depend on $<$ to be a product order.
Philosophically, this property reflects the product decomposition $\mathbb{A}^n=\mathbb{A}^1\prod\mathbb{A}^{n-1}$. 
In all our arguments, we made use of the product decomposition of affine space, 
which explains the necessity of using a product order.

Let me make some comments on the literature concerning standard sets of finite sets in Grassmannians.
The existing approaches stress the computational aspects, 
in giving algorithms for the construction of the Gr\"obner basis of $I(A)$. 

The first article is \cite{buchmoeller}, in which an algorithm for the construction of a Gr\"obner basis of $I$ 
is constructed, where $I$ defines a finite set of closed $k$-rational points in $\mathbb{A}^n$.
(This is the case $d=0$ in our terminology.)
The idea of Buchberger--M\"oller algorithm is to successively go through the elements of $A$. 
In this iteration, one uses a control variable $\delta\in\mathbb{D}_{n}$, 
which has the property that that in each step, one has $\delta\subset D(A)$.
The stopping criterion for the Buchberger--M\"oller algorithm is that $\#A=\delta$ (hence $\delta=D(A)$).

After the appearance of the original article, 
a number of generalisations of the Buchberger--M\"oller algorithm have been published 
(see \cite{mmm}, \cite{abkr}, \cite{akr}, or the survey articles \cite{big1} and \cite{big2} and references therein).
The authors consider a $k[X]$-module $M$ and a homomorphism of $k[X]$-modules $\phi:k[X]\to M$, 
and compute the Gr\"obner basis of $\ker\phi$. 
But in fact, complete algorithms for computing the Gr\"obner basis of $I$ (and hence also of the standard set $D(I)$) 
are presented only for the cases where $\ker\phi$ defines a zero-dimensional scheme
lying either in $\mathbb{A}^n$ or in $\mathbb{P}^n$. 
Hence in our terminology, the literature covers the objects of $\mathcal{A}(0,n)$ and $\mathcal{L}(0,n)$, 
but also analogous objects with ``fat points'', i.e. points whose local ideals are powers of the associated maximal ideal.
Note that in the case where projective points are considered, 
the stopping criterion for the Buchberger--M\"oller algorithm has to be modified, 
since $\#A$ is not finite any more. The modified stopping criterion uses the Hilbert function,
in an similar way as we used it in Section \ref{highest}. 
Also the trick of intersecting with the hyperplane $\{X_{1}=1\}$ for passing from projective to affine points is being used.
See \cite{abkr} and \cite{akr} for details on the stopping criterion and the projective-to-affine trick.
A version of the interpolation technique of Section \ref{interpolation} already appear in the author's paper \cite{jpaa}. 

\section{Acknoledgements}

I wish to thank the anonymous referee of \cite{jpaa} for his positive evaluation of my article, 
which motivated me to carry on my research on this subject.
Many thanks go B. Heinrich Matzat and his group in Heidelberg, in particular Michael Wibmer, 
who gave me the opportunity to present my research in a seminar talk.
I am greatly indebted to the creators of Singular \cite{singular},
which system helped me to develop a geometric intuition for the ideas of the present research;
and to the anonymous referee of the present paper,
with the caring help of whom I was able to give it a much more concise form,
and who pointed at a large number of flaws and errors in my original manuscript. 
Many thanks indeed for sharing interest in my research, 
even though I had not presented it in an optimal way, and for the elegant arguments for proving Lemma \ref{rational}.
Special thanks go to Alexander Salle and Arne R\"uffer, 
who helped me out in a situation where I was urgently missing \cite{cox}. 

\bibliography{grass.bib}

\providecommand{\bysame}{\leavevmode\hbox to3em{\hrulefill}\thinspace}
\providecommand{\MR}{\relax\ifhmode\unskip\space\fi MR }
\providecommand{\MRhref}[2]{%
  \href{http://www.ams.org/mathscinet-getitem?mr=#1}{#2}
}
\providecommand{\href}[2]{#2}
\begin{thebibliography}{ABKR00}

\bibitem[ABKR00]{abkr}
J.~Abbott, A.~Bigatti, M.~Kreuzer, and L.~Robbiano, \emph{Computing ideals of
  points}, J. Symbolic Comput. \textbf{30} (2000), no.~4, 341--356.
  \MR{MR1784266 (2001j:13026)}

\bibitem[AKR05]{akr}
J.~Abbott, M.~Kreuzer, and L.~Robbiano, \emph{Computing zero-dimensional
  schemes}, J. Symbolic Comput. \textbf{39} (2005), no.~1, 31--49.
  \MR{MR2168239 (2006g:13050)}

\bibitem[AMM03]{big1}
Mar{\'{\i}}a~Emilia Alonso, Maria~Grazia Marinari, and Teo Mora, \emph{The big
  mother of all dualities: {M}\"oller algorithm}, Comm. Algebra \textbf{31}
  (2003), no.~2, 783--818. \MR{MR1968924 (2004b:13029)}

\bibitem[AMM06]{big2}
\bysame, \emph{The big mother of all dualities. {II}. {M}acaulay bases}, Appl.
  Algebra Engrg. Comm. Comput. \textbf{17} (2006), no.~6, 409--451.
  \MR{MR2270332 (2008d:13036)}

\bibitem[CLO05]{cox2}
David~A. Cox, John Little, and Donal O'Shea, \emph{Using algebraic geometry},
  second ed., Graduate Texts in Mathematics, vol. 185, Springer, New York,
  2005. \MR{MR2122859 (2005i:13037)}

\bibitem[CLO07]{cox}
David Cox, John Little, and Donal O'Shea, \emph{Ideals, varieties, and
  algorithms}, third ed., Undergraduate Texts in Mathematics, Springer, New
  York, 2007, An introduction to computational algebraic geometry and
  commutative algebra. \MR{MR2290010 (2007h:13036)}

\bibitem[GH78]{griff}
Phillip Griffiths and Joseph Harris, \emph{Principles of algebraic geometry},
  Wiley-Interscience [John Wiley \& Sons], New York, 1978, Pure and Applied
  Mathematics. \MR{MR507725 (80b:14001)}

\bibitem[GPS05]{singular}
G.-M. Greuel, G.~Pfister, and H.~Sch\"onemann, \emph{{\sc Singular} 3.0}, {A
  Computer Algebra System for Polynomial Computations}, Centre for Computer
  Algebra, University of Kaiserslautern, 2005, {\tt
  http://www.singular.uni-kl.de}.

\bibitem[HP94]{hodge}
W.~V.~D. Hodge and D.~Pedoe, \emph{Methods of algebraic geometry. {V}ol.
  {III}}, Cambridge Mathematical Library, Cambridge University Press,
  Cambridge, 1994, Book V: Birational geometry, Reprint of the 1954 original.
  \MR{MR1288307 (95d:14002c)}

\bibitem[Laf03]{lafforgue}
L.~Lafforgue, \emph{Chirurgie des grassmanniennes}, CRM Monograph Series,
  vol.~19, American Mathematical Society, Providence, RI, 2003. \MR{MR1976905
  (2004k:14085)}

\bibitem[Led08]{jpaa}
M.~Lederer, \emph{The vanishing ideal of a finite set of closed points in
  affine space}, J. Pure Appl. Algebra \textbf{212} (2008), 1116--1133.

\bibitem[MB82]{buchmoeller}
H.~M. M{\"o}ller and B.~Buchberger, \emph{The construction of multivariate
  polynomials with preassigned zeros}, Computer algebra (Marseille, 1982),
  Lecture Notes in Comput. Sci., vol. 144, Springer, Berlin, 1982, pp.~24--31.
  \MR{MR680050 (84b:12003)}

\bibitem[MMM93]{mmm}
M.~G. Marinari, H.~M. M{\"o}ller, and T.~Mora, \emph{Gr\"obner bases of ideals
  defined by functionals with an application to ideals of projective points},
  Appl. Algebra Engrg. Comm. Comput. \textbf{4} (1993), no.~2, 103--145.
  \MR{MR1223853 (94g:13019)}

\bibitem[Stu96]{sturm}
Bernd Sturmfels, \emph{Gr\"obner bases and convex polytopes}, University
  Lecture Series, vol.~8, American Mathematical Society, Providence, RI, 1996.
  \MR{MR1363949 (97b:13034)}

\bibitem[Wib07]{wibmer}
Michael Wibmer, \emph{Gr\"obner bases for families of affine or projective
  schemes}, J. Symbolic Comput. \textbf{42} (2007), no.~8, 803--834.
  \MR{MR2345838}

\end{thebibliography}
\bibliographystyle{amsalpha}

\end{document}